\newtheorem{theorem}{Theorem}[section]
\newtheorem{lemma}[theorem]{Lemma}
\newtheorem{proposition}[theorem]{Proposition}
\newtheorem{corollary}[theorem]{Corollary}
\theoremstyle{definition}
\theoremstyle{remark}
\newtheorem{remark}[theorem]{Remark}
\numberwithin{equation}{section}
\def\NN{{\bf N}}
\def\dist{{\rm dist}\,}
\def\codim{{\rm codim}\,}
\def\supp{{\rm supp}\,}
\def\conv{{\rm conv}\,}
\def\Lip{{\rm Lip}\,}
\def\Re{{\rm Re}\,}
\def\Im{{\rm Im}\,}
\def\Int{{\rm Int}\,}
\def\e{\varepsilon}
\def\usim{\smash{\mathop{\sim}\limits^u}}
\begin{document}
\title[Compressions and diagonals of operator functions]
{On regularity of compressions and diagonals of operator functions}

\author{Vladimir M\"uller}
\address{Institute of Mathematics,\\
Czech Academy of Sciences,\\
 \v Zitna Str. 25, 115 67 Prague,\\
 Czech Republic}

\email{muller@math.cas.cz}

\author{Yuri Tomilov}
\address{
Institute of Mathematics\\
Polish Academy of Sciences\\
\'Sniadeckich 8\\
00-656 Warsaw, Poland \\
and \\
Faculty of Mathematics and Computer Science\\
Nicolaus Copernicus University\\
Chopin Street 12/18\\
87-100 Toru\'n, Poland
}
\email{ytomilov@impan.pl}

\subjclass[2020]{Primary 47B02, 47A67, 47A08; Secondary  47A12, 47A13}

\thanks{
The research was supported by GA \v CR/NCN grant 25-15444K.  The first author was supported by the Czech Academy of Sciences (RVO:67985840). The second author was supported by the NCN grant 2024/06/Y/ST1/00044.
The second author  was also partially supported by the NCN grant UMO-2023/49/B/ST1/01961 and the NAWA/NSF
grant BPN/NSF/2023/1/00001.}

\keywords{compressions, diagonals, essential numerical range, modulus of continuity}

\begin{abstract}
Replacing operators with
continuous operator-valued functions, we prove time-dependent versions
of well-known results on compressions and diagonals
of bounded operators.
The setting of smooth functions is also addressed.
Our results have no analogues in the literature and
rely on a new technique.
The results are especially transparent for selfadjoint operators.
\end{abstract}

\maketitle
\section{Introduction}

To introduce the objects of our study,
let $L(H)$ be the space of bounded linear operators on a separable Hilbert space $H,$ and $A \in L(H).$
If $(e_k)_{k=1}^N$ is an orthonormal basis in $H, 1 \le N\le \infty,$ then $A$ admits a matrix
representation $M_A=(\langle A e_i, e_j\rangle)_{i,j=1}^N.$
If $N<\infty$ then $M_A$ is a finite matrix, and, given $A,$ it is
a central problem  of matrix theory to
 find $M_A$ most suitable for revealing the structure of $A$
and convenient for dealing with specific tasks depending
on $M_A.$
While this topic is classical, there are still many problems in understanding $M_A$
e.g. the existence of prescribed zero patterns in $M_A$,
awaiting their complete solutions.
For an infinite-dimensional space $H$ the relations between $A$ and $M_A$
become even more involved and depend on advanced methods and techniques
stemming from various domains of analysis.
As shown in \cite{MTCrelle},
the structure of $M_A$ can be rather arbitrary if one is allowed to exclude
elements of $M_A$ corresponding to thin subsets of $\mathbb N\times \mathbb N.$
On the other hand, $M_A$ may have a very particular form,
and for example, (block) tridiagonal and pentadiagonal forms
are standard tools in the study of selfadjoint and unitary operators,
respectively. As another, striking instance, recall that
a large class of bounded selfadjoint operators on $H$  having a Hankel matrix
with respect to some basis can be  characterized using spectral terms,
as shown in \cite{Treil}.

In the study of $M_A$ the main diagonal plays a distinguished role,
as it carries a lot of information about $A.$
However, extracting of this information is often very challenging
if $A$ is far from being compact.
To facilitate such a study one may consider
the whole set of diagonals $\mathcal D(A):=\{\langle A e_n, e_n\rangle_{n=1}^{\infty}\}\subset \ell_\infty(\mathbb N).$
While the study of diagonals has received a substantial attention last years,
especially for selfadjoint and normal $A,$
the structure of $\mathcal D(A)$ and its interplay with $A$
are still far from being understood.
First, note that the study of diagonals can proceed
as either the study of $\mathcal{D}(A)$ for a class of operators $A$,
or as the study of  $\mathcal{D}(A)$ for a fixed operator $A.$
Most of the relevant research has been concentrated on the first, easier problem,
but there have been several papers addressing the second problem as well.
Since the entries constituting diagonals of $A$ belong to the numerical range $W(A)$
of $A$, the numerical range appears
to be  very natural tool for describing, at least, a part of $\mathcal D(A)$.
Moreover,  the essential numerical range $W_e(A)$ of $A,$
consisting of the limits of elements in $\mathcal D(A),$
whenever they exist, looks like an appropriate substitute or
useful addition.
Both $W(A)$ and $W_e(A)$  will be basic in our considerations.

Until Kadison's papers \cite{Kadison02a, Kadison02b}, the research on $\mathcal D(A)$
was rather sporadic. To note several landmark results preceding \cite{Kadison02a, Kadison02b},
recall that in a deep article by Stout \cite{Stout81} the set
$\mathcal D(A)$ appeared in a natural way in the study of Schur algebras.
Stout related $\mathcal D(A)$
to $W_{e}(A),$ and proved that $0 \in W_e(A)$
implies that for any $(\alpha_n)_{n=1}^{\infty}
\not \in \ell_1$ there is $(d_n)_{n=1}^{\infty} \in \mathcal D(A)$ such that $|d_n|\le |\alpha_n|$ for all $n,$
thus clarifying fine structure of $\mathcal D(A).$
The existence of zero diagonals in $\mathcal D(A)$
for arbitrary $A\in L(H)$ was studied by P. Fan in \cite{Fan84}
He proved that $A$ admits a zero diagonal
if and only if there exists an orthonormal basis $(e_k)_{k=1}^\infty \subset H$
such that  $\sum_{k=1}^{n_m} \langle A e_k, e_k \rangle \to 0$ as $m \to \infty$
for a subsequence $(n_m)_{m=1}^\infty.$
Clearly, the same statement holds with any $\lambda \in \mathbb C$ in place of $0$, thereby describing constant diagonals.
The paper \cite{Fan84} inspired a closer look at the nature of $\mathcal D(A),$
and Herrero showed in \cite{Herrero91} that if $(d_n)^\infty_{n=1}$ belongs to the interior $\Int W_{e}(A)$ of $W_{e}(A)$
and $(d_n)^\infty_{n=1}$ has a limit point again in $\Int W_{e}(A)$, then $(d_n)^\infty_{n=1} \in \mathcal D(A)$.
However, there is, in general, a certain arbitrariness if one restricts to the study of diagonals for class
of operators. For instance, as shown in \cite{Fong_d}, any bounded sequence can be realized as a diagonal
of some nilpotent operator (of order $2$).

In \cite{Bourin}, Bourin put the results of Herrero into a  general context of operator-valued
diagonals. It was  proved in \cite{Bourin} that if $W_{e}(A)$  contains the open unit disc, then for every
$(D_n)_{n=1}^\infty \subset L(H)$
satisfying
$\sup_{n\ge1}\Vert D_n\Vert<1$,
there exist mutually orthogonal subspaces $M_n \subset H$ such that
$\bigoplus_{n=1}^{\infty} M_n = H$ and, denoting by $P_{M_n}$ the orthogonal projection onto $M_n,$
the operator $P_{M_n} A \upharpoonright_{M_n}$ is unitarily equivalent to $D_n$ for all $n \in \mathbb  N.$
 The direct sums $\bigoplus_{n \ge 1} P_{M_n} A \upharpoonright_{M_n},$ called ``pinchings'' of $A$ in \cite{Bourin}, can be considered as  operator counterparts of elements from $\mathcal D(A),$ and they essentially coincide with those  elements when $D_n$ act on one-dimensional spaces. A complementary discussion and related results can be found in \cite{Bourin_JFA}.
A close, but weaker statement was obtained much earlier in \cite[Theorem 4.8]{Bercovici}.
See also \cite[p. 440]{Anderson} or \cite[Lemma 2.5]{Davidson}.

A different perspective  has been opened since
Kadison's striking results on diagonals of selfadjoint projections.
Given tuples of reals $d=\{d_{i}\}_{i=1}^{n}$ and $\lambda=\{\lambda_{i}\}_{i=1}^{n}$
in non-increasing order, Schur and Horn proved that there is a Hermitian matrix with diagonal values $\{d_{i}\}_{i=1}^{n}$ and eigenvalues $\{\lambda_{i}\}_{i=1}^{n}$ if and only if $\lambda$ majorizes $d$, in the sense that
$\sum_{i=1}^{k}d_{i}\leq\sum _{i=1}^{k}\lambda_{i}$ for $k=1,\dots ,n-1$, and $\sum_{i=1}^{n}d_{i}=\sum _{i=1}^{n}\lambda_{i}.$
Looking for similar results in
infinite dimensions,
Kadison discovered in \cite{Kadison02a, Kadison02b}
that a sequence $(d_n)_{n=1}^\infty$ is
a diagonal of some \emph{selfadjoint} projection if and only if it takes values in $[0, 1]$ and
the sums $a := \sum_{d_n < 1/2} d_n$ and $b:=\sum_{d_n \ge 1/2} (1-d_n)$ satisfy either $a+b=\infty$ or $a+b <\infty$ and $a-b \in \mathbb Z.$
This is in sharp contrast with a result by A. Neumann \cite{Neumann} describing
$l^\infty$-closure of diagonals of a diagonalizable selfadjoint operator as the closure
of all permutations of the sequence of its eigenvalues. If the kernel and range of
a projection $P$ are infinite-dimensional, then one only gets $\overline{\mathcal D(P)}^{\|\cdot\|_{\infty}}=[0,1]^\NN.$
The Kadison integer condition was recognized as the Fredholm index obstruction in the subsequent paper by Arveson \cite{Arveson_USA},
where Kadison's dichotomy was studied for normal operators with finite spectrum.
Later on, the nature of the integer was identified and clarified by Kaftal and Loreaux
in  \cite{Kaftal17}
 and in the subsequent paper
\cite{Loreaux19},
as a so-called essential codimension of a pair of projections.

Developing their ideas in various directions, Arveson and Kadison
recast in \cite{Arveson06} generalizations of the Schur-Horn theorem
in the setting of operator algebras, where matrix algebras and diagonals are replaced, respectively, by finite factors and maximal Abelian self-adjoint subalgebras (MASAs),
and the Schur-Horn majorisation
is generalised properly.
They gave also rise to an intensive activity around
the structure of $\mathcal D(A)$ for classes of
$A \in L(H).$
Without mentioning all of the substantial contributions,
we give only a few samples related to  this discussion.
Pairs of null real sequences realized as sequences of eigenvalues and diagonals of positive compact operators with a trivial kernel were characterized in \cite{Kaftal10}.
A description of diagonals for selfadjoint operators $A$ with finite spectrum was given in \cite{Bownik15}.
For a class of unitary operators $A,$ the set  $\mathcal D(A)$   was characterized in \cite{Jasper}.
Very recently, an (almost) complete description of diagonals of arbitrary bounded
selfadjoint operators was obtained in the important papers \cite{Bownik_m} and \cite{Bownik_n}.

The main bottleneck in the study of  diagonals described above
was a lack of techniques taking into account presence of continuous spectrum.
Thus, even the description of elements of $\mathcal D(\mathcal M)$ for the toy multiplication operator
$(Mf)(t)=t f(t),$ $ f \in L^2([0,1]),$ was out of reach.
To remedy this problem, a new approach to the study the diagonals of bounded operators $T$
(and their tuples
$\mathcal A=\langle A_1, \dots, A_k \rangle \in B(H^k)$)
was proposed in \cite{MT19}.
Arguing in terms of geometry of numerical ranges,
\cite{MT19} introduced
a
\emph{Blaschke-type} condition
$
\sum_{n=1}^{\infty} \dist (d_n, \mathbb C \setminus W_{e} (A))=\infty
$
on the size of
$(d_n)_{n=1}^\infty \subset {\rm Int}\, W_e(A)$
near the boundary of
$W_{e}(A)$
and offered, in particular, the following two results
given in Theorem \ref{blash_th} below.
\newpage
\begin{theorem}\label{blash_th}
Let $A \in L(H).$
\begin{itemize}
\item [(i)]
 If $(d_n)_{n=1}^\infty
\subset {\rm Int} \, W_e(A)$  satisfies
\begin{equation}\label{bla}
\sum_{n=1}^\infty \dist (d_n, \mathbb C \setminus W_e(A))=\infty,
\end{equation}
then $(d_n)_{n=1}^{\infty} \subset \mathcal D(A).$
\item [(ii)] Let $W_e(A)$ contain the open unit disc.
Let $H_n, n\in\mathbb N,$ be separable Hilbert spaces (finite or infinite-dimensional)
and $D_n\in L(H_n)$ be strict contractions, i.e., $\|D_n\|<1$
for all $n \in \mathbb N.$ If
\begin{equation}\label{bla_o}
\sum_{n=1}^\infty (1-\|D_n\|)=\infty,
\end{equation}
then there is
a sequence $(M_n)_{n=1}^{\infty}$ of
 mutually orthogonal subspaces of $H$ such that
$\bigoplus_{n=1}^{\infty} M_n = H$ and $P_{M_n} A \upharpoonright_{M_n}$
is unitarily equivalent to $D_n$ for every $n \in \mathbb  N.$
\end{itemize}
\end{theorem}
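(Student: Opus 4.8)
The plan is to build, one vector at a time, an orthonormal basis $(e_n)_{n=1}^{\infty}$ of $H$ with $\langle Ae_n,e_n\rangle=d_n$ for every $n$; part~(ii) will follow from the same scheme, with single vectors replaced by orthonormal blocks realizing the matrices of the $D_n$. Everything rests on a quantitative one-step lemma together with a simple bookkeeping device. For the lemma I would use two standard facts: compressing $A$ to a subspace of finite codimension does not change $W_e(A)$, and $\mathrm{Int}\,W_e(B)\subseteq W(B)$ for any $B$ (because $W_e(B)\subseteq\overline{W(B)}$ and, $W(B)$ being convex, the interior of its closure lies in it). Fixing a finite-dimensional $F\subseteq H$, writing $K=H\ominus F$, $B=P_K A\!\upharpoonright_K$, and fixing a unit vector $g\in K$, a point $d\in\mathrm{Int}\,W_e(A)=\mathrm{Int}\,W_e(B)$, and $\rho:=\mathrm{dist}(d,\mathbb C\setminus W_e(A))>0$, I would argue as follows. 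Put $c:=\langle Ag,g\rangle\in W(B)$; if $c\ne d$, let $z$ be the point on the ray from $c$ through $d$ at distance $\rho/2$ beyond $d$, so that $z\in\mathrm{Int}\,W_e(A)$, and pick a unit vector $w$ in the finite-codimensional subspace $K'=K\ominus\mathrm{span}\{g,P_KAg,P_KA^*g\}$ with $\langle Aw,w\rangle=z$; the choice of $K'$ kills the cross terms $\langle Ag,w\rangle$ and $\langle Aw,g\rangle$, so with $t:=\dfrac{\rho/2}{|d-c|+\rho/2}\in(0,1]$ the unit vector $e:=\sqrt{t}\,g+\sqrt{1-t}\,w$ satisfies $e\perp F$, $\langle Ae,e\rangle=tc+(1-t)z=d$ by collinearity, and $|\langle g,e\rangle|^2=t\ge\rho/(6\|A\|)$, where I used $|d-c|\le 2\|A\|$ and $\rho\le 2\|A\|$. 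Thus adjoining $e$ to $F$ multiplies the defect $\|P_{F^\perp}g\|^2$ by $1-t\le 1-\rho/(6\|A\|)$; if $P_{F^\perp}g=0$, I would instead take any unit $e\in K$ with $\langle Ae,e\rangle=d$, which exists since $d\in W(B)$.

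To assemble~(i), I would fix an orthonormal basis $(f_j)_{j\ge1}$ of $H$ and construct $e_1,e_2,\dots$, feeding in $d_1,d_2,\dots$ in order, after partitioning the steps into consecutive finite runs $\mathcal B_1,\mathcal B_2,\dots$ and each $\mathcal B_k$ into $\mathcal B_{k,1},\dots,\mathcal B_{k,k}$: throughout $\mathcal B_{k,j}$ I would apply the one-step lemma with $g$ the normalized component of $f_j$ orthogonal to the span built so far. Since $\sum_n\rho_n=\infty$ with $\rho_n:=\mathrm{dist}(d_n,\mathbb C\setminus W_e(A))$, the tail of this Blaschke sum is infinite at every stage, so the runs $\mathcal B_{k,j}$ can be kept finite and yet chosen long enough that $\|P_{F^\perp}f_j\|^2<2^{-k}$ at the end of $\mathcal B_{k,j}$; as this defect is nonincreasing, $\|P_{F_m^\perp}f_j\|\to 0$ for every $j$, whence $\overline{\mathrm{span}}\{e_m:m\ge1\}=H$, and since $\langle Ae_m,e_m\rangle=d_m$ by construction, $(d_m)\in\mathcal D(A)$.

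For part~(ii) I would run the same machine with the one-step lemma replaced by a one-block lemma: if $\mathbb D\subseteq W_e(A)$, $D$ is a strict contraction on a separable $H_D$, $N\subseteq H$ has finite codimension and $g\in N$ is a unit vector, then there are a subspace $M\subseteq N$ and a unitary $U\colon H_D\to M$ with $U^*(P_M A\!\upharpoonright_M)U=D$ and $\|P_M g\|^2\ge c_A(1-\|D\|)$ for some $c_A>0$. To prove this I would set $\gamma^2=\dfrac{1-\|D\|}{8(\|A\|+1)}$, $\delta=\sqrt{1-\gamma^2}$, fix an orthonormal basis $(\epsilon_j)$ of $H_D$, and let $\widehat D$ be the operator whose matrix agrees with that of $D$ off the first row and column, whose $(1,j)$ and $(j,1)$ entries for $j\ge2$ are the corresponding entries of $D$ divided by $\delta$, and whose $(1,1)$ entry is $z:=\delta^{-2}\bigl(\langle D\epsilon_1,\epsilon_1\rangle-\gamma^2\langle Ag,g\rangle\bigr)$; an elementary estimate then gives $\|\widehat D\|\le 1-\tfrac12(1-\|D\|)<1$. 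Applying the theorem of Bourin recalled in the introduction to $P_{N'}A\!\upharpoonright_{N'}$, with $N'\subseteq N$ finite-codimensional and orthogonal to $g,Ag,A^*g$, I would realize $\widehat D$ isometrically on an orthonormal system $u_1,u_2,\dots\subseteq N'$; then $v_1:=\gamma g+\delta u_1$ and $v_j:=u_j$ for $j\ge2$ are orthonormal, satisfy $\langle Av_i,v_j\rangle=\langle D\epsilon_i,\epsilon_j\rangle$ for all $i,j$ (the cross terms vanishing by the choice of $N'$), and $|\langle g,v_1\rangle|^2=\gamma^2$, so $M:=\overline{\mathrm{span}}\{v_j\}$ does the job. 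With this lemma in hand, the back-and-forth of~(i) applies verbatim with $1-\|D_n\|$ in place of $\rho_n$ — legitimate because $\sum_n(1-\|D_n\|)=\infty$ — and yields mutually orthogonal $M_n$ with $\bigoplus_n M_n=H$ and $P_{M_n}A\!\upharpoonright_{M_n}$ unitarily equivalent to $D_n$.

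The hard part is the quantitative alignment in these lemmas. The classical arguments of Herrero and Bourin require $(d_n)$ to cluster inside $\mathrm{Int}\,W_e(A)$, respectively $\sup_n\|D_n\|<1$, and collapse in the Blaschke regime; here one must, at \emph{every} step, hit the prescribed value (matrix) \emph{exactly} while capturing a definite portion — of order $\rho_n$, respectively $1-\|D_n\|$ — of the current target. It is exactly this linear dependence that turns divergence of $\sum_n\rho_n$ (respectively $\sum_n(1-\|D_n\|)$) into the assertion that every target gets absorbed and the $e_m$ (respectively the $M_n$) exhaust $H$; in the block case, tuning the constants so that $\|\widehat D\|<1$ is the most delicate point.
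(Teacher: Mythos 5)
First, a point of reference: the paper does not prove Theorem \ref{blash_th} at all --- it is quoted verbatim from \cite{MT19} --- so your attempt can only be judged on its own merits and against the general methodology of that source. Your part (i) is correct and is essentially the standard argument: the one-step lemma (place $z$ on the ray from $c=\langle Ag,g\rangle$ through $d$ at distance $\rho/2$ beyond $d$, realize $z$ exactly on a finite-codimensional subspace via Lemma \ref{codim}(b) with the cross terms killed, and conclude that the convex combination $e=\sqrt{t}\,g+\sqrt{1-t}\,w$ hits $d$ while capturing $|\langle g,e\rangle|^2=t\ge \rho/(6\|A\|)$ of the current target) is exactly the kind of quantitative capture estimate that makes the Blaschke condition work, and your run-by-run bookkeeping correctly converts $\sum\rho_n=\infty$ into completeness of $(e_m)$. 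The one-block lemma for part (ii), taken in isolation, is also sound: the perturbed matrix $\widehat D$ does satisfy $\|\widehat D\|\le 1-\tfrac12(1-\|D\|)$ with your choice of $\gamma$, and the vectors $v_1=\gamma g+\delta u_1$, $v_j=u_j$ reproduce the matrix of $D$ exactly.

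The genuine gap is in the assertion that for part (ii) ``the back-and-forth of (i) applies verbatim.'' Your one-block lemma requires the ambient subspace $N$ to have \emph{finite} codimension: this is what guarantees $W_e(P_{N'}A{\upharpoonright}_{N'})=W_e(A)\supset\mathbb D$ and hence lets you invoke Bourin's theorem. But the theorem allows the $H_n$ to be infinite-dimensional, so as soon as one block $M_m$ is infinite-dimensional, the space $N=\bigl(\bigoplus_{m'\le m}M_{m'}\bigr)^\perp$ available for the next block has infinite codimension. Compression to an infinite-codimensional subspace can destroy the essential numerical range (e.g.\ $A=0\oplus A_1$ with all the essential numerical range carried by the second summand), so neither Lemma \ref{codim}(a) nor Theorem \ref{bourin} applies, and the iteration stalls at the second step already. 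Repairing this is not a formality: one must build the infinite-dimensional blocks incrementally --- exhausting each $H_n$ by finite-dimensional subspaces and committing only finitely many dimensions of $H$ at every stage, interleaved across all $n$ --- and since a partial exact realization of a corner of $\widehat D_n$ cannot in general be extended exactly without modifying the already chosen vectors, this forces an approximation-and-correction scheme with limits, of precisely the kind carried out in Proposition \ref{P6} of the present paper (the doubly indexed isometries $V_{k,n}$). That mechanism, not the capture estimate, is the missing ingredient in your part (ii).
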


(The statement in (ii) was formulated in \cite{MT19} somewhat imprecisely.)
Theorem \ref{blash_th},(i) substantially generalized Herrero's theorem,
and the approach in \cite{MT19} led to a general and new method for constructing
a big part of diagonals of operators $A \in L(H)$ that works in a variety of different and general settings,
for instance in \cite{Bownik_m} and \cite{Bownik_n} mentioned above.
For its pertinent discussion and implications, see e.g. \cite{LW20}.
The relevance of \eqref{bla} is illustrated by the fact that for the unilateral shift $S$ on $H^2(\mathbb D)$ and for the multiplication operator $M$
as above the sets $\mathcal D(S)$ and $\mathcal D(M)$  are in fact characterized by \eqref{bla},
see \cite{Herrero91} and \cite{LW20}.
Moreover, it allowed one to  construct diagonals  for fixed operators rather than operator classes.
It is crucial that the same type of technique yields the results in the context of operator diagonals
thus replacing the  uniform contractivity condition
from \cite{Bourin} on operator diagonals
by a more general assumption \eqref{bla_o}
of Blaschke's type.
Finally, the approach from \cite{MT19}  was developed in
\cite{MT_JFA1}
and \cite{MTCrelle} for describing the properties of the whole matrices
$M_A$ for $A \in L(H),$ although the main diagonals played a distinguished role there.

The present paper takes the next step ahead in
understanding the properties of matrices of $A \in L(H)$
and  related concepts.
It is a frequent and very fruitful idea in analysis to clarify whether a certain important property is
stable under perturbations in a natural sense. One of the ways to realise this is to obtain its time-dependent version. In this spirit one may mention, for instance,
the study of the existence of regular curves of eigenvalues for families $(A(t))_{t \in [0,1]} \subset L(H),$
see e.g. \cite{Baumgartel} or \cite[Chapter VII]{Kato},
or a recent work \cite{Motakis} providing a time-dependent, ``continuous'' version of an important estimate
by Bourgain and Tzafriri.
Although time-dependent versions of results on diagonals and matrix representations look
natural as such, they are essentially missing in the literature.
The only paper  in this direction, \cite{Bownik_Szysz}, provides a time-dependent
version of Kadison's theorem.
It shows that if $d_n: X \to [0,1],$ $ n \in \mathbb N,$  are measurable functions on a measure space $X,$
then there exists a measurable projection-valued function $P:X \to L(H)$ with
${\rm diag}\, (P(x))= (d_n(x))_{n=1}^{\infty}$ for almost all $x \in X$ if and only if for almost all $x\in X$
either $a(x)=\infty$ or $b(x)=\infty,$ or  $a(x), b(x)<\infty$ and $a(x)-b(x)\in \mathbb Z,$
where $a$ and $b$ are defined similarly to the above. The statement yielded a description of spectral functions
of shift-invariant subspaces in $L^2(\mathbb R^d)$, and will surely spark an interest in the near future.
 However,
the arguments in \cite{Bownik_Szysz} are tailored  for the concrete setting of Kadison's theorem,
and cannot be used in the general case of selfadjoint or, more generally, arbitrary bounded operators.
Moreover,
measurability is a rather
weak property and if one aims at more regular functions $d_n$, then a different technique is required.
This and other similar problems will be addressed in this paper.

First, we obtain the next time-dependent, ``continuous'' version of a very partial but important case
of Bourin's result mentioned above. It deals, in fact, with  continuity of classical compressions,
under the assumptions ensuring that such compressions exist.
See Remark \ref{continu} below.

\begin{theorem}\label{T9_intro}
Let $A \in C([0,1], L(H))$ be such that $0\in {\rm Int}\, W_e(A(t))$ for all $t\in [0,1]$.
Let $Z$ be a separable Hilbert space, and let $D \in C([0,1], L(Z))$
satisfy
\begin{equation}\label{t9eq}
\max_{t\in [0,1]}\|D(t)\|<\min_{t\in [0,1]}\dist \bigl(0,\partial W_e(A(t))\bigr).
\end{equation}
Then there exists $S \in C([0,1], L(Z,H))$ such that $S$ is isometry-valued and
\begin{equation}\label{compress}
 S^*(t)A(t)S(t)=D(t), \qquad t\in [0,1].
\end{equation}
\end{theorem}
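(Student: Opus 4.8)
The plan is to produce an exact solution at $t=0$ and then \emph{continue} it along $[0,1]$ in norm, gluing finitely many local continuations by a compactness argument.

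\emph{Static existence.} For fixed $t$ choose $\rho(t)$ with $\|D(t)\|<\rho(t)<\dist(0,\partial W_e(A(t)))$. Since $W_e(A(t))$ is compact and convex and contains a neighbourhood of $0$, it contains the open disc of radius $\rho(t)$; hence $W_e(A(t)/\rho(t))$ contains the open unit disc while $\|D(t)/\rho(t)\|<1$. Applying Bourin's theorem (recalled above, from \cite{Bourin}) to $A(t)/\rho(t)$, with first block $D(t)/\rho(t)$ and infinitely many further strict-contraction blocks chosen freely, produces a subspace $M\subset H$ with infinite-dimensional orthogonal complement such that $P_MA(t)|_M$ is unitarily equivalent to $D(t)$. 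Composing with a unitary $Z\to M$ gives an isometry $S_t\colon Z\to H$ with infinite-codimensional range and $S_t^*A(t)S_t=D(t)$.

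\emph{Local continuation.} The core assertion is: there exists $\eta>0$, depending only on $r:=\min_t\dist(0,\partial W_e(A(t)))-\max_t\|D(t)\|>0$ and on moduli of continuity of $A$ and $D$, such that whenever $S_0$ is an exact solution at a point $t_0$ whose range has infinite codimension (and satisfies a mild quantitative ``room'' property specified below), there is a norm-continuous isometry-valued $S$ on $I:=[t_0,t_0+\eta]\cap[0,1]$ with $S(t_0)=S_0$, $S(t)^*A(t)S(t)=D(t)$ for $t\in I$, and the same room property at the right endpoint of $I$. To prove it, fix an infinite-dimensional $N\subset\operatorname{ran}(S_0)^{\perp}$ and seek $S(t)=S_0V(t)+J_NT(t)$, where $J_N\colon N\hookrightarrow H$ is the inclusion, the correction $T(t)\colon Z\to N$ is small with $T(t_0)=0$, and $V(t)=(I-T(t)^*T(t))^{1/2}$ is dictated by the requirement that $S(t)$ be an isometry (using $S_0^*J_N=0$). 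Inserting this into $S(t)^*A(t)S(t)=D(t)$ yields a nonlinear equation for $T(t)$ whose linear part is $T\mapsto S_0^*A(t)J_N\,T+T^*J_N^*A(t)S_0$ and whose inhomogeneous term $[D(t)-D(t_0)]-S_0^*[A(t)-A(t_0)]S_0$ has norm at most the oscillation of $A,D$ on $I$. The ``room'' requirement is that $N$ can be chosen so that $J_N^*A(t_0)S_0\colon Z\to N$ is bounded below by a fixed small $|\lambda|>0$ (a multiple of $r$) while $\|S_0^*A(t_0)J_N\|<|\lambda|/2$; this is precisely an extension of the type underlying \cite{MT19} — adjoin to an orthonormal system new orthonormal vectors realizing prescribed small matrix entries of $A(t_0)$ — and is available because $0\in\Int W_e(A(t_0))$. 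For short enough $I$ this structure persists for all $t\in I$, which makes the linear map boundedly invertible on $L(Z,N)$ uniformly in $t\in I$ (restrict the correction to range in the $Z$-sized subspace realizing the lower bound and solve the resulting real-linear equation by a Neumann series). A contraction mapping in a small ball of $C(I,L(Z,N))$ then solves the full equation; $T(t_0)=0$ is forced by uniqueness on the slice $t=t_0$, and continuity of the solution operator gives $T\in C(I,L(Z,N))$, so $S\in C(I,L(Z,H))$ with range contained in $\operatorname{ran}(S_0)\oplus N$.

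\emph{Chaining.} Choose $0=s_0<\dots<s_m=1$ with mesh $<\eta$, take a static solution $S_{(0)}$ at $s_0$ with infinite-codimensional range, and apply the continuation lemma successively on $[s_{j-1},s_j]$ with initial datum $S_{(j-1)}:=S(s_{j-1})$ to extend $S$ over $[s_{j-1},s_j]$; the pieces agree at the nodes, hence glue to the desired $S\in C([0,1],L(Z,H))$. The only subtlety in the iteration is that the auxiliary subspaces $N$ used at the $m$ steps must be chosen without exhausting the essential numerical range needed at the next step: this is automatic when $\dim Z<\infty$ (ranges of isometries are then finite-codimensional, so $W_e$ is unaffected by passing to orthocomplements), and in general is handled by allocating in advance orthogonal infinite-dimensional reservoirs of $H$ adapted to the Hausdorff-continuous family $t\mapsto W_e(A(t))$.

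I expect the main obstacle to be the local continuation step, specifically the passage from the qualitative hypothesis $0\in\Int W_e(A(t))$ to a quantitative, parameter-uniform invertibility of the linearized correction operator; this is where a new technique is needed, and where the selfadjoint case becomes transparent, the correction equation reducing there to a genuine Lyapunov-type equation on selfadjoint operators. A secondary obstacle is the bookkeeping of ``room'' in the iteration.
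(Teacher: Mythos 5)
Your route --- an exact solution at $t=0$, local continuation by a contraction--mapping correction, then chaining --- is genuinely different from the paper's, which factorizes \eqref{compress} as $S=VJW$ through a \emph{constant} diagonal operator (Proposition \ref{P6}), a bilateral shift (Proposition \ref{P7}) and an explicit defect-operator dilation (Proposition \ref{P8}), and so never has to linearize around a moving exact solution. The local step of your scheme is plausible once the ``room'' property holds at the \emph{left} endpoint: one can indeed manufacture it there by applying Bourin's theorem to the block contraction $\left(\begin{smallmatrix} D(t_0) & 0\\ \lambda I & 0\end{smallmatrix}\right)$ on $Z\oplus Z$, which produces $S_0,S_1$ with orthogonal ranges, $S_1^*A(t_0)S_0=\lambda I$ and $S_0^*A(t_0)S_1=0$, so that $N={\rm ran}\, S_1$ gives the lower bound and smallness you need, and the real-linear map $T\mapsto C(t)T+T^*B(t)$ is then uniformly invertible for $t$ close to $t_0$.

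The genuine gap is the propagation of this room to the \emph{right} endpoint of each subinterval, which you assert as an output of the continuation lemma but never prove. The room property is not a consequence of $S^*(t_1)A(t_1)S(t_1)=D(t_1)$ together with $0\in\Int W_e(A(t_1))$ and infinite codimension of the range: if $A(t_1)$ leaves ${\rm ran}\, S(t_1)$ (almost) invariant --- e.g.\ $A=D\oplus B$ on $H_1\oplus H_1^{\perp}$ with $S$ unitary onto $H_1$ and $0\in\Int W_e(B)$ --- then $P_{N'}A(t_1)S(t_1)$ is (almost) zero for \emph{every} $N'\perp{\rm ran}\, S(t_1)$, so no choice of reservoir restores a lower bound. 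Your two remedies do not meet this: finite-dimensionality of $Z$ only guarantees that $W_e$ survives passage to orthocomplements (Lemma \ref{codim}) and says nothing about a lower bound on the off-diagonal block $P_{N'}AS$; and ``pre-allocated orthogonal reservoirs'' address the availability of orthogonal subspaces, not the behaviour of $A(t_1)$ on the range of the solution the contraction mapping has just handed you, over which you have no control. The natural fix --- carry the room along as part of the unknown by continuing an enlarged block target $\left(\begin{smallmatrix} D(t) & 0\\ \lambda I & 0\end{smallmatrix}\right)$ --- only pushes the problem up a level: the enlarged block needs room of its own, and a hierarchy of depth $m\approx\eta^{-1}$ costs roughly $m\lambda^{2}$ in the norm budget \eqref{t9eq} while $\eta$ itself shrinks as $\lambda\to0$; you have not shown these competing requirements can be met simultaneously. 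As written, the chaining step is unjustified, and this (rather than the uniform invertibility of the linearization, which the block trick settles locally) is the primary obstacle.
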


Moreover, for a special type of compression with finite-dimensional $Z$
we are able to control rather general moduli of continuity, see Theorem \ref{T5} below.
If the compression $D$ is scalar, then we prove
a stronger version  of Theorem \ref{T9_intro}, dealing with  $A$ and $D$
of prescribed smoothness.
Here differentiability is understood in the \emph{real sense}, so that all derivatives are
only $\mathbb R$-linear.

\begin{theorem}\label{partition_intro}
Let $A\in L(H)$ and $\Omega\subset\mathbb C$ be an open set. For $r \in \mathbb N\cup\{0, \infty\},$ let $d:\Omega\to\Int W_e(A)$ be a $C^r$-function. Then there exists a sequence
$(u_n)_{n=1}^\infty \subset C^r (\Omega, H),$ such that
\begin{itemize}
\item [(i)] $\|u_n(\omega)\|=1, \, n\in\mathbb N, \omega\in \Omega$;

\item [(ii)] If $n\ne m,$ $ n, m \in \mathbb N,$ then $u_m(\omega)\perp u_n(\omega')$ for all $\omega,\omega'\in \Omega$;

\item [(iii)] $\langle Au_n(\omega),u_n(\omega)\rangle=d(\omega), \, n\in\mathbb N, \omega\in \Omega$;

\item [(iv)] There exists a Hilbert space $X$ and an isometry-valued function $S \in C^r(\Omega, L(X, H))$
satisfying
\[
S^*(\omega) A S(\omega)=d(\omega) I, \qquad \omega \in \Omega.
\]
\end{itemize}
\end{theorem}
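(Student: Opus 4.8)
The plan is to reduce the statement to a single quantitative lemma about perturbing one unit vector $u$ with $\langle Au,u\rangle = d(\omega_0)$ to a nearby unit vector $u'$ with $\langle A u', u'\rangle = d(\omega)$, in a way that is $C^r$ in $\omega$ and keeps $u'$ inside a prescribed subspace. First I would fix $\omega_0 \in \Omega$ and, using $d(\omega_0) \in \Int W_e(A)$, produce an orthonormal sequence $(v_n)_{n=1}^\infty$ with $\langle A v_n, v_n\rangle = d(\omega_0)$ and $\langle A v_m, v_n\rangle$ small for $m\ne n$ (this is the standard consequence of $d(\omega_0)$ being in the interior of the essential numerical range: one extracts a quasi-diagonal orthonormal system on which $A$ looks like $d(\omega_0) I$ plus a small off-diagonal piece). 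The key analytic input is then a local implicit-function-type construction: given a two-dimensional (over $\mathbb{R}$) family of directions inside $\overline{\rm span}(v_n)$, one can solve $\langle A u(\omega), u(\omega)\rangle = d(\omega)$ for $\|u(\omega)\|=1$ with $u$ depending $C^r$ on $\omega$, because the map $u \mapsto \langle Au,u\rangle$ restricted to the unit sphere of a suitable finite-dimensional subspace is a submersion onto a neighbourhood of $d(\omega_0)$ in $\mathbb{C} \cong \mathbb{R}^2$. The smooth (including $C^\infty$) implicit function theorem then gives local $C^r$ solutions, and $u(\omega_0)$ can be taken to be $v_n$.

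Next I would globalize. Cover $\Omega$ by balls $B_\alpha$ on each of which the above local construction yields, for every $n$, a $C^r$ unit-vector function $u_n^\alpha: B_\alpha \to H$ with values in $\overline{\rm span}(v_k : k \in I_\alpha)$ for an infinite index set $I_\alpha$, satisfying (iii) on $B_\alpha$; by passing to a locally finite refinement and reindexing (splitting $\mathbb{N}$ into infinitely many infinite blocks, one block per chart, so that different charts use orthogonal pieces of the original system) one arranges the orthogonality in (ii) automatically. To patch two overlapping charts I would interpolate along a $C^r$ path in the relevant sphere: the fibre $\{u : \|u\|=1, \langle Au,u\rangle = d(\omega)\}$ inside a large enough finite-dimensional subspace is connected (indeed a sphere bundle of positive-dimensional fibre, once the subspace dimension is large relative to the "defect" of $d(\omega)$ from the boundary of $W_e(A)$), so one can move from the value dictated by one chart to the value dictated by the next using a partition of unity $(\varphi_\alpha)$ subordinate to the cover and a geodesic-type homotopy, staying on the constraint surface throughout. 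Here the condition $d(\omega) \in \Int W_e(A)$ for \emph{all} $\omega$ — together with compactness-type arguments on compact subsets of $\Omega$, or a local choice of "defect" $\delta_\alpha>0$ on each chart — guarantees a uniform lower bound on the fibre dimension locally, which is what makes the interpolation possible.

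For part (iv) I would run the same machinery with a fixed finite-dimensional $Z$ and then take $X = \bigoplus_n Z_n$ with $Z_n \cong Z$, assembling $S(\omega)$ from blocks built on mutually orthogonal pieces $\overline{\rm span}(v_k : k \in J_\alpha)$ exactly as for the $u_n$; the identity $S^*(\omega) A S(\omega) = d(\omega) I$ is the statement that $A$ compressed to the range of each block equals $d(\omega)$ times the identity, which is (iii) upgraded to an orthonormal \emph{tuple} instead of a single vector, and is obtained by applying the local submersion/implicit-function argument to the map sending a $\dim Z$-frame to its compression of $A$.

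The main obstacle I expect is the smooth \emph{patching} across charts while remaining exactly on the nonlinear constraint surface $\{\langle Au,u\rangle = d(\omega)\}$: naive convex interpolation $\sum_\alpha \varphi_\alpha(\omega) u_n^\alpha(\omega)$ destroys both the norm-one condition and the diagonal value, so one genuinely needs the fibres to be connected $C^r$-manifolds with a locally uniform lower bound on their dimension and a $C^r$ choice of connecting homotopy — essentially a parametrized version of the path-lifting in a fibre bundle with contractible-enough (here: high-dimensional sphere) fibres. Making this choice $C^r$ jointly in $\omega$, and checking that the ambient finite-dimensional subspace can be chosen locally constant (or smoothly varying) so that the implicit function theorem applies with uniform constants, is the technical heart of the argument; everything else is bookkeeping with orthonormal systems and partitions of unity.
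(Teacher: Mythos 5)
Your local step is essentially sound (and, suitably set up, even reduces to an affine submersion: if $v_1,v_2,v_3$ are unit vectors realizing $a_1,a_2,a_3\in\Int W_e(A)$ and are pairwise orthogonal \emph{and} $A$-orthogonal, then $u=\sum_i\sqrt{\lambda_i}\,v_i$ has norm one and $\langle Au,u\rangle=\sum_i\lambda_i a_i$, so the barycentric coordinates of $d(\omega)$ give an exact local $C^r$ solution with no implicit function theorem needed). The genuine gap is exactly the step you flag as the ``technical heart'': the $C^r$ patching across charts while staying on the constraint surface $\{\|u\|=1,\ \langle Au,u\rangle=d(\omega)\}$. You assert that the fibres are connected, ``contractible-enough,'' high-dimensional-sphere-like manifolds admitting a parametrized $C^r$ path-lifting; none of this is established, and it is not obviously true --- the fibre is the intersection of a sphere with a level set of an $\mathbb R^2$-valued quadratic map, not a sphere, and even connected fibres do not by themselves yield global $C^r$ sections (one needs local triviality plus an obstruction-theoretic or contractibility argument, all carried out $C^r$-uniformly in $\omega$). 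Your scheme for (ii) compounds the difficulty: if different charts use mutually orthogonal blocks of the ambient orthonormal system, then on an overlap the two local sections live in orthogonal subspaces and any interpolation must pass through their joint span, which is precisely where the constraints are hardest to preserve. As written, the proof does not go through.

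The paper circumvents all of this with one structural idea you are missing: cover $\Omega$ by preimages under $d$ of open triangles $T(a_k^{(1)},a_k^{(2)},a_k^{(3)})$ with vertices in $\Int W_e(A)$, take a smooth partition of unity $(f_k)$ subordinate to this cover, and choose the realizing vectors $v_{n,k}^{(i)}$ so that $v_{n,k}^{(i)}\perp v_{m,l}^{(j)},\,Av_{m,l}^{(j)},\,A^*v_{m,l}^{(j)}$ for all distinct index triples. Then the global formula
\[
u_n(\omega)=\sum_k\sqrt{f_k(\omega)}\sum_{i=1}^3\sqrt{\beta_k^{(i)}(d(\omega))}\,v_{n,k}^{(i)}
\]
satisfies (i)--(iii) \emph{exactly}: the double orthogonality kills every cross term, so the norm and the diagonal value are the convex combinations $\sum_k f_k\cdot 1$ and $\sum_k f_k\cdot d(\omega)$. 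In other words, the convexity of $W_e(A)$ linearizes the constraint, and the partition of unity can be applied at the level of coefficients rather than by homotoping between sections of a nonlinear bundle. Statement (iv) then follows by setting $S(\omega)e_n=u_n(\omega)$ and checking smoothness via the local finiteness of the cover. If you want to salvage your route, you would have to actually construct the parametrized homotopies on the fibres; the barycentric-coordinate device is the cleaner (and, here, the intended) way out.
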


Second, we prove the next non-stationary version of Herrero's theorem that was a prototype
for Theorem \ref{blash_th}, (ii).
\begin{theorem}\label{diagonal_intro}
Let $A \in C([0,1], L(H))$ and $(d_n)_{n=1}^\infty\subset C([0,1])$ be such that
\[ d_n(t)\in\Int W_e(A(t)), \qquad n\in \mathbb N, \quad t\in [0,1].
\]
If
\[
\inf\{\dist(d_n(t),\partial W_e(A(t)): n \in \mathbb N, t \in [0,1])\}>0,
\]
then there exists a sequence  $(v_n)_{n=1}^\infty \in C([0,1], H)$ such that
$(v_n(t))_{n=1}^\infty$ is an orthonormal basis in $H$ for every $t\in[0,1],$
and
$$
\langle A(t)v_n(t),v_n(t)\rangle= d_n(t), \qquad n \in \mathbb N, \quad t \in [0,1],
$$

Moreover, the family $(v_n)_{n=1}^\infty$ is equicontinuous if $(d_n)_{n=1}^\infty$ is so.
\end{theorem}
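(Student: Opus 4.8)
The plan is to produce the fields $v_n$ by a back-and-forth induction that builds them in finite blocks, at each stage either adjoining one new continuous field realizing the next prescribed value $d_n(t)$, or adjoining a short block of new fields that pulls one further vector of a fixed orthonormal basis $(e_k)_{k=1}^\infty$ of $H$ to within a prescribed distance of the span of the fields constructed so far. Carried out for all $n$ and all $k$ with tolerances tending to $0$, this yields a continuous orthonormal system $(v_n(t))_{n=1}^\infty$ with $\langle A(t)v_n(t),v_n(t)\rangle=d_n(t)$ whose span is dense in $H$ for every $t$, hence an orthonormal basis. Throughout write $\delta=\inf_{n,t}\dist(d_n(t),\partial W_e(A(t)))>0$ and $M=\max_{t\in[0,1]}\|A(t)\|$.

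The inductive step is organised by transporting to fixed model spaces. If orthonormal $v_1,\dots,v_m\in C([0,1],H)$ have been built, put $P_m(t)=\sum_{j=1}^m v_j(t)v_j(t)^*$ and $Q_m(t)=I-P_m(t)$; since $P_m$ is a continuous projection-valued function of constant rank over the connected interval $[0,1]$, a standard transformation-function construction gives a continuous family of unitaries $U_m(t)\in L(H)$ carrying ${\rm ran}\,Q_m(0)$ onto ${\rm ran}\,Q_m(t)$, which we identify with a fixed separable infinite-dimensional model space $\mathcal H$. Then $B_m(t):=U_m(t)^*Q_m(t)A(t)Q_m(t)U_m(t)\in C([0,1],L(\mathcal H))$, and since $Q_m(t)$ has finite corank one has $W_e(B_m(t))=W_e(A(t))$, so the $d_n(t)$ still lie in $\Int W_e(B_m(t))$ with the same gap $\delta$; thus, restricted to the complements, we never leave the hypotheses of the theorem. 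A \emph{realization block} is then immediate from the earlier compression results: to adjoin a field realizing $d_n(t)$, apply Theorem~\ref{T9_intro} (or Theorem~\ref{T5}, when moduli of continuity are to be tracked) to $B_m(t)-d_n(t)I$, which has $0\in\Int W_e$ and $\dist(0,\partial W_e(B_m(t)-d_n(t)I))\ge\delta$, with $Z=\CC$ and $D\equiv 0$; this produces a continuous unit vector field $y(t)\in\mathcal H$ with $\langle B_m(t)y(t),y(t)\rangle=d_n(t)$, and $v_{m+1}(t):=U_m(t)y(t)$ is the new field. The same device produces, inside any finite-corank subspace, a continuous unit field whose value is any prescribed point in the interior of $W_e$ with gap $\ge\delta$; this is the basic building block used below.

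The \emph{exhaustion block} is the substantial part and the step I expect to be the main obstacle. Fix a coordinate vector $e_k$ with current defect $\rho(t)=\|Q_m(t)e_k\|$; assuming $\max_t\rho(t)$ exceeds the target tolerance, set $w(t)=U_m(t)^*Q_m(t)e_k/\rho(t)\in\mathcal H$ (after subdividing $[0,1]$ or slightly shrinking $\mathcal H$, if needed, so that $Q_m(t)e_k$ never vanishes and $w$ is a genuine continuous unit field). The key is a \emph{continuous capture lemma}: for every $\eta>0$ there is $p=p(\delta,M,\eta)$ and continuous orthonormal $y_1,\dots,y_p\in C([0,1],\mathcal H)$ with $\langle B_m(t)y_i(t),y_i(t)\rangle=d_{m+i}(t)$ and $\dist\bigl(w(t),{\rm span}\{y_1(t),\dots,y_p(t)\}\bigr)<\eta$ for all $t$. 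Iteratively, writing $B=B_m(t)$, $d=d_{m+i}(t)$, and $c=\langle Bw_i,w_i\rangle$ for the current residual direction $w_i$ (with $w_1=w$ and $w_{i+1}$ the renormalized component of $w_i$ orthogonal to $y_i$; note $|d-c|\le 2M$, as $d,c\in\overline{W(A(t))}$), one looks for $y_i=\cos\beta\,w_i+\sin\beta\,e^{\mathrm i\phi}x$ with $x$ in the ample complement $\mathcal H\ominus{\rm span}\{y_1,\dots,y_{i-1},w_i\}$ and $\langle Bx,x\rangle$ placed on the ray from $c$ through $d$ a distance $\delta$ beyond $d$ (legitimate, by the building block). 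When $A$ is selfadjoint-valued a suitable phase $\phi$ annihilates the cross term $\cos\beta\sin\beta\bigl(e^{-\mathrm i\phi}\langle Bw_i,x\rangle+e^{\mathrm i\phi}\langle Bx,w_i\rangle\bigr)$ outright, the convex-combination identity gives $\cos^2\beta=\delta/(|d-c|+\delta)\ge\delta/(2M+\delta)$, and the claim follows with geometric decay of the residual — this is why the selfadjoint case is transparent. For general $A$ the cross term is in principle as large as $M$, so it cannot simply be ignored: the real work is to use the freedom in the choice of $x$ in the large space $\mathcal H\ominus{\rm span}\{\cdots\}$ together with $\phi$ so that the full complex equation $\langle By_i,y_i\rangle=d$ still has a solution with $\cos^2\beta$ bounded below by a constant depending only on $\delta$ and $M$, and — crucially — to make $x,\beta,\phi$ depend continuously on $t$ (where a compactness/selection or implicit-function argument enters, the locus in which $w_i$ is nearly an eigenvector of $B$ being handled separately, the cross term being then automatically small). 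Granting the lemma, $v_{m+i}(t):=U_m(t)y_i(t)$ realize the next $p$ prescribed values and reduce the defect of $e_k$ below $\eta$.

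Finally, interleave the two kinds of blocks so that $d_1,d_2,\dots$ are consumed in order (each exactly once) and $e_1,e_2,\dots$ are captured with tolerances $\eta_k\downarrow 0$. The limit fields $v_n\in C([0,1],H)$ are orthonormal, satisfy $\langle A(t)v_n(t),v_n(t)\rangle=d_n(t)$, and for each $t$ their span contains every $e_k$, hence is dense; so $(v_n(t))_{n=1}^\infty$ is an orthonormal basis. For the ``moreover'' statement: Theorem~\ref{T5} bounds the modulus of continuity of each realization field in terms of those of $A$ and $d_n$, the explicit estimates in the capture lemma do the same for the exhaustion blocks, and the transformation functions $U_m$ inherit their modulus from the fields already built; choosing the interleaving so that these contributions do not accumulate, equicontinuity of $(d_n)_{n=1}^\infty$ propagates to equicontinuity of $(v_n)_{n=1}^\infty$.
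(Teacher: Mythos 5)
Your overall architecture (interleave ``realization'' steps that consume the $d_n$ with ``exhaustion'' steps that pull a fixed dense sequence into the span, then pass to the limit) is the right shape and matches the paper in outline. But the proof has a genuine gap exactly where you yourself locate ``the real work'': the \emph{continuous capture lemma} for general, non-selfadjoint $A$ is stated, not proved. In your ansatz $y_i=\cos\beta\,w_i+\sin\beta\,e^{\mathrm i\phi}x$ the cross term $\cos\beta\sin\beta\bigl(e^{-\mathrm i\phi}\langle Bw_i,x\rangle+e^{\mathrm i\phi}\langle Bx,w_i\rangle\bigr)$ can be of order $M$, and you need \emph{simultaneously} (a) a solution of the full complex equation $\langle By_i,y_i\rangle=d$, (b) $\cos^2\beta$ bounded below by a constant depending only on $\delta$ and $M$ (your residual-decay argument requires capturing a definite fraction of $w_i$ at each step), and (c) continuous dependence of $x,\beta,\phi$ on $t$, including near the locus where $w_i$ is close to an eigenvector or where $Q_m(t)e_k$ nearly vanishes. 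None of these is established, and (b) is the source of the difficulty: insisting on a uniformly bounded-below overlap is what forces you to confront the large cross term head-on. The closing claim about equicontinuity (``choosing the interleaving so that these contributions do not accumulate'') is likewise unsubstantiated as written.

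The paper's proof resolves this by a different mechanism that you may find instructive to compare with. Instead of capturing a definite fraction of the residual direction $y_k(t)=(I-P_{H_{k-1}(t)})y_{r(k)}$ in one block, it adjoins only an $\eta_k$-small multiple of it: the new vector is $u_k^{(2)}=\sqrt{1-\eta_k^2\|y_k\|^2/b_k^2}\,u_k^{(1)}+\eta_k y_k/b_k$, where the dominant part $u_k^{(1)}$ lives in a finite-codimension subspace $\widetilde M_k$ chosen in advance (via Lemma~\ref{comp}) so that $|\langle A(t)y_k(t)/b_k(t),m\rangle|$ and $|\langle A^*(t)y_k(t)/b_k(t),m\rangle|$ are at most a prescribed $\delta_k$ for all unit $m\in\widetilde M_k$. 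The cross terms are then $O(\eta_k\delta_k)$ rather than $O(M)$, the $O(\eta_k^2)$ perturbation of the diagonal value is absorbed by pre-targeting the adjusted value $\widetilde d_k$ and post-correcting with Theorem~\ref{T12} (the exact-realization result built on Proposition~\ref{6.1}, which you would also need, since mixing destroys exactness). The price is that each basis vector $y_m$ must be revisited infinitely often, with weights satisfying $\sum_i\eta^2_{2^{m-1}(2i-1)}=\infty$ along the subsequence $k$ with $r(k)=m$, so that density of the span follows from the divergence of $\sum\eta_k^2$ (the product $\prod(1-\eta_{\cdot}^2/4)$ vanishes) rather than from geometric decay per block. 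This small-increment strategy is precisely what makes the non-selfadjoint case go through without ever solving the nonlinear equation you pose, and it also yields equicontinuity cleanly, since $\|v_k-u_k\|\to 0$ uniformly with $u_k$ controlled by Theorem~\ref{T5}. If you want to salvage your argument, you should either prove your capture lemma (which I expect to be at least as hard as the theorem itself in the non-selfadjoint case) or replace the exhaustion block by a small-increment scheme of this kind.
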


The next statement is a direct corollary of Theorem \ref{diagonal_intro}, which is worth noting.

\begin{corollary}\label{cor_diag_intro}
Let $A\in C([0,1], L(H))$ and   $d \in C([0,1], \ell^\infty(\mathbb N))$ satisfy
$$
W_e(A(t))\supset\{z:|z|\le \|d(t)\|_\infty+\theta\}
$$
for some $\theta>0$. Then there exists a continuous function $v:[0,1]\to\ell^\infty(\mathbb N, H)$
such that
$(v_n(t))_{n=1}^\infty$ is an orthonormal basis in $H$ for all $t\in [0,1]$ and
$$
\left \langle A(t)v_n(t),v_n(t)\right \rangle=d_n(t)
$$
for all $n \in \mathbb N$ and $t \in [0,1]$.
\end{corollary}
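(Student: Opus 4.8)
The plan is to deduce the corollary directly from Theorem~\ref{diagonal_intro}, applied to the coordinate functions of $d$. Writing $\pi_n\colon\ell^\infty(\mathbb N)\to\mathbb C$ for the $n$-th coordinate functional and setting $d_n:=\pi_n\circ d$, I would first record two elementary facts: each $d_n$ lies in $C([0,1])$ (since $\pi_n$ is linear and continuous), and $|d_n(t)|\le\|d(t)\|_\infty$ for every $t\in[0,1]$.

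Next I would verify the hypotheses of Theorem~\ref{diagonal_intro}. Fix $t\in[0,1]$ and put $\rho(t):=\|d(t)\|_\infty$. If $|z-d_n(t)|<\theta$, then $|z|\le|d_n(t)|+|z-d_n(t)|<\rho(t)+\theta$, so the open ball of radius $\theta$ about $d_n(t)$ is contained in $\{z:|z|<\rho(t)+\theta\}$, hence — by hypothesis — in $W_e(A(t))$. An open subset of $W_e(A(t))$ lies in $\Int W_e(A(t))$ and is disjoint from $\partial W_e(A(t))$, so $d_n(t)\in\Int W_e(A(t))$ and $\dist(d_n(t),\partial W_e(A(t)))\ge\theta$ for all $n\in\mathbb N$ and $t\in[0,1]$. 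In particular $\inf\{\dist(d_n(t),\partial W_e(A(t))):n\in\mathbb N,\ t\in[0,1]\}\ge\theta>0$, so Theorem~\ref{diagonal_intro} applies.

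It then remains to repackage the resulting family $(v_n)_{n=1}^\infty\subset C([0,1],H)$ — which satisfies $\langle A(t)v_n(t),v_n(t)\rangle=d_n(t)$ and has $(v_n(t))_{n=1}^\infty$ an orthonormal basis of $H$ for each $t$ — into a single continuous $\ell^\infty(\mathbb N,H)$-valued map. Here I would invoke the ``moreover'' clause of Theorem~\ref{diagonal_intro}: since $d\in C([0,1],\ell^\infty(\mathbb N))$ and $[0,1]$ is compact, $d$ is uniformly continuous, i.e.\ $\sup_n|d_n(t)-d_n(s)|\to0$ as $|t-s|\to0$, which is precisely equicontinuity of $(d_n)_{n=1}^\infty$; hence $(v_n)_{n=1}^\infty$ is equicontinuous, and equicontinuity of this family at a point $t_0$ says exactly that $\sup_n\|v_n(t)-v_n(t_0)\|_H\to0$ as $t\to t_0$, i.e.\ that $v:=(v_n)_{n=1}^\infty$ is continuous as a map $[0,1]\to\ell^\infty(\mathbb N,H)$. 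This yields the required $v$.

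I do not expect any real obstacle here: once Theorem~\ref{diagonal_intro} is in hand, the argument is pure bookkeeping. The only steps deserving care are the two dual translations — from $\ell^\infty$-valued continuity of $d$ to equicontinuity of $(d_n)_{n=1}^\infty$, and back from equicontinuity of $(v_n)_{n=1}^\infty$ to $\ell^\infty$-valued continuity of $v$ — and both rely solely on compactness of $[0,1]$.
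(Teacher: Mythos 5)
Your proof is correct and is exactly the derivation the paper intends: the corollary is stated there as a direct consequence of Theorem~\ref{diagonal_intro}, with the hypotheses verified precisely as you do (the ball $\{|z-d_n(t)|<\theta\}$ lies in $W_e(A(t))$, giving the uniform lower bound $\theta$ on the distance to the boundary) and the $\ell^\infty(\mathbb N,H)$-continuity of $v$ obtained from the equicontinuity clause. No gaps.
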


Our approach uses  several ideas from \cite{MT19}, \cite{MT_LMS} and \cite{MT_JFA}.
However, methods from these papers cannot be applied directly,
and new tools are required.
The proof of Theorem \ref{T9_intro} is based on
factorization of \eqref{compress}
into simpler germs,
where the regularity properties are easier to control,
and on inductive arguments adapted to the time-dependent
case. In the case of scalar and smooth $D$,
we employ a different argument depending on partitions
on unity.
To show Theorem \ref{diagonal_intro}, we combine the idea from \cite{MT19}
with a reasoning similar in spirit  to the proof of Theorem \ref{T9_intro},
and rely on inductive arguments again.

We give versions of our results for the important case when operators $A(t)$ are selfadjoint.
The case of selfadjoint $A$ received a special attention in the study of compressions and diagonals
due to its applications to frames theory, theory of von Neumann algebras and
majorization, and we find it useful to address it as well.
Since then ${\rm Int} \, W_e (A(t))=\emptyset$ for all $t \in [0,1]$ we have to adjust
our proofs and the corresponding formulations accordingly.
See Theorems \ref{t2self} and \ref{diagonalself} for counterparts of Theorems \ref{T9_intro}
and \ref{diagonal_intro} in this setting.

At the moment, it is not clear whether
Theorems \ref{T9_intro}, \ref{partition_intro} and \ref{diagonal_intro} can be shown
under Blaschke type assumptions of Theorem \ref{blash_th}.
We even do not know whether one can get a time-dependent variant of Bourin's formulation,
and we suspect that Blaschke type assumptions would probably require
additional new ideas.

We formulated Theorems~\ref{T9_intro} and~\ref{diagonal_intro}, as well as the related statements below, for continuous functions on the interval $[0,1]$. It is plausible that similar results hold for continuous functions on an arbitrary compact metric space. However, we restrict our attention to the unit interval in order to keep the complexity of the proofs and the length of the paper within reasonable limits.

\section{Notation and conventions}

Let $H$ be an infinite-dimensional complex separable Hilbert space, and
let $L(H)$ stand for the algebra of bounded linear operators on $H.$
The space of bounded linear operators between Hilbert spaces $K$ and $H$
is denoted by $L(K, H).$

For bounded unitarily equivalent linear operators $A$ and $B$ acting  on the corresponding Hilbert spaces we write
$A \usim B.$

For a metric space $G$ and a Banach space $X$ we denote by $C(G, X)$
the space of continuous functions on $G$ with values in $X.$ For $f\in C(G,X)$ we denote $Z(f)=\bigvee_{g\in G} f(g)$, where $\bigvee$ stands for the closed linear span.

Similarly, $C^r (\Omega, X)$ denotes the space of $X$-valued $r$-times continuously differentiable functions on an open subset $\Omega\subset\mathbb C.$

For $r>0,$ we let $\mathbb D_r:=\{z\in \mathbb C:|z|< r\},$ and write
$\mathbb D$ instead of $\mathbb D_1$ to follow the established convention.

The restriction of a linear operator $A \in L(H)$ to a subspace $M$ of $H$ is denoted by
$A{\upharpoonright}_M,$ and $P_M$ stands for the orthogonal projection from $H$ onto $M.$

For a subset $S$ of a Banach space, we denote by ${\rm conv}\, S$ its convex hull, by ${\rm Int} \, S$ its interior,
by $\overline{S}$ its closure and
 by $\partial S$ its topological boundary.

We deal only with \emph{separable complex} Hilbert spaces and we assume
that a subspace of a Hilbert space is always closed.

\section{Preliminaries}\label{prelim}

Our arguments will rely on the notions of essential numerical range and essential spectrum
of $A \in L(H).$
To put them into a proper context, recall that the numerical range of $A$ is defined
as
\[
W(A):=\left \{\langle Ax, x \rangle: x \in H,  \|x\|=1 \right \}.
\]
It is well-known that $W(A)$ is a convex subset of $\mathbb C$ and $\sigma(A) \subset \overline{W(A)}$.
At the same time, $W(A)$ may be neither closed, nor open, and its geometry may be highly complicated.
While $W(A)$ contains a lot of information on $A$ and would be a natural candidate to use
for the study of continuity properties of matrix representations of $A,$  it is highly unstable, even under rank-one perturbations.
This makes it difficult to use $W(A)$ and motivates one to invoke
counterparts of $W(A)$ more suitable for various inductive constructions.

Thus, we will use essential numerical range  of $A,$ having no such a drawback.
The set $W_e(A)$ can be considered as an approximate version of $W(A),$
and it can be defined in several equivalent ways.
Following one of them,  we define  the essential numerical range   $W_{e}(A)$  as the set of
all $\lambda \in\mathbb C$ such that there exists an orthonormal sequence $(u_n)_{n=1}^{\infty}\subset H$ with
$$
\lim_{n\to\infty}\langle  A u_n,
  u_n\rangle=\lambda.
$$
As shown in \cite{Stout81}, orthonormal sequences in this definition can be replaced by bases in $H.$
Recall that $W_{e}(A)$ is a compact and convex subset of $\overline{W(A)}$
and it is non-empty if $H$ is infinite-dimensional.
Moreover, $W_e(A+K)=W_e(A)$ for every compact operator $K.$
It is easy to show that for any $A,B \in L(H)$ one has
\[
\rho(W_e(B), W_e(A)) \le \|B-A\|,
\]
where $\rho$ stands for the Hausdorff metric on compact subsets of $\mathbb C.$
Thus the mapping
$ A \mapsto W_e(A)$ is continuous. Note that $W_e(A)$ is related to the spectral properties of $A,$
and, in particular, if $\sigma_e(A)$ stands for the essential spectrum
of $A$, then ${\rm conv} \sigma_e(A) \subset W_e(A).$

The next lemma will be  indispensable in our arguments and  be applied many times
in the sequel.
\begin{lemma}\label{codim}
Let $A \in L(H).$
\begin{itemize}
\item [(a)]  One has $\lambda \in W_{e}(A)$ if and only if
for every $\e>0$ and every  subspace $M\subset H$ of finite codimension
there is a unit vector $x\in M$ such that
\[
|\langle A x,x \rangle-\lambda|<\e.
\]
Thus,
\[
W_e(P_MA{\upharpoonright}_M)=W_e(A).
\]
\item [(b)]
If $\lambda \in\Int \, W_e(A),$
then for every subspace $M\subset H$ of finite codimension there is $x\in M$ such that $\|x\|=1$ and
$$
 \langle A x, x\rangle=\lambda.
$$
Moreover, there exists an infinite-rank orthogonal projection $P$
such that
\[
PAP=\lambda P.
\]
If $A=A^*,$ then the same properties hold for $\lambda \in \Int_{\mathbb R} W_e(A),$
where $\Int_{\mathbb R} W_e(A)$ stands for the interior of $W_e(A)$ relative to $\mathbb R.$
\end{itemize}
\end{lemma}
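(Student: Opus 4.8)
The plan is to establish (a) and (b) in turn; the role of (b) is to upgrade the \emph{approximate} attainment provided by (a) to an \emph{exact} one via a convexity argument, and then to iterate that exact attainment along a decreasing chain of finite-codimension subspaces.

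For part (a), consider first the forward implication. Let $(u_n)_{n=1}^\infty$ be an orthonormal sequence with $\langle Au_n,u_n\rangle\to\lambda$, and let $M\subset H$ have finite codimension $k$. Applying Bessel's inequality to an orthonormal basis of $M^\perp$ gives $\sum_n\|P_{M^\perp}u_n\|^2\le k$, so $\|P_{M^\perp}u_n\|\to0$ and $\|P_Mu_n\|\to1$; for $n$ large the vectors $v_n:=P_Mu_n/\|P_Mu_n\|$ are unit vectors in $M$ with $v_n-u_n\to0$, hence $\langle Av_n,v_n\rangle\to\lambda$, and any sufficiently late $v_n$ does the job. For the converse one builds the orthonormal sequence recursively: given orthonormal $u_1,\dots,u_n$, apply the hypothesis to the finite-codimension subspace $\{u_1,\dots,u_n\}^\perp$ with $\e=1/(n+1)$ to produce $u_{n+1}$; the resulting sequence witnesses $\lambda\in W_e(A)$. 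The identity $W_e(P_MA{\upharpoonright}_M)=W_e(A)$ then follows at once, since $M$ is infinite-dimensional (having finite codimension), $\langle(P_MA{\upharpoonright}_M)x,x\rangle=\langle Ax,x\rangle$ for $x\in M$, and a subspace of finite codimension in $M$ has finite codimension in $H$; so the characterization just proved, applied to $A$ on $H$ and to $P_MA{\upharpoonright}_M$ on $M$, yields both inclusions.

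For the first assertion of (b), fix a finite-codimension $M$ and put $B:=P_MA{\upharpoonright}_M$. By part (a), $\lambda\in\operatorname{Int}W_e(B)$. Recalling that $W(B)$ is convex and $W_e(B)\subseteq\overline{W(B)}$, and that $\operatorname{Int}\overline C=\operatorname{Int}C$ for every convex $C\subseteq\mathbb C$, we get
\[
\lambda\in\operatorname{Int}W_e(B)\subseteq\operatorname{Int}\overline{W(B)}=\operatorname{Int}W(B)\subseteq W(B),
\]
so there is a unit vector $x\in M$ with $\langle Ax,x\rangle=\langle Bx,x\rangle=\lambda$. (One may instead avoid the convexity identity by taking three points of $W_e(B)$ near $\lambda$ whose convex hull is a nondegenerate triangle with $\lambda$ in its interior, perturbing them slightly into $W(B)$ so that $\lambda$ remains in their convex hull, and using convexity of $W(B)$.) For the ``moreover'' part, construct an orthonormal sequence $(u_n)$ with $\langle Au_i,u_j\rangle=\lambda\delta_{ij}$ for all $i,j$: if orthonormal $u_1,\dots,u_n$ already enjoy this for indices $\le n$, then the conditions $u_{n+1}\perp u_j$, $u_{n+1}\perp Au_j$ and $u_{n+1}\perp A^*u_j$ for $j\le n$ confine $u_{n+1}$ to a subspace $M_{n+1}$ of codimension at most $3n$, and the first assertion applied to $A$ and $M_{n+1}$ supplies a unit $u_{n+1}\in M_{n+1}$ with $\langle Au_{n+1},u_{n+1}\rangle=\lambda$, restoring the property for indices $\le n+1$. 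Then the orthogonal projection $P$ onto the closed linear span of $\{u_n:n\in\mathbb N\}$ has infinite rank and $PAP=\lambda P$: writing $Px=\sum_nc_nu_n$ and $A(Px)=\sum_nc_nAu_n$ (using boundedness of $A$), we get $\langle A(Px),u_m\rangle=\sum_nc_n\langle Au_n,u_m\rangle=\lambda c_m$, hence $PA(Px)=\lambda\sum_mc_mu_m=\lambda Px$.

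Finally, in the selfadjoint case $W_e(A)\subseteq\mathbb R$ is a compact interval and $\operatorname{Int}_{\mathbb R}W_e(A)$ is its open interior. The argument goes through verbatim with $\mathbb C$ replaced by $\mathbb R$: $W(B)\subseteq\mathbb R$ is an interval, $W_e(B)=W_e(A)\subseteq\overline{W(B)}$, and an interval contains the interior of its closure, so $\lambda\in\operatorname{Int}_{\mathbb R}W_e(B)\subseteq W(B)$; moreover in the inductive step the conditions $u_{n+1}\perp Au_j$ and $u_{n+1}\perp A^*u_j$ coincide, so nothing else changes. I expect the one genuinely delicate point to be precisely this passage from approximate to exact attainment of $\lambda$, that is, verifying $\lambda\in W(B)$ and not merely $\lambda\in\overline{W(B)}$; the rest reduces to the definition of $W_e$ and routine finite-codimension bookkeeping via Bessel's inequality.
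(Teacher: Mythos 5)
Your proof is correct and self-contained; the paper itself does not prove this lemma but refers to \cite{MT_LMS}, \cite{MT_JFA} and \cite{MT19}, and your argument is essentially the one used there (Bessel's inequality for the finite-codimension reduction, and an inductive choice of vectors orthogonal to $u_j$, $Au_j$, $A^*u_j$ for the projection $P$ with $PAP=\lambda P$). The one genuinely delicate point — upgrading $\lambda\in\Int W_e(B)\subseteq\Int\overline{W(B)}$ to $\lambda\in W(B)$ — is handled correctly by the convexity identity $\Int\overline{C}=\Int C$ (valid for convex $C\subseteq\mathbb C$, including the degenerate case of empty interior), which is a cleaner packaging of the triangle-perturbation argument in the cited sources.
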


The proof of the first property is easy and can be found e.g. in \cite[Proposition 5.5]{MT_LMS}.
The second property is more involved,
see e.g. \cite[Corollary 4.5]{MT_JFA} and \cite[Proposition 4.1]{MT19} for its proof and other related statements.
The properties (a) and (b) are very useful in inductive constructions of
sequences in $H.$ In particular,
by absorbing all of the elements constructed after a finite number of induction steps
into a finite-dimensional subspace $F$,
one may still use $W_e(A)$ when dealing with vectors from $F^\perp.$
In particular, given $\lambda \in \Int W_e(A)$ one may find a unit vector $x \in F^\perp$ with $\langle Ax, x \rangle=\lambda.$
These properties will play a role  in this paper as well.
They will be often combined with the following simple observation.
\begin{lemma}\label{comp}
Let $K \subset H$ be a compact set.
Then for every subspace $M \subset H$ with ${\rm codim}\, M <\infty$ and every $\epsilon >0$ there exists
a subspace $L \subset M, {\rm codim} \, L<\infty$ such that for all $f \in L, \|f\|=1,$
and $x \in K$ one has
$|\langle x, f \rangle| \le \epsilon.$
\end{lemma}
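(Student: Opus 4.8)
The plan is to reduce the claim to a finite-dimensional approximation of $K$ followed by a routine codimension count.

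First I would exploit compactness of $K$: given $\epsilon>0$, cover $K$ by finitely many closed balls of radius $\epsilon$ with centres $x_1,\dots,x_m\in K$, so that for each $x\in K$ there is an index $i$ with $\|x-x_i\|\le\epsilon$. Then let $F:=\bigvee_{i=1}^m x_i$, a finite-dimensional (hence closed) subspace with $\dim F\le m$, and set $L:=M\cap F^{\perp}$. Clearly $L\subset M$, and since both $M$ and $F^{\perp}$ have finite codimension in $H$ (with $\codim F^{\perp}=\dim F$), so does their intersection; concretely $\codim L\le\codim M+\dim F<\infty$, using the elementary fact that $\codim(M_1\cap M_2)\le\codim M_1+\codim M_2$ for subspaces $M_1,M_2\subset H$.

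It then remains to verify the estimate. For $f\in L$ with $\|f\|=1$ and $x\in K$, pick $i$ with $\|x-x_i\|\le\epsilon$; since $f\in F^{\perp}$ and $x_i\in F$, we have $\langle x_i,f\rangle=0$, whence by the Cauchy--Schwarz inequality
\[
|\langle x,f\rangle|=|\langle x-x_i,f\rangle|\le\|x-x_i\|\,\|f\|\le\epsilon,
\]
which is exactly the asserted bound. There is essentially no obstacle in this lemma; the only point worth a word is the codimension estimate for $L$, which is standard linear algebra.
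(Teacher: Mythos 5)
Your argument is correct and coincides with the paper's own proof: both take a finite $\e$-net $\{x_1,\dots,x_m\}$ for $K$, set $L=M\cap\bigl(\bigvee_{i} x_i\bigr)^\perp$, and conclude via Cauchy--Schwarz applied to $x-x_i$. The codimension count you spell out is the only point the paper leaves implicit, and it is handled correctly.
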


\begin{proof}
Let $\{x_k: 1 \le k \le n\}$ be a finite $\e$-net for $K$.
If $L=M\cap \left(\bigvee_{k=1}^{n} x_k \right)^\perp,$
then $L \subset M, {\rm codim}\, L <\infty,$ and for every $f \in L$ with $\|f\|=1,$
\[
\sup_{x \in K}|\langle x, f \rangle|= \sup_{x \in K}\min_{1 \le k \le n} |\langle x-x_k, f \rangle| \le \epsilon.
\]
\end{proof}

The next result, crucial for our studies, is a particular case of \cite[Theorem 2.1]{Bourin}
mentioned in the introduction.
It will be of substantial use, and is thus formulated
as a separate statement.

\begin{theorem}\label{bourin}
Let $A \in L(H)$ be such that $W_e(A) \supset \mathbb D_1.$
Assume that $X$ is a separable Hilbert space, and $D\in L(X)$ satisfies $\|D\|<1$.
Then there exists a subspace $M\subset H$ such that the compression $P_MA{\upharpoonright}_M$ is unitarily equivalent to $D$.
\end{theorem}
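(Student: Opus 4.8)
\emph{Proof sketch.} The quickest route — and the one I would take — is to read the statement off Theorem~\ref{blash_th}\,(ii). Put $H_1:=X$ and $D_1:=D$, and for every $n\ge 2$ put $H_n:=\mathbb C$ and $D_n:=0$. Then each $D_n$ is a strict contraction and
\[
\sum_{n=1}^{\infty}\bigl(1-\|D_n\|\bigr)\ \ge\ \sum_{n=2}^{\infty}1\ =\ \infty,
\]
so Theorem~\ref{blash_th}\,(ii), applicable since $W_e(A)\supset\mathbb D_1$, produces mutually orthogonal subspaces $M_n\subset H$ with $\bigoplus_{n\ge1}M_n=H$ and $P_{M_n}A{\upharpoonright}_{M_n}\usim D_n$ for all $n$. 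Taking $M:=M_1$ gives $P_MA{\upharpoonright}_M\usim D$, which is what we want.

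Since Theorem~\ref{bourin} is Bourin's result, it is nevertheless instructive to describe a self-contained proof. The plan is to construct an isometry $V\in L(X,H)$ with $V^*AV=D$ and take $M:=\operatorname{ran}V$. Fix an orthonormal basis $(f_n)_{n\ge1}$ of $X$; writing $x_n:=Vf_n$, the goal becomes an orthonormal sequence $(x_n)_{n\ge1}\subset H$ with $\langle Ax_j,x_i\rangle=\langle Df_j,f_i\rangle$ for all $i,j$, which I would build by induction on $n$. At step $n$ let $G_n:=\bigvee\{x_i,Ax_i,A^*x_i:i<n\}$, a finite-dimensional subspace. The relations $x_n\perp x_i$, $\langle Ax_n,x_i\rangle=\langle Df_n,f_i\rangle$ and $\langle Ax_i,x_n\rangle=\langle Df_i,f_n\rangle$ for $i<n$ amount to prescribing all inner products of $x_n$ with vectors of $G_n$, hence determine its component $\gamma_n:=P_{G_n}x_n$ (one keeps this prescription consistent by maintaining the vectors $\{x_i,Ax_i,A^*x_i\}$ in general position, which is possible because each newly chosen $x_i$ varies over a large subset of an infinite-dimensional sphere). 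Writing $x_n=\gamma_n+\xi_n$ with $\xi_n\in G_n^\perp$ and imposing the two further finite-codimensional restrictions $\xi_n\perp A\gamma_n$ and $\xi_n\perp A^*\gamma_n$, the cross terms in $\langle Ax_n,x_n\rangle=\langle A\gamma_n,\gamma_n\rangle+\langle A\gamma_n,\xi_n\rangle+\langle A\xi_n,\gamma_n\rangle+\langle A\xi_n,\xi_n\rangle$ both vanish; it then remains to pick a unit vector $\eta$ in the finite-codimensional subspace $N:=G_n^\perp\cap(A\gamma_n)^\perp\cap(A^*\gamma_n)^\perp$ with $\langle A\eta,\eta\rangle=\rho_n^{-2}\bigl(\langle Df_n,f_n\rangle-\langle A\gamma_n,\gamma_n\rangle\bigr)$, where $\rho_n^2:=1-\|\gamma_n\|^2$, and to set $\xi_n:=\rho_n\eta$. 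Such $\eta$ is furnished by Lemma~\ref{codim}\,(b) (with the subspace there taken to be $N$) as soon as the prescribed value lies in $\mathbb D_1\subset\Int W_e(A)$.

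Hence the heart of the direct argument — and the only place where the strict inequalities $\|D\|<1$ and $\mathbb D_1\subset\Int W_e(A)$ are really needed — is the estimate
\[
\bigl|\langle Df_n,f_n\rangle-\langle A\gamma_n,\gamma_n\rangle\bigr|\ <\ 1-\|\gamma_n\|^2,
\]
which in particular forces $\|\gamma_n\|<1$. I would secure it by arranging the induction so that $\gamma_n$ is the $G_n$-component of a genuine partial realisation and the Gram matrices entering the definition of $\gamma_n$ stay uniformly well conditioned; this bounds $\|\gamma_n\|$ in terms of $\|Df_n\|$ and $\|D^*f_n\|\le\|D\|<1$, and the leftover slack $1-\|D\|^2>0$ then covers the diagonal correction. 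This bookkeeping is the one genuinely delicate part of the direct route; in the presence of Theorem~\ref{blash_th}\,(ii), however, the first paragraph already settles the matter.
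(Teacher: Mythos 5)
The paper offers no proof of Theorem~\ref{bourin} to compare against: it is quoted as a special case of \cite[Theorem 2.1]{Bourin} and used as a black box. Your first paragraph --- padding $D$ with countably many zero operators on one-dimensional spaces so that the Blaschke sum diverges, then applying Theorem~\ref{blash_th}\,(ii) and keeping $M=M_1$ --- is formally correct within the logical structure of the paper, since both statements are imported from the literature. But it is circular in substance: Theorem~\ref{blash_th}\,(ii) is the authors' later generalization of Bourin's theorem, proved in \cite{MT19} by an elaboration of the same circle of ideas, so you are deducing the 2003 result from its 2019 strengthening.

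Your self-contained sketch has a genuine gap exactly where you flag it, and the proposed repair does not work. The inequality $|\langle Df_n,f_n\rangle-\langle A\gamma_n,\gamma_n\rangle|<1-\|\gamma_n\|^2$ carries the entire content of the theorem, and nothing in the construction delivers it. The component $\gamma_n=P_{G_n}x_n$ is forced by the equations $\langle x_n,A^*x_i\rangle=\langle Df_n,f_i\rangle$ and $\langle x_n,Ax_i\rangle=\overline{\langle Df_i,f_n\rangle}$; to hit a nonzero target when the component of $A^*x_i$ inside the space still available for $x_n$ is small, $x_n$ must carry a correspondingly large component in that direction, so $\|\gamma_n\|$ is governed by inverses of quantities you do not control. ``General position'' yields linear independence of $\{x_i,Ax_i,A^*x_i\}$, not uniform conditioning of their Gram matrices, and the conditioning degrades as $n$ grows. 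Moreover, even if one had $\|\gamma_n\|\le\|D\|$, the term $\langle A\gamma_n,\gamma_n\rangle$ is only bounded by $\|A\|\,\|\gamma_n\|^2$ with $\|A\|$ arbitrary, so the slack $1-\|D\|^2$ need not cover the diagonal correction. A direct proof that actually closes is the stationary version of the paper's own factorization: realize a diagonal operator of norm $<1$ as a compression of $A$ (here the entrywise induction does work, because all off-diagonal targets vanish and each new vector can be taken orthogonal to all previous $x_i$, $Ax_i$, $A^*x_i$ by Lemmas~\ref{codim} and~\ref{comp}); realize any unitary as a compression of a diagonal operator of norm $\le 1+\e$ via Weyl--von Neumann and Lemma~\ref{pokrzywa}; realize $D$ as a compression of the bilateral shift by the explicit dilation isometry of Proposition~\ref{P8}; and compose, exactly as in the proof of Theorem~\ref{T9_intro} specialized to constant functions.
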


Finally, we will introduce the  notation  pertaining to regularity properties of operator-valued functions
and relevant for our approach.
Given Hilbert spaces $X$ and $H,$ for $V\in C([0,1], L(X,H)),$ write
$$
\Lip(V):=\sup\left\{\frac{\|V(t')-V(t)\|}{|t'-t|}: \,  0\le t,t'\le 1, t'\ne t\right\}.
$$

Recall that a modulus of continuity is a non-decreasing function $\delta:[0,\infty)\to[0,\infty)$ such that $\delta(0)=\lim_{s\to 0_+}\delta(s)=0$.

 If $M$ and $M'$ are metric spaces, then a function $f:M\to M'$ is said to admit $\delta$ as a modulus of continuity if $\dist(f(m),f(m'))\le \delta(\dist(m,m'))$ for all $m,m'\in M$.
A set of functions $f_n:M\to M', \ n \in \mathbb N,$  is equicontinuous if there exists a modulus of continuity $\delta$ such that all $f_n$ admit it as a modulus of continuity.

If $M$ is compact and  $f$ is continuous, then $f$ admits the modulus of continuity $\delta_f$ defined by
$$
\delta_f(\e)=\max\{\dist(f(m),f(m')): \dist(m,m')\le\e\}.
$$
Note that if $f_1,\dots,f_k:M\to M'$ are continuous functions then $\delta:=\max\{\delta_{f_1},\dots,\delta_{f_k}\}$ is a modulus of continuity for each of $f_1,\dots,f_k$.

\section{Compressions of operator-valued functions}\label{compr}

In the following we let $H$ be an infinite-dimensional separable Hilbert space.
In this section we will prove Theorem \ref{T9_intro} as well as related statements of independent interest.
The proof of Theorem \ref{T9_intro} relies on the factorization of \eqref{compress} into
three similar relations separating time-dependent left and right hand sides of \eqref{compress}.
First, we construct a stationary, diagonal compression $D_{{\rm diag}}$ of $A(t)$  depending continuously
on $t$, see Proposition \ref{P6}. Second, we show that the bilateral shift $U$  (of infinite multiplicity) can be compressed continuously
to $D(t), t \in [0,1]$, see Proposition \ref{P8}. It remains to find $U$ among compressions of $D_{{\rm diag}}$, see Proposition \ref{P7}.
However, realization of this scheme is rather involved, and it will be divided into several incremental steps.

We start with constructing stationary diagonal compressions of $A,$ and to this aim the next simple lemma will be crucial.
\begin{lemma}\label{L1}
Let $A\in L(H)$, and assume that
$\mathbb D_r \subset W_e(A)$ for some $r>0$.
 Let $X$ be a separable Hilbert space and $D\in L(X)$ satisfy
$\|D\|<r$. If $L\subset H$ is a subspace of finite codimension, then there exists
a subspace $M\subset L$ such that the compression $P_MA{\upharpoonright}_M$ is unitarily equivalent to $D$. Equivalently, there exists an isometry $V:X\to H$ with ${\rm Im}\, V=M$ such that $V^*AV=D$.
\end{lemma}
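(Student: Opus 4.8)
The plan is to reduce everything to Theorem~\ref{bourin} applied inside $L$. First I would replace $A$ by its compression to $L$: set $B:=P_LA{\upharpoonright}_L\in L(L)$. Since $\codim L<\infty$, Lemma~\ref{codim}(a) gives $W_e(B)=W_e(A)$, so $\mathbb D_r\subset W_e(B)$. The finite-codimension hypothesis is used here and nowhere else.

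Next I would rescale to land exactly in the setting of Theorem~\ref{bourin}. For any scalar $c$ one has $W_e(cB)=c\,W_e(B)$, directly from the defining orthonormal sequences. Hence $r^{-1}B\in L(L)$ satisfies $W_e(r^{-1}B)=r^{-1}W_e(B)\supset r^{-1}\mathbb D_r=\mathbb D_1$, while $\|r^{-1}D\|<1$ by hypothesis. Applying Theorem~\ref{bourin} to $r^{-1}B$ and $r^{-1}D$ (both acting on separable Hilbert spaces) produces a subspace $M\subset L$ with $P_M(r^{-1}B){\upharpoonright}_M\usim r^{-1}D$, and multiplying the implementing unitary relation by $r$ gives $P_MB{\upharpoonright}_M\usim D$.

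It remains to check that the double compression is a single compression. Since $M\subset L$ we have $P_MP_L=P_M$, so for $x\in M$, $P_MBx=P_MP_LAP_Lx=P_MAx$, i.e. $P_MB{\upharpoonright}_M=P_MA{\upharpoonright}_M$; therefore $P_MA{\upharpoonright}_M\usim D$, which is the first assertion. For the equivalent reformulation, pick a unitary $U:X\to M$ with $U^*(P_MA{\upharpoonright}_M)U=D$ and put $V:=\iota_MU:X\to H$, where $\iota_M$ is the inclusion of $M$ into $H$; then $V$ is an isometry with $\mathrm{Im}\,V=M$ and $V^*AV=U^*P_MA{\upharpoonright}_MU=D$.

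I do not expect any genuine obstacle: the lemma is essentially Theorem~\ref{bourin} relativized to a prescribed subspace of finite codimension, the only points requiring care being the scaling invariance $W_e(cB)=c\,W_e(B)$ and the identity $P_MB{\upharpoonright}_M=P_MA{\upharpoonright}_M$ for $M\subset L$. What makes the statement useful downstream is precisely that $M$ may be chosen inside an arbitrary finite-codimension $L$, since this is what will allow previously constructed vectors to be absorbed into $L^\perp$ in the inductive constructions that follow.
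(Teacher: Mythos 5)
Your proposal is correct and follows essentially the same route as the paper: pass to $P_LA{\upharpoonright}_L$, invoke Lemma~\ref{codim}(a) to keep the essential numerical range, rescale by $r^{-1}$, and apply Theorem~\ref{bourin}. The extra verifications you include (that $P_M(P_LA{\upharpoonright}_L){\upharpoonright}_M=P_MA{\upharpoonright}_M$ for $M\subset L$, and the isometry reformulation) are correct details the paper leaves implicit.
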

\begin{proof}
By Lemma \ref{codim}, (a),
$$
W_e(P_LA{\upharpoonright}_L)=W_e(A)\supset \mathbb D_r.
$$
Hence $W_e(r^{-1}P_LA{\upharpoonright}_L)\supset\mathbb D_1$ and, by Theorem \ref{bourin},
there exists a subspace $M\subset L$ such that $r^{-1}P_M A{\upharpoonright}_M\usim r^{-1}D$. So $P_MA{\upharpoonright}_M$ is unitarily equivalent to $D$.
\end{proof}

The construction of stationary compressions for $A$ will be achieved via an approximation argument involving inductive
construction. To be able to implement the induction
step we need several preparations.
First we prove an approximate version of \eqref{compress} for finite-dimensional $D$,
with the modulus of continuity $\delta_V$  controlled by
$\delta_A$ and $\delta_D$.

\begin{lemma}\label{L2}
Let $A \in C([0,1], L(H))$ satisfy
$0\in{\rm Int}\, W_e(A(t))$ for all $t\in[0,1].$
Let $X$ be a finite-dimensional Hilbert space and $D \in C([0,1], L(X))$
be such that
\[ \|D(t)\|<\dist(0,\partial W_e(A(t)))\]
 for all $t\in [0,1]$.
Then for every $\e>0$ and every subspace $L \subset H$ of finite codimension there exists
 $V \in C([0,1], L(X,H))$ such that:
\begin{itemize}
\item[(i)] $V(t)$ is an isometry for all $t\in [0,1]$;
\item[(ii)] $\Lip(V)<\infty,$ and $\Lip(V)$ depends only on $\e$ and the moduli of continuity $\delta_A$ and $\delta_D$.
\item[(iii)]  $Z(V) \subset L$ and $\dim Z(V)<\infty$;
\item[(iv)] $\bigl\| V^*(t)A(t)V(t)-D(t)\bigr\|\le\e$ for all $t\in[0,1]$;

\end{itemize}
\end{lemma}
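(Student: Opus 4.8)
The plan is to reduce to the stationary Lemma~\ref{L1} by freezing $A$ and $D$ at finitely many, equally spaced time-nodes and joining consecutive nodes by an explicit rotation of isometries. Write $d(t):=\dist(0,\partial W_e(A(t)))$; since $W_e(A(t))$ is compact, convex and contains $0$ in its interior, $\mathbb D_{d(t)}\subset W_e(A(t))$, so the hypothesis $\|D(t)\|<d(t)$ places us in the setting of Lemma~\ref{L1} at each fixed $t$. Fix $N\in\mathbb N$ with $\delta_A(1/N)<\e/4$ and $\delta_D(1/N)<\e/4$ --- this being the \emph{only} constraint on $N$, so that every quantity built below depends on $\e,\delta_A$ and $\delta_D$ alone --- and set $t_j:=j/N$, $j=0,\dots,N$.

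The first real step is an inductive construction of isometries $V_j:X\to H$, $j=0,\dots,N$, with $M_j:=\Im V_j$, such that: (a) $V_j^*A(t_j)V_j=D(t_j)$; (b) $M_j\subset L$; and (c) for $j\ge1$,
\[
M_j\;\perp\;M_{j-1}+A(t_j)M_{j-1}+A(t_j)^*M_{j-1}.
\]
For $j=0$ one applies Lemma~\ref{L1} with $r=d(t_0)$ and ambient subspace $L$. For $j\ge1$ one notes first that $M_{j-1}+A(t_j)M_{j-1}+A(t_j)^*M_{j-1}$ is finite-dimensional ($X$ being finite-dimensional), so that $L_j:=L\cap\bigl(M_{j-1}+A(t_j)M_{j-1}+A(t_j)^*M_{j-1}\bigr)^\perp$ still has finite codimension, and then applies Lemma~\ref{L1} with $r=d(t_j)$ and ambient subspace $L_j$.

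Next I would interpolate: on $[t_{j-1},t_j]$ set $\theta(t):=\tfrac{\pi}{2}\,\tfrac{t-t_{j-1}}{t_j-t_{j-1}}$ and
\[
V(t):=\cos\theta(t)\,V_{j-1}+\sin\theta(t)\,V_j .
\]
Because $M_{j-1}\perp M_j$, each $V(t)$ is an isometry; the two formulas agree at every $t_j$ (both equal $V_j$), so $V\in C([0,1],L(X,H))$, and $Z(V)\subset M_0+\dots+M_N\subset L$ is finite-dimensional --- this gives (i) and (iii). The same orthogonality yields $\|V(t)-V(s)\|\le|\theta(t)-\theta(s)|$, hence, telescoping over the subintervals, $\Lip(V)\le\tfrac{\pi N}{2}$, which is (ii). For (iv) one expands, on $[t_{j-1},t_j]$,
\[
V(t)^*A(t)V(t)=\cos^2\theta\,V_{j-1}^*A(t)V_{j-1}+\sin^2\theta\,V_j^*A(t)V_j+\cos\theta\sin\theta\bigl(V_{j-1}^*A(t)V_j+V_j^*A(t)V_{j-1}\bigr),
\]
and treats the three blocks: by (c), $V_{j-1}^*A(t_j)V_j=0=V_j^*A(t_j)V_{j-1}$, so the last block equals $\cos\theta\sin\theta$ times $V_{j-1}^*(A(t)-A(t_j))V_j+V_j^*(A(t)-A(t_j))V_{j-1}$ and has norm at most $\|A(t)-A(t_j)\|<\e/4$; and by (a), for $i\in\{j-1,j\}$,
\[
\|V_i^*A(t)V_i-D(t)\|\le\|A(t)-A(t_i)\|+\|D(t)-D(t_i)\|<\tfrac{\e}{2},
\]
so the diagonal block differs from $D(t)=\cos^2\theta\,D(t)+\sin^2\theta\,D(t)$ by less than $\e/2$. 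Adding up, $\|V(t)^*A(t)V(t)-D(t)\|<\e$.

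I expect the cross term $V_{j-1}^*A(t)V_j$ to be the only real difficulty: merely arranging $M_{j-1}\perp M_j$ leaves it as large as $\|A\|$, since $A$ need not respect the decomposition $M_{j-1}\oplus M_j$. The remedy is to force $M_j$ also into the orthogonal complement of the finite-dimensional space $A(t_j)M_{j-1}+A(t_j)^*M_{j-1}$ --- costing nothing, as this is still a finite-codimension ambient subspace for Lemma~\ref{L1} --- which makes both cross terms vanish at the frozen time $t_j$, leaving only the controllable oscillation $A(t)-A(t_j)$. With that in place the Lipschitz bound needs no further work, because the mesh $1/N$ was chosen purely in terms of $\delta_A$ and $\delta_D$.
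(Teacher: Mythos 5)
Your proposal is correct and follows essentially the same route as the paper's proof: an equispaced partition with mesh determined only by $\e$, $\delta_A$, $\delta_D$, application of Lemma~\ref{L1} at the nodes inside a finite-codimension subspace chosen to kill the cross terms, and the $\cos/\sin$ interpolation between consecutive isometries. The only (immaterial) difference is that you orthogonalize $M_j$ against $A(t_j)M_{j-1}$ and $A(t_j)^*M_{j-1}$, both frozen at $t_j$, whereas the paper uses $A(t_{j-1})V(t_{j-1})X$ and $A^*(t_j)V(t_{j-1})X$; either choice makes the cross terms vanish at the node and leaves only the oscillation of $A$, so the estimates close in the same way.
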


\begin{proof}
Let $k$ be the smallest positive integer such that
$$
\|A(t')-A(t)\|\le\e/4
$$
and
$$
\|D(t')-D(t)\|\le\e/2
$$
whenever $t,t'\in[0,1]$, $|t-t'|\le k^{-1}$.

Set
\[
t_j= j/k, \qquad 0 \le j \le k.
\]

By Lemma \ref{L1}, there exists an isometry $V(t_0):X\to H$ satisfying $V^*(t_0)A(t_0)V(t_0)=D(t_0)$
and ${\rm Im}\, V(t_0) \subset L$.

We construct isometries $V(t_j), j=1,\dots,k,$ such that $V^*(t_j)A(t_j)V(t_j)=D(t_j)$
by induction on $j$.

Let $j\ge 1$ and suppose that the isometries $V(t_0),\dots,V(t_{j-1}):X\to H$ have already been constructed. Let
$$
M_j=\bigl(V(t_{j-1})X\cup A(t_{j-1})V(t_{j-1})X\cup A^*(t_j)V(t_{j-1})X\bigr)^\perp,
$$
and note that $\codim M_j<\infty$. By Lemma \ref{L1}, there exists an isometry $V(t_j):X\to H$ such that
$V^*(t_j)A(t_j)V(t_j)=D(t_j)$ and ${\rm Im}\, V(t_j) \subset L\cap M_j.$ The constructed isometries $V(t_0),\dots, V(t_k)$ satisfy
$$
V(t_{j+1})X \perp V(t_{j})X, A(t_j)V(t_j)X , A^*(t_{j+1})V(t_j)X
$$
for all $j=0,1,\dots, k-1$.

For $j=0,1,\dots,k-1,$ define a function $f_j:[t_j,t_{j+1}]\to[0,\pi/2]$ by
$$
f_j(t):=\frac{\pi}{2}\cdot\frac{t-t_j}{t_{j+1}-t_j}=\frac{k\pi(t-t_j)}{2}, \qquad t \in [t_{j}, t_{j+1}].
$$
Then, for $t_j\le t\le t_{j+1},$ set
$$
V(t)=V(t_j)\cos (f_j(t)) +V(t_{j+1})\sin (f_j(t)), \qquad t \in [t_{j}, t_{j+1}].
$$
Clearly, ${\rm Im}\, V(t) \subset L.$ Note that for every $x\in X,$ we have $V(t_j)x\perp V(t_{j+1})x$, and so
$$
\|V(t)x\|^2=\cos^2 (f_j(t))\|V(t_j)x\|^2+\sin^2 (f_j(t)) \|V(t_{j+1})x\|^2=\|x\|^2, \qquad t \in [0,1].
$$
Hence $V(t)$ is an isometry for each $t\in [0,1],$ and (i) holds.

By construction, the function $V:[0,1]\to L(X,H)$ is norm-continuous.
Moreover, in each interval $(t_j,t_{j+1}),$ $V$ is differentiable and
$$
\frac{{\rm d} V}{{\rm d} t}=
-V(t_j)\sin (f_j(t))\frac{k\pi}{2}+V(t_{j+1})\cos (f_j(t))\frac{k\pi}{2}
$$
with
$$
\Bigl\|\frac{{\rm d} V}{{\rm d} \, t}\Bigr\|\le
\frac{k\pi (|\sin (f_j(t))|+|\cos (f_j(t))|)}{2}\le
k\pi.
$$
Hence $\|V(t')-V(t)\|\le k\pi |t'-t|$ for all $t,t'\in [0,1],$ and  $\Lip(V)\le k\pi.$
In particular,  $\Lip(V)$ is finite and depends only on $\e$ and moduli of continuity $\delta_A$ and $\delta_D,$
implying (ii).

Clearly
 \begin{align*}
\dim Z(V)=&\dim\bigvee\{V(t)X:0\le t\le 1\}\\
=&
\dim\bigvee\{V(t_j)X:j=0,1,\dots,k\}<\infty,
\end{align*}
and (iii) follows.

It remains to prove (iv).
For all $t_j\le s\le t_{j+1}$ and $x,x'\in X$ such that $\|x\|=\|x'\|=1$
 we have
\begin{align*}
&\left |\left \langle V^*(s)A(s)V(s)x-D(s)x, x'\right\rangle \right | \\
=&
\left|\left\langle A(s)V(t_j)x\cdot\cos (f_j(s))+A(s)V(t_{j+1})x\cdot\sin (f_j(s)),
V(s)x'\right \rangle
-\left \langle D(s)x,x'\right \rangle \right|\\
\le&
\Bigl|\left\langle \cos (f_j(s)) A(t_j)V(t_j)x,V(s)x'\right\rangle +\left\langle \sin (f_j(s)) A(t_{j+1})V(t_{j+1})x,
V(s)x'\right\rangle\\
-&\langle D(s)x,x'\rangle\Bigr|+\|A(s)-A(t_j)\|\cdot|\cos (f_j(s))|+
\|A(s)-A(t_{j+1})\|\cdot|\sin (f_j(s))|\\
\le&
\Bigl|\cos^2 (f_j(s))\Bigl\langle A(t_j)V(t_j)x, V(t_j)x'\Bigr\rangle+
\sin^2 f_j(s)\Bigl\langle A(t_{j+1})V(t_{j+1})x, V(t_{j+1})x'\Bigr\rangle\\
-&\langle D(s)x,x'\rangle\Bigr|
+\e/2\\
\le&
\left|\cos^2 (f_j(s))\bigl\langle (D(t_j)-D(s))x,x'\bigr\rangle+\sin^2 (f_j(s))
\bigl\langle (D(t_{j+1})-D(s))x,x'\bigr\rangle\right|+\e/2\\
\le&
\cos^2 (f_j(s))\|D(t_j)-D(s)\|+\sin^2 (f_j(s))\|D(t_{j+1})-D(s)\|+\e/2
\le\e,
\end{align*}
and (iv) follows.
\end{proof}

Now we show how to improve our approximation of $D$
by choosing an appropriate compression of $A$.
Apart from making approximation sharper, we keep control over the continuity of compression.

\begin{proposition}\label{P4}
Let $A \in C([0,1], L(H)).$
For $\theta>0$ and a finite-dimensional Hilbert space $X,$
let
$D\in C([0,1], L(X))$ be
such that
$$
W_e(A(t))\supset\{z: |z|\le \|D(t)\|+\theta\}, \qquad t \in [0,1].
$$
Given $\e\in (0,\theta),$ a subspace $L \subset H$ with ${\rm codim} \, L<\infty,$ and $l>0,$ assume that
an isometry-valued $V \in C([0,1], L(X,H))$ is such that
\[
Z(V) \subset L, \quad \dim Z(V)<\infty,
\quad \Lip(V)\le l,
\]
and
$$
\left\|V^*(t)A(t)V(t)-D(t)\right\|\le \e, \qquad t \in [0,1].
$$
Then for every $\e'>0$
there exist $\tilde l>0$ (depending only on $l$, $\e'$ and the moduli of continuity $\delta_A$ and $\delta_D$)
and an isometry-valued
$\widetilde V \in C([0,1], L(X,H))$ satisfying
\[
 Z(\tilde V) \subset L,
\quad \dim Z(\widetilde V)<\infty, \quad \Lip(\tilde V)\le \tilde l,
\]
\begin{equation}\label{ct}
\left\| \widetilde V^*(t)A(t)\widetilde V(t)-D(t)\right\|\le \e', \qquad t \in [0,1],
\end{equation}
and
\begin{equation}\label{cont}
\left \|\widetilde V(t)-V(t) \right\|\le \sqrt{\frac{2\e}{\theta}}, \qquad t \in [0,1].
\end{equation}
\end{proposition}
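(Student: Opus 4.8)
The plan is to improve the approximation quality from $\e$ to $\e'$ by a single application of the stationary refinement Lemma~\ref{L1}, carried out on a mesh that is fine enough relative to $\e'$ and to the moduli of continuity $\delta_A$, $\delta_D$. The key point is that the error $V^*(t)A(t)V(t)-D(t)$, while only of size $\e$, is itself a \emph{finite-dimensional} continuous operator function (it lives on the fixed finite-dimensional space $X$, and $Z(V)$ is finite-dimensional), so there is room to absorb it by enlarging $Z(V)$ inside $L$ and compressing $A$ to the orthogonal complement. Concretely, I would fix a mesh $0=s_0<s_1<\dots<s_N=1$ with $\|A(s)-A(s_i)\|$, $\|D(s)-D(s_i)\|$, and $\|V(s)-V(s_i)\|$ all at most a prescribed small multiple of $\e'$ on $[s_{i-1},s_i]$, the last one using $\Lip(V)\le l$ so that the required $N$ depends only on $l$, $\e'$, $\delta_A$, $\delta_D$. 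On the residual subspace $M_i:=(Z(V)\cup A(s_i)Z(V)\cup A^*(s_i)Z(V))^\perp\cap L$, which has finite codimension, Lemma~\ref{L1} applied to $r=\|D(s_i)\|+\theta$ and the error operator of norm $\le\e<\theta$ (so $\|{\rm error}\|<r$ automatically, and more importantly the \emph{correction} we need is small) produces an isometry $W(s_i):X\to M_i$ with $W^*(s_i)A(s_i)W(s_i)$ equal to the needed correction term $D(s_i)-V^*(s_i)A(s_i)V(s_i)$, whose norm is $\le\e$.

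Next I would form the corrected isometry at the mesh points by a rotation in the (orthogonal!) pair of $X$-copies spanned by $\operatorname{Im}V(s_i)$ and $\operatorname{Im}W(s_i)$: set $\widetilde V(s_i)=\cos\alpha\, V(s_i)+\sin\alpha\, W(s_i)$ with a small angle $\alpha$ chosen so that $\sin^2\alpha=\e/\theta$ (or a comparable quantity), which makes $\widetilde V(s_i)$ an isometry since the two ranges are orthogonal, gives $\|\widetilde V(s_i)-V(s_i)\|\le\sqrt{2(1-\cos\alpha)}\le\sqrt{2\e/\theta}$, and yields
\[
\widetilde V^*(s_i)A(s_i)\widetilde V(s_i)=\cos^2\alpha\,V^*(s_i)A(s_i)V(s_i)+\sin^2\alpha\,W^*(s_i)A(s_i)W(s_i)
\]
with the cross terms vanishing by orthogonality of $\operatorname{Im}W(s_i)$ to $A(s_i)\operatorname{Im}V(s_i)$ and $A^*(s_i)\operatorname{Im}V(s_i)$. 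A short computation shows the right side equals $D(s_i)$ plus an error of order $\cos^2\alpha\cdot\e+\sin^2\alpha\cdot\e$ which is still $\le\e$ — so at this stage I have not yet gained; the gain comes from \emph{iterating} this correction step finitely many times, each time halving (or better, arbitrarily shrinking) the residual error while the cumulative displacement remains geometrically bounded by $\sum_m\sqrt{2\e_m/\theta}\le\sqrt{2\e/\theta}$ if the $\e_m$ decay fast enough; alternatively, and more cleanly, one arranges in a single step that $W^*(s_i)A(s_i)W(s_i)$ carries \emph{almost all} of the deficit $D(s_i)-V^*(s_i)A(s_i)V(s_i)$ rescaled by $\cos^{-2}\alpha$, which is legitimate precisely because its norm is $\le\e/\cos^2\alpha<\theta\le r-\|D(s_i)\|$, leaving residual error $\le\e'$ at the mesh points.

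Finally I would interpolate between consecutive corrected mesh values by the same cosine–sine rotation device as in Lemma~\ref{L2}: for $s\in[s_{i-1},s_i]$, after first noting that $\widetilde V(s_{i-1})$ and $\widetilde V(s_i)$ need not have orthogonal ranges, I would instead interpolate each transition through an auxiliary fresh isometry with range orthogonal to everything relevant (built again by Lemma~\ref{L1} on a finite-codimension subspace of $L$), so that all rotations are between orthogonal pairs and the resulting $\widetilde V$ is isometry-valued, has $Z(\widetilde V)\subset L$ finite-dimensional, and satisfies $\Lip(\widetilde V)\le\tilde l$ with $\tilde l\sim N\cdot(\text{const})$ depending only on the stated data. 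The mesh being fine, $\|A(s)-A(s_i)\|$ and $\|D(s)-D(s_i)\|$ contribute at most $O(\e')$ to $\|\widetilde V^*(s)A(s)\widetilde V(s)-D(s)\|$ on each subinterval, yielding \eqref{ct}, and the displacement bound \eqref{cont} is inherited from the mesh-point estimate $\sqrt{2\e/\theta}$ since interpolation only moves $\widetilde V(s)$ within the convex-type combination of nearby corrected values and $V(s)$ within the analogous combination of nearby $V(s_i)$'s, which are within $O(\e')$ of each other. The main obstacle is the bookkeeping needed to keep the interpolation rotations between genuinely orthogonal ranges while simultaneously preserving the smallness $\sqrt{2\e/\theta}$ of the total displacement and not letting the Lipschitz constant depend on $\e$; this is handled by doing all the "orthogonalization" at the finitely many mesh points via Lemma~\ref{L1} (finite codimension is preserved throughout) and only then interpolating.
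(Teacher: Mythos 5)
Your central algebraic idea is the right one and matches the paper's: the deficit $D-V^*AV$ must be loaded onto the new component with the large rescaling $a^{-1}=\theta/\e$ (note: this is $\sin^{-2}\alpha$, not $\cos^{-2}\alpha$ as you wrote), i.e.\ one compresses $A$ to the target $\widetilde D=a^{-1}\bigl(D+(a-1)V^*AV\bigr)$, which satisfies $\|\widetilde D(t)\|<\|D(t)\|+\theta$ and hence is reachable by Lemma~\ref{L1}/\ref{L2}; and the cross terms are killed by choosing the new ranges orthogonal to $Z(V)$ and (approximately, via a finite net) to the compact sets $A(t)V(t)B_X$, $A^*(t)V(t)B_X$.

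The genuine gap is in your interpolation step. You correct $V$ only at the mesh points $s_i$ and then propose to join $\widetilde V(s_{i-1})$ to $\widetilde V(s_i)$ by routing each transition ``through an auxiliary fresh isometry with range orthogonal to everything relevant.'' Any such detour passes through states of the form $\cos\beta\,\widetilde V(s_{i-1})+\sin\beta\,W_i$ with $\operatorname{Im}W_i$ essentially orthogonal to $\operatorname{Im}V(s)$; for a unit vector $x$ this gives $\|\widetilde V(s)x-V(s)x\|^2\approx(1-\cos\beta)^2+\sin^2\beta=2(1-\cos\beta)$, which is of order $1$ at $\beta=\pi/2$, destroying the bound \eqref{cont} $\|\widetilde V(t)-V(t)\|\le\sqrt{2\e/\theta}$ (and, along the way, the error estimate \eqref{ct} at intermediate $\beta$). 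Your claim that interpolation keeps $\widetilde V(s)$ in a ``convex-type combination of nearby corrected values'' is exactly what fails once an orthogonal auxiliary range is inserted. The repair — and the paper's actual route — is to interpolate the \emph{correction}, not the corrected isometry: one first produces a single continuous isometry-valued $R$ on all of $[0,1]$ with $R^*(t)A(t)R(t)\approx\widetilde D(t)$ and $Z(R)$ contained in a finite-codimension subspace orthogonal to $Z(V)$ and to the finite $\e'$-net (this is precisely Lemma~\ref{L2} applied to the target $\widetilde D(\cdot)$, whose modulus of continuity is controlled by $l$, $\delta_A$, $\delta_D$), and only then superposes with \emph{fixed} weights, $\widetilde V(t)=\sqrt{1-a}\,V(t)+\sqrt{a}\,R(t)$. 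Since $Z(R)\perp Z(V)$ globally, $\widetilde V(t)$ is an isometry for every $t$, the displacement is $\sqrt{2-2\sqrt{1-a}}\le\sqrt{2a}$ uniformly in $t$, and $\Lip(\widetilde V)\le\Lip(V)+\Lip(R)$; all the mesh/orthogonal-rotation bookkeeping is thereby confined to Lemma~\ref{L2}, where it acts on the small component $R$ only and cannot spoil \eqref{cont}.
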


\begin{proof}
Denote by $B_X$ the closed unit ball in $X$.
Clearly the sets
\[\left \{A(t)V(t)B_X:0\le t\le 1\right \} \qquad \text{and} \qquad
\left\{A^*(t)V(t)B_X: 0\le t\le 1 \right\}\]
 are compact.
So there exist $m\in\mathbb N$ and vectors $x_1,\dots x_m\in H$ such that
$$
\min\{\|A(t)V(t)x-x_j\|:1\le j\le m\}\le \e'/2
$$
and
$$
\min\{\|A^*(t)V(t)x-x_j\|:1\le j\le m\}\le \e'/2
$$
for all $t\in [0,1]$ and $x\in B_X$. Let $a={\e}{\theta}^{-1},$
and
$$
\widetilde D(t)=\frac{D(t)+(a-1)V^*(t)A(t)V(t)}{a}, \qquad t \in [0,1].
$$
Then for every $t \in [0,1],$
\begin{align*}
\|\widetilde D(t)\|\le&\|D(t)\|+a^{-1}(1-a)\bigl\|D(t)-V^*(t)A(t)V(t)\bigr\|\\
\le& \|D(t)\|+\frac{(1-a)\e}{a}< \|D(t)\|+\frac{\e}{a}\\
=&\|D(t)\|+\theta.
\end{align*}
Let
\[
M=Z(V)^\perp\cap\{x_1,\dots,x_m\}^\perp.
\]
By Lemma \ref{L2}, there exists a norm-continuous isometry-valued function $R:[0,1]\to L(X,H)$ such that
$Z(R)\subset M \cap L$, $\dim Z(R)<\infty$,
and
$$
\left \|R^*(t)A(t)R(t)-\widetilde D(t)\right\|\le\frac{\e'}{2}
$$
for all $t\in [0,1]$. In particular,
$$
Z(R)\perp Z(V),\qquad Z(R)\perp\{x_1,\dots,x_m\},
$$
Moreover, $\Lip(R)<\infty,$ and $\Lip(R)$ depends only on $\e'$ and the moduli of continuity $\delta_A$ and $\delta_{\widetilde D}$, which in turn depends only on $\e'$, $l$, $\delta_A$ and $\delta_D$.

Define
\begin{equation}\label{defv}
\widetilde V(t)=\sqrt{1-a}\,V(t)+\sqrt{a}\,R(t), \qquad t \in [0,1].
\end{equation}
By construction, $Z(\widetilde V) \subset L.$ Moreover, for every $t \in [0,1],$ taking into account that $R(t)x\perp V(t)x, x\in X,$ we have
\[
\|\widetilde V(t)x\|^2=(1-a)\|V(t)x\|^2+a\|R(t)x\|^2=\|x\|^2
\]
 for all  $x\in X$. So $\widetilde V(t)$ is an isometry on $X$.
In addition, $Z(\widetilde V)\subset Z(V)+Z(R),$ so $\dim Z(\widetilde V)<\infty$.

To prove \eqref{ct}, observe that for all $x,x'\in X$ such that $\|x\|=\|x'\|=1,$
\begin{align*}
&\Bigl|\left \langle \bigl(\widetilde V^*(t)A(t)\widetilde V(t)-D(t)\bigr)x,x' \right \rangle\Bigr| \\
=&
\Bigl|\Bigl\langle \sqrt{1-a}A(t)V(t)x+\sqrt{a}A(t)R(t)x,\sqrt{1-a}V(t)x'+\sqrt{a}R(t)x'\Bigr\rangle-\langle D(t)x,x'\rangle\Bigr|\\
\le&
\Bigl|(1-a)\bigl\langle A(t)V(t)x,V(t)x'\bigr\rangle+a\bigl\langle A(t)R(t)x,R(t)x'\bigr\rangle-\langle D(t)x,x'\rangle\Bigr|\\
+&\sqrt{a(1-a)}\left|\langle A(t)V(t)x,R(t)x'\rangle\right|+\sqrt{a(1-a)}\left|\langle A(t)R(t)x,V(t)x'\rangle\right|,
\end{align*}
where
$$
\left|\langle A(t)V(t)x,R(t)x'\rangle\right|\le
\min\left\{\left\|A(t)V(t)x-x_j\right\|:1\le j\le m\right\}\cdot\|R(t)x'\|\le\e'/2,
$$
and, similarly,
$$
\bigl|\langle A(t)R(t)x,V(t)x'\rangle\bigr|\le\e'/2.
$$
Furthermore,
\begin{align*}
&\bigl|(1-a)\langle A(t)V(t)x,V(t)x'\rangle+a\langle A(t)R(t)x,R(t)x'\rangle-\langle D(t)x,x'\rangle\bigr|\\
\le&
\bigl|(1-a)\langle A(t)V(t)x,V(t)x'\rangle +a\langle\widetilde D(t)x,x'\rangle-\langle D(t)x,x'\rangle\bigr|+{\e'/2}\\
=&\e'/2.
\end{align*}
Hence, since $\sqrt{a(1-a)}\le 1/2$,
$$
\|\widetilde V^*(t)A(t)\widetilde V(t)-D(t)\|\le\e', \qquad t \in [0,1],
$$
i.e., \eqref{ct} holds.

To show \eqref{cont}, we note that for all $t\in [0,1]$ and $x\in H$, $\|x\|=1,$
\begin{align*}
\|\widetilde V(t)x-V(t)x\|^2=&
(1-\sqrt{1-a})^2\|V(t)x\|^2+{a}\|R(t)x\|^2\\
=&
(1-\sqrt{1-a})^2+{a}= 2-2\sqrt{1-a}\le 2{a}.
\end{align*}
Therefore,
\[
\|\tilde V(t)-V(t)\|\le\sqrt{2a}=\sqrt{\frac{2\e}{\theta}}, \qquad t \in [0,1].
\]

Finally, if $t,t'\in[0,1],$ then
\begin{align*}
\|\widetilde V(t')-\widetilde V(t)\|\le&
\sqrt{1-a}\|V(t')-V(t)\|+\sqrt{a}\|R(t')-R(t)\|\\
\le& l|t'-t|+\sqrt{a} \cdot \Lip(R) |t'-t|\\
\le& \widetilde l|t'-t|
\end{align*}
for some constant $\widetilde l$ depending only on $l$, $\e'$ and the moduli of continuity $\delta_A$ and $\delta_D$.
\end{proof}

Before implementing the factorization argument in Proposition \ref{P6} below,
we note a related statement with $D$ acting on a finite-dimensional space.
In this case, we are able to provide better control
for regularity of $V,$ which will be crucial in
construction of diagonals for $A$ in the next section.

\begin{theorem}\label{T5}
Let
$X$ be a Hilbert space with $\dim X<\infty$,
$A \in C([0,1], L(H))$, $D \in C([0,1], L(X))$
and
$$
W_e(A(t))\supset \{z:|z|\le \|D(t)\|+\theta\}
$$
for all $t\in[0,1]$. Then for every subspace $L$ of $H$ with ${\rm codim}\, L<\infty,$ there exists
an isometry-valued function $V \in C([0,1], L(X,H)),$ $Z(V)\subset L,$ with modulus of continuity $\delta,$
depending only on $\theta$ and the moduli of continuity
$\delta_A$ and $\delta_D$,
such that
$$
V^*(t)A(t)V(t)=D(t), \qquad t \in [0,1].
$$
\end{theorem}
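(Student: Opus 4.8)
\section*{Proof proposal for Theorem~\ref{T5}}

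The plan is to obtain $V$ as the uniform limit of a sequence $(V_k)_{k\ge 0}$ of approximate solutions, produced by Lemma~\ref{L2} at the initial step and improved step by step by Proposition~\ref{P4}, with the approximation errors prescribed in advance to decay geometrically. Then $(V_k)$ is Cauchy in the uniform norm, its limit is an exact, isometry-valued solution of $V^*(t)A(t)V(t)=D(t)$, and its modulus of continuity can be assembled from the (fixed) Lipschitz bounds of the $V_k$.

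Fix a subspace $L\subset H$ with $\codim L<\infty$. Since $W_e(A(t))\supset\{z:|z|\le\|D(t)\|+\theta\}$ and $W_e(A(t))$ is convex and compact, the open disc $\mathbb D_{\|D(t)\|+\theta}$ lies in $\Int W_e(A(t))$, so $\dist(0,\partial W_e(A(t)))\ge\|D(t)\|+\theta>\|D(t)\|$ and in particular $0\in\Int W_e(A(t))$ for every $t$. Set $\e_k:=\theta\,2^{-2k-1}$ for $k\ge 0$, so $\e_0=\theta/2\in(0,\theta)$ and $\sqrt{2\e_k/\theta}=2^{-k}$. By Lemma~\ref{L2} (applicable since $X$ is finite-dimensional and $\|D(t)\|<\dist(0,\partial W_e(A(t)))$) there is an isometry-valued $V_0\in C([0,1],L(X,H))$ with $Z(V_0)\subset L$, $\dim Z(V_0)<\infty$, $\Lip(V_0)\le l_0$ and $\|V_0^*(t)A(t)V_0(t)-D(t)\|\le\e_0$ for all $t$, where $l_0$ depends only on $\e_0$ (hence on $\theta$) and on $\delta_A,\delta_D$. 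Proceeding inductively, given an isometry-valued $V_k\in C([0,1],L(X,H))$ with $Z(V_k)\subset L$, $\dim Z(V_k)<\infty$, $\Lip(V_k)\le l_k$ and $\|V_k^*(t)A(t)V_k(t)-D(t)\|\le\e_k<\theta$, apply Proposition~\ref{P4} with $\e=\e_k$, $\e'=\e_{k+1}$ to get an isometry-valued $V_{k+1}\in C([0,1],L(X,H))$ with $Z(V_{k+1})\subset L$, $\dim Z(V_{k+1})<\infty$, $\Lip(V_{k+1})\le l_{k+1}$, $\|V_{k+1}^*(t)A(t)V_{k+1}(t)-D(t)\|\le\e_{k+1}$ and $\|V_{k+1}(t)-V_k(t)\|\le\sqrt{2\e_k/\theta}=2^{-k}$ for all $t$, where $l_{k+1}$ depends only on $l_k,\e_{k+1},\delta_A,\delta_D$. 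Since the sequence $(\e_k)$ is determined by $\theta$ alone, an obvious induction shows that each $l_k$ depends only on $\theta,\delta_A,\delta_D$.

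Because $\sum_{k\ge 0}2^{-k}<\infty$, the sequence $(V_k)$ converges uniformly on $[0,1]$ to some $V\in C([0,1],L(X,H))$. Each $V(t)$ is a norm-limit of isometries, hence an isometry; $V(t)\in L$ since $L$ is closed, so $Z(V)\subset L$; and since $\|V_k(t)\|=1$ for all $k$ one has $V_k^*(t)A(t)V_k(t)\to V^*(t)A(t)V(t)$, so passing to the limit in $\|V_k^*(t)A(t)V_k(t)-D(t)\|\le\e_k$ gives $V^*(t)A(t)V(t)=D(t)$ for all $t$. For the modulus of continuity, put $W_k:=V_{k+1}-V_k$, so $\|W_k(t)\|\le 2^{-k}$ and $\Lip(W_k)\le l_{k+1}+l_k=:\mu_k$, whence $\|W_k(t')-W_k(t)\|\le\min\{2^{1-k},\mu_k|t'-t|\}$. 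Using $V=V_0+\sum_{k\ge 0}W_k$ we obtain
\[
\|V(t')-V(t)\|\le l_0|t'-t|+\sum_{k\ge 0}\min\{2^{1-k},\mu_k|t'-t|\}=:\delta(|t'-t|).
\]
The function $\delta$ is finite, non-decreasing, and $\delta(0)=0$; each summand tends to $0$ as $s\to 0_+$ and is dominated by the summable sequence $2^{1-k}$, so $\delta(s)\to 0$ as $s\to 0_+$ and $\delta$ is a modulus of continuity for $V$ built only from $l_0$ and the numbers $\mu_k$, hence depending only on $\theta,\delta_A,\delta_D$. This completes the proof.

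The only delicate point is the last one: Proposition~\ref{P4} does not prevent the Lipschitz constants $l_k$ from growing along the iteration, so the limit $V$ need not be Lipschitz, and one must interpolate between the uniform amplitude bound $2^{-k}$ on the increment $W_k$ and its (possibly large) Lipschitz bound $\mu_k$ to extract a genuine modulus of continuity. The fact that the target errors $\e_k$ are fixed in advance as functions of $\theta$ alone is exactly what keeps every $l_k$, and hence $\delta$, dependent only on $\theta$, $\delta_A$ and $\delta_D$.
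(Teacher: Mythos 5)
Your proposal is correct and follows essentially the same route as the paper's proof: an initial approximation from Lemma~\ref{L2}, iterated improvement via Proposition~\ref{P4} with geometrically decaying prescribed errors, uniform convergence of the resulting isometries, and a modulus of continuity assembled from the per-step Lipschitz constants together with the geometric amplitude bounds on the increments (the paper packages this last step as $\delta(\e)=\inf_{n_0}\bigl(\mathrm{tail}(n_0)+l_{n_0}\e\bigr)$ rather than your termwise minimum, but the interpolation idea is identical).
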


\begin{proof}
By Lemma \ref{L2}, there exists $l_1>0$ (depending only on $\theta$ and the moduli of continuity $\delta_A$ and $\delta_D$) and an isometry-valued function  $V_1 \in C([0,1], L(X,H))$ such that $Z(V_1)\subset L$, $\dim Z(V_1)
<\infty $, $\Lip(V_1)\le l_1$ and
\[
\bigl\|V^*_1(t)A(t)V_1(t)-D(t)\bigr\|\le \theta/2
\]
for all $t\in[0,1]$.
By
an inductive argument, choosing $\varepsilon=2^{-n} \theta$
and $\varepsilon'=2^{-n-1}\theta$ in Proposition \ref{P4} for every $n \in \mathbb N,$ we
find $l_{n}>0$ (depending only on $\theta$, $\delta_A$ and $\delta_D$) and an  isometry-valued function $V_n \in C([0,1], L(X,H)), n \ge 2,$ such that
\begin{equation}\label{z}
Z(V_n)\subset L, \quad \dim Z(V_n)<\infty, \quad \Lip(V_n)\le l_n,
\end{equation}
\begin{equation}\label{d}
\bigl\|V_n^*(t)A(t)V_n(t)-D(t)\bigr\|\le\theta/2^n,
\end{equation}
and
\begin{equation}\label{v}
\|V_{n+1}(t)-V_n(t)\|\le 2^{\frac{1-n}{2}}
\end{equation}
for all $n \ge 2$ and $t\in[0,1]$.

From \eqref{v} it follows that the series $\sum_{n=1}^\infty \left \| V_{n+1}(t)-V_n(t) \right \|$
converges uniformly on $[0,1].$ Hence setting 
\[
V(t)=\lim_{n\to\infty} V_n(t), \qquad t \in [0,1],
\]
we infer that $V \in C([0,1], L(X, H)).$ Since $V_n$ is isometry-valued and $Z(V_n)\subset L$ for every $n \in  \mathbb N,$  the same holds 
for $V.$ Moreover, in view of \eqref{d}, we have
$$
V^*(t)A(t)V(t)
=\lim_{n\to\infty} V_n^*(t) A(t)V_n(t)=D(t)
$$
for all $t\in [0,1]$.

Let $\e>0$ and $n_0\in\mathbb N$. If $t,t'\in[0,1]$, $|t-t'|\le \e,$ then
\begin{align*}
\|V(t)-V(t')\|\le&
\|V(t)-V_{n_0}(t)\|+\|V_{n_0}(t)-V_{n_0}(t')\|\\
+&\|V_{n_0}(t')-V(t')\|
\le
2\cdot\sum_{n=n_0}^\infty 2^{\frac{1-n}{2}}+l_{n_0}\e\\
\le& \frac{2^{(5-n_0)/2}}{2-\sqrt 2}+l_{n_0}\e
\end{align*}
Set
\[
\delta(\e)=\inf_{n_0\in\mathbb N}\left(\frac{2^{(5-n_0)/2}}{2-\sqrt 2}+l_{n_0}\e \right).
\]
Then $V$ admits $\delta$ as a modulus of continuity.

\end{proof}

Now we have all the required tools to realize the first step of
our factorization by finding a diagonal operator among
continuous compressions of $A.$

\begin{proposition}\label{P6}
Let $A \in C([0,1], L(H))$ and assume that $0\in{\rm Int}\, W_e(A(t))$ for all $t\in[0,1]$.
Let $X$ be an infinite-dimensional separable Hilbert space and let $D\in L(X)$ be a diagonal operator satisfying
\[
\|D\|<\min_{t\in[0,1]} \dist(0,\partial W_e(A(t))).
\]
Then there exists an isometry-valued mapping $V \in C([0,1], L(X,H))$ such that
\[
V^*(t)A(t)V(t)=D, \qquad  t\in [0,1].
\]
\end{proposition}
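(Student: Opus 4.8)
The plan is to build the isometry $V$ from an orthonormal system of continuous $H$-valued functions realizing the diagonal entries of $D$, controlling the continuity of $V$ through the orthogonality of the corresponding spans.

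Fix an orthonormal basis $(f_n)_{n\ge1}$ of $X$ with $Df_n=d_nf_n$, and put $\theta:=\min_{t\in[0,1]}\dist(0,\partial W_e(A(t)))$ and $\theta_0:=\theta-\|D\|>0$. Then $|d_n|\le\|D\|$ for all $n$, and, since $0\in\Int W_e(A(t))$ and $W_e(A(t))$ is compact, $\{z:|z|\le|d_n|+\theta_0\}\subset\{z:|z|\le\theta\}\subset W_e(A(t))$ for every $n$ and every $t$. I would first reduce the proposition to the construction of functions $v_n\in C([0,1],H)$, $n\ge1$, such that: \textup{(i)} $\|v_n(t)\|=1$ for all $t$; \textup{(ii)} $Z(v_n)\perp Z(v_m)$ whenever $n\ne m$; \textup{(iii)} $\langle A(t)v_n(t),v_m(t)\rangle=d_n\delta_{nm}$ for all $n,m$ and all $t$; \textup{(iv)} the family $(v_n)_{n\ge1}$ is equicontinuous. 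Indeed, (i) and (ii) make $(v_n(t))_{n\ge1}$ an orthonormal system for each $t$, so $V(t)f_n:=v_n(t)$ extends to an isometry $V(t)\colon X\to H$; by (ii) the vectors $v_n(t)-v_n(t')$ ($n\ge1$) are pairwise orthogonal, so $\|V(t)-V(t')\|\le\sup_n\|v_n(t)-v_n(t')\|$ and (iv) yields $V\in C([0,1],L(X,H))$; and (iii) says exactly that $V^*(t)A(t)V(t)=D$.

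For a fixed index $n$, properties (i), the diagonal part of (iii), and (iv) --- with $Z(v_n)$ moreover contained in any prescribed subspace of finite codimension --- are provided by Theorem \ref{T5} applied with $X=\mathbb C$ and the \emph{constant} target $D(\cdot)\equiv d_n$; because this target is constant its modulus of continuity vanishes, so the modulus furnished by Theorem \ref{T5} depends only on $\theta_0$ and $\delta_A$, hence is one and the same modulus $\delta$ for all $n$. \textbf{The step I expect to be the main obstacle is reconciling this with (ii) and the off-diagonal part of (iii).} One cannot force $Z(v_n)\perp Z(v_m)$ by restricting $v_n$ into $Z(v_m)^\perp$, because Theorem \ref{T5} does not yield $v_m$ with finite-dimensional span; and $\langle A(t)v_n(t),v_m(t)\rangle=0$ cannot be reduced to placing $Z(v_n)$ inside a subspace of finite codimension, since $\bigvee_t A^*(t)v_m(t)$ and $\bigvee_t A(t)v_m(t)$ are generically infinite-dimensional.

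To get around this I would build the $v_n$ not one at a time to completion, but through a single inductive scheme that interleaves, for all the $v_n$ at once, the iterations underlying Lemma \ref{L2}, Proposition \ref{P4} and Theorem \ref{T5}. At every stage one carries finitely many isometry-valued continuous functions, \emph{each with a finite-dimensional range-span}, pairwise orthogonal at all pairs of times, and satisfying the relations in (iii) up to a tolerance; a stage consists either of sharpening one of them by a Proposition \ref{P4}-type step or of introducing a new one via Lemma \ref{L2}, and in both cases the relevant vectors are built into a subspace of finite codimension orthogonal to everything produced so far --- which is legitimate precisely because everything produced so far spans a finite-dimensional space --- while at the finitely many grid points $t_j$ of the partition used at that stage they are also made orthogonal to the vectors $A(t_j)v_m(t_j)$ and $A^*(t_j)v_m(t_j)$ attached to the other current functions. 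As in the proofs of Lemma \ref{L2} and Proposition \ref{P4}, the errors caused by $\|A(s)-A(t_j)\|$ on the interiors of the grid intervals are absorbed into the tolerances, which are chosen summable so that the successive versions of each function converge uniformly; taking the grids nested with union dense in $[0,1]$ and passing to the uniform limits, one obtains functions $v_n\in C([0,1],H)$ that inherit the common modulus $\delta$, have pairwise-orthogonal spans (orthogonality being \emph{exact} at every stage, it survives the limit), satisfy $\|v_n(t)\|\equiv1$ and $\langle A(t)v_n(t),v_n(t)\rangle=d_n$ (each approximate equality having vanishing error), and satisfy $\langle A(t)v_n(t),v_m(t)\rangle=0$ for $n\ne m$ --- the last because the grid-point orthogonality is exact at each grid point in the limit and, by continuity of $A,v_n,v_m$ together with density of the grids, propagates to all $t\in[0,1]$. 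This establishes (i)--(iv) and hence the proposition.
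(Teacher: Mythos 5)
Your proposal is correct and follows essentially the same route as the paper's proof: an interleaved double induction in which finitely many finite-span, mutually orthogonal approximants are alternately extended (via Lemma \ref{L2}) and sharpened (via Proposition \ref{P4}-type corrections placed in finite-codimension subspaces nearly $A$-orthogonal to everything already built), with uniform Lipschitz bounds $l_n$ and summable tolerances securing convergence and a common modulus of continuity. The only difference is organizational: the paper groups the basis of $X$ into dyadic blocks $X_k$ and controls the error $\bigl\|V_{k,n}^*(t)A(t)V_{k,n}(t)-D_k\bigr\|$ in operator norm --- which handles your off-diagonal entries automatically, with no need for the exact-at-grid-points density argument --- whereas you proceed column by column with entry-wise control.
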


\begin{proof}
Choose $\theta>0$ such that
\[
W_e(A(t))\supset \{z\in\mathbb C:|z|\le \|D\|+\theta\}, \qquad t\in[0,1].
\]

Let $(x_j)_{j=1}^\infty$ be an orthonormal basis in $X$ and let $(d_j)_{j=1}^\infty\subset\mathbb C$ be such that
$Dx_j=d_jx_j$ for all $j\ge 1$.
Set
\[
X_k=\bigvee\{x_j:j\le 2^k\}
\qquad\text{and}\qquad
D_k=P_{X_k}D{\upharpoonright}_{X_k}, \qquad k \in \mathbb N.
\]

As in the proof of Theorem~\ref{T5}, we first use Lemma~\ref{L2} and then apply Proposition~\ref{P4} successively with
$\e=2^{-(n-1)}\theta$ and $\e'=2^{-n}\theta$, $n \ge 2$, in order to obtain a sequence $(l_n)_{n=1}^\infty\subset(0,\infty)$
depending only on $\theta$ and $\delta_A$.

We now construct inductively isometry-valued continuous mappings
\[
V_{k,n}:[0,1]\to L(X_k,H), \qquad n\le k,
\]
such that
\begin{itemize}
\item[(i)] $\dim Z(V_{k,n})<\infty$;
\item[(ii)] $\bigl\|V_{k,n}^*(t)A(t)V_{k,n}(t)-D_k\bigr\|\le 2^{-n} \theta (2-2^{-k})$;
\item[(iii)] $\bigl\|V_{k,n}(t)-V_{k,n-1}(t)\bigr\|\le 2^{\frac{2-n}{2}}$;
\item[(iv)] $\Lip(V_{k,n})\le l_n$;
\item[(v)] $V_{k+1,n}$ is an extension of $V_{k,n}$.
\end{itemize}

First, Lemma~\ref{L2} yields $V_{1,1}$ with properties \textnormal{(i)}–\textnormal{(v)}.

Assume now that for some $k\in\mathbb N$ we have constructed $V_{k',n}$ satisfying \textnormal{(i)}–\textnormal{(v)}
for all $n$ and $k'$ with $n\le k'\le k$.
Set
\[
Y_k=X_{k+1}\ominus X_k
\qquad\text{and}\qquad
D'_k= P_{Y_k}D{\upharpoonright}_{Y_k}.
\]

Define
\[
L_k= \bigcap_{n=1}^k Z(V_{k,n})^\perp,
\]
so that ${\rm codim}\, L_k <\infty.$
Using the compactness of the set
$$
\left\{A(t)V_{k,n}x, A^*(t)V_{k,n}(t)x: t\in[0,1], 1\le n\le k, x\in X_k, \|x\|=1\right\}
$$
and Lemma \ref{comp}, we find a subspace  $M_k\subset L_k$ with $\codim M_k<\infty$ such that
\[
\left|\langle A(t)V_{k,n}x,m\rangle\right|<\theta\cdot 2^{-2k-2}
\qquad
\text{and} \qquad
\left|\langle A^*(t)V_{k,n}x,m\rangle\right|<\theta\cdot 2^{-2k-2}
\]
for all $t\in[0,1], 1\le n\le k,$ and $x\in X_k, m\in M_k,$ with $\|x\|=\|m\|=1$.

\smallskip

\emph{Step 1: construction of $V_{k+1,1}$.}
By Lemma~\ref{L2} there exists an isometry-valued mapping $R_{k,1}:[0,1]\to L(Y_k, H)$ such that
$Z(R_{k,1})\subset M_k$,
\[
\dim Z(R_{k,1})<\infty, \qquad  \Lip(R_{k,1})\le l_1,
\]
and
\[
\|R^*_{k,1}(t)A(t)R_{k,1}(t)-D'_k\|\le \theta/2, \qquad t \in [0,1].
\]
For $t \in [0,1]$ set
\[
V_{k+1,1}(t)=V_{k,1}(t)\oplus R_{k,1}(t).
\]
Arguing as in Proposition~\ref{P4}, one checks that $V_{k+1,1}$ satisfies \textnormal{(i)}–\textnormal{(v)} (for $n=1$).

\smallskip

\emph{Step 2: construction of $V_{k+1,j}$ for $2\le j\le k$.}
Assume that $V_{k+1,1},\dots,V_{k+1,j-1}$ have already been defined for some $2\le j\le k$.
Using Proposition~\ref{P4}, we obtain a continuous isometry-valued mapping
$R_{k,j}:[0,1]\to L(Y_k,H)$ such that
\[
Z(R_{k,j})\subset M_k, \qquad \dim Z(R_{k,j})<\infty, \qquad  \Lip (R_{k,j})\le l_j,
\]
\[
\|R^*_{k,j}(t)A(t)R_{k,j}(t)-D'_k\|\le \theta/2^j
\quad\text{and}\quad
\|R_{k,j}(t)-R_{k,j-1}(t)\|\le 2^{\frac{2-j}{2}}
\]
for all $t \in [0,1]$.
We then set
\[
V_{k+1,j}(t)= V_{k,j}(t)\oplus R_{k,j}(t), \qquad t \in [0,1].
\]
Again $V_{k+1,j}$ satisfies \textnormal{(i)}–\textnormal{(v)}.

\smallskip

\emph{Step 3: construction of $V_{k+1,k+1}$.}
Once $V_{k+1,1},\dots,V_{k+1,k}$ are available, Lemma~\ref{L2} provides $V_{k+1,k+1}$ satisfying
\textnormal{(i)}–\textnormal{(v)}.
This completes the inductive step in $k$.

\smallskip

We have thus constructed continuous functions $V_{k,n}:[0,1]\to L(X_k,H)$ satisfying \textnormal{(i)}–\textnormal{(v)}
for all $k,n\in\mathbb N$ with $n\le k$.

Fix $n\in\mathbb N$ and $t\in[0,1]$. For $k\ge n$ the operator $V_{k+1,n}(t)$ extends $V_{k,n}(t)$.
Define an isometry
\[
V_n(t): \bigcup_{k=1}^\infty X_k \to H
\]
by
\[
V_n(t)x =V_{k,n}(t)x \quad \text{if } x \in X_k,
\]
and extend $V_n(t)$ by continuity to an isometry $V_n(t):X\to H$, keeping the same notation.
Moreover, for $t,t'\in[0,1]$ and $x\in\bigcup_{k=1}^\infty X_k$ with $\|x\|=1$ we have
\[
\|(V_n(t)-V_n(t'))x\|\le l_n|t-t'|,
\]
and hence the same inequality holds for all $x\in X$ with $\|x\|=1$.
Thus $V_n$ is continuous on $[0,1]$ and $\Lip(V_n)\le l_n$.

By construction we also have, for all $t\in[0,1]$,
\begin{equation}\label{lim}
\|V^*_n(t)A(t)V_{n}(t)-D\|\le 2 \cdot \theta/2^n
\qquad \text{and} \qquad
\|V_{n+1}(t)-V_n(t)\|\le 2^{\frac{1-n}{2}}.
\end{equation}

Arguing as in the proof of Theorem~\ref{T5} and using the second estimate in
\eqref{lim}, we define
\[
V(t)=\lim_{n\to\infty} V_n(t), \qquad t\in[0,1],
\]
and infer that $V\in C([0,1],L(X,H))$ and that $V$ is isometry-valued.
Finally, the first estimate in \eqref{lim} implies that
$V^*(t)A(t)V(t)=D$ for all $t\in[0,1]$.
\end{proof}

We proceed with the second step of our factorization and construct $D$
as a continuous compression of a fixed unitary operator $U,$
being a bilateral shift of infinite multiplicity.
In other words, $U$ will be a dilation of  $D(t)$ for every $t \in [0,1],$
 on a space depending continuously on $t.$
This will require to impose additional restriction
on the size $D,$ which is only of a technical character.
For the arguments employed in the next proposition,
recall that a unitary operator $U$ on a Hilbert space $Y$ is said to be a bilateral shift of multiplicity $\mu$ if there exists a subspace $Z$ of $Y$ with $\dim Z=\mu,$  a wandering subspace
of $U,$ such that
$Y=\bigoplus_{j\in\mathbb Z}U^jZ,$ where $\bigoplus$ denotes the orthogonal sum.

\begin{proposition}\label{P8}
Let $X$ and $Y$ be separable Hilbert spaces such that $\dim X=\dim Y=\infty.$
Suppose that $U\in L(Y)$ is the bilateral shift of infinite multiplicity
and $D\in C([0,1], L(X))$ satisfies $\max\{\|D(t)\|:t\in[0,1]\}<1$. Then there exists  $W \in C([0,1], L(X, Y))$ such that
$W$ is isometry-valued and
\[
W^*(t)UW(t)=D(t), \qquad t\in[0,1].
\]
\end{proposition}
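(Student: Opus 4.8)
The plan is to realize $D(t)$ as a compression of the bilateral shift $U$ of infinite multiplicity by building an isometry-valued dilation directly from the classical Sz.-Nagy dilation construction, but performed continuously in $t$. Write $Y=\bigoplus_{j\in\mathbb Z}U^jZ$ for a wandering subspace $Z$ with $\dim Z=\infty$. For a fixed strict contraction $D_0\in L(X)$ with $\|D_0\|<1$, the minimal unitary dilation of $D_0$ acts on $X_-\oplus X\oplus X_+$ (with $X_\pm=\bigoplus_{j\ge1}X$) and can be made, after absorbing the defect spaces into copies of $X$, a restriction of a bilateral shift of multiplicity $\dim X=\infty$; the point is that $U$, being a bilateral shift of \emph{infinite} multiplicity, contains such a dilation space isometrically. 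So the real content is to choose the embedding isometry $W(t)\colon X\to Y$ to depend continuously on $t$.

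First I would fix, once and for all, a decomposition $Z=\bigoplus_{i\in\mathbb Z} Z_i$ with each $\dim Z_i=\dim X=\infty$ and fixed unitaries $Z_i\cong X$, so that $Y\cong \bigoplus_{(i,j)\in\mathbb Z^2}X$ and $U$ acts as the shift in the $j$-index. Using the explicit Sz.-Nagy formula, the dilation isometry for a strict contraction $D_0$ sends $x\in X$ to a vector whose components involve $D_0^k x$ and $(I-D_0^*D_0)^{1/2}D_0^k x$, $k\ge0$, distributed over finitely-shifted copies of $X$; crucially, this vector depends \emph{norm-continuously} (indeed real-analytically) on $D_0$ as long as $\|D_0\|$ stays bounded away from $1$, because $(I-D_0^*D_0)^{1/2}$ is a continuous function of $D_0$ on $\{\|D_0\|\le c<1\}$ and the geometric-type series $\sum_k D_0^k$-expressions converge uniformly there. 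Setting $W(t)$ to be this dilation isometry for $D_0=D(t)$, composed with the fixed identification of the dilation space as a reducing-free piece sitting inside $Y$, gives $W\in C([0,1],L(X,Y))$, isometry-valued, with $P_{W(t)X}U\!\upharpoonright_{W(t)X}$ unitarily equivalent to $D(t)$; equivalently $W^*(t)UW(t)=D(t)$ for all $t$. The hypothesis $\max_t\|D(t)\|<1$ is exactly what keeps the defect operators and the series uniformly continuous in $t$.

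An alternative, and perhaps cleaner, route avoids writing out the Sz.-Nagy formula: invoke Theorem~\ref{bourin} (or Lemma~\ref{L1}) together with the fact that $W_e(U)$ for a bilateral shift of infinite multiplicity is the closed unit disc, to get \emph{existence} of a compression for each $t$, and then run the same factorization-and-induction machinery already developed in this section — Lemma~\ref{L2} and Proposition~\ref{P4} with $A\equiv U$ constant — to upgrade pointwise-in-$t$ compressions to a continuous family. Concretely one first handles finite-dimensional truncations $D_k(t)=P_{X_k}D(t)\!\upharpoonright_{X_k}$ via Theorem~\ref{T5} applied with $A(t)\equiv U$ (noting $W_e(U)=\overline{\mathbb D}\supset\{|z|\le\|D(t)\|+\theta\}$ for a suitable uniform $\theta>0$, which exists precisely because $\sup_t\|D(t)\|<1$), obtaining isometry-valued $W_k\in C([0,1],L(X_k,Y))$ with $W_k^*(t)UW_k(t)=D_k(t)$; then, exactly as in Proposition~\ref{P6}, one arranges the $W_k$ to be successive extensions by choosing the new pieces $R_k$ with ranges in ever-thinner finite-codimension subspaces of $Y$, and passes to the limit $W(t)=\lim_k W_k(t)$ on $\bigcup_k X_k$, extending by continuity to $X$. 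Since this is verbatim the scheme of Proposition~\ref{P6} with the constant function $U$ in place of $A(t)$, I would simply say ``repeat the proof of Proposition~\ref{P6} with $A(t)\equiv U$'' and only note the trivial simplifications (no $t$-dependence in $W_e$, so no need to track $\delta_A$).

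\textbf{Main obstacle.} The genuinely delicate point in either route is ensuring that the various finite-dimensional pieces constructed at stage $k$ can be chosen with ranges orthogonal to everything built before \emph{and} nearly orthogonal to the images under $U$ and $U^*$ of everything built before, so that the block operators $V_{k,n}(t)\oplus R_{k,n}(t)$ genuinely compress $U$ to the block-diagonal $D_k(t)\oplus D'_k(t)$ up to small error uniformly in $t$; this is handled by Lemma~\ref{comp} applied to the compact sets $\{U^{\pm1}W_{k,n}(t)x:\ t\in[0,1],\ \|x\|=1\}$, just as in Proposition~\ref{P6}, but one must check the error bookkeeping closes — which it does, with the same geometric choices of tolerances $\varepsilon=2^{-n}\theta$, $\varepsilon'=2^{-n-1}\theta$. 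Everything else is routine.
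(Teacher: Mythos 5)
Your first route is essentially the paper's own proof. The paper works with a wandering subspace $Z$ of $U$, fixes a surjective isometry $W_0:X\to Z$ (possible since $U$ has infinite multiplicity), and defines explicitly
$W(t)x=\sum_{j\ge 0}U^{-j}W_0\,\Delta_{D(t)}D^j(t)x$ with $\Delta_{D(t)}=(I-D^*(t)D(t))^{1/2}$; isometry and the identity $W^*(t)UW(t)=D(t)$ follow from telescoping sums (using $\|D^j(t)x\|\to 0$), and continuity in $t$ from exactly the two facts you cite: $\Delta_{D(t)}$ depends continuously on $D(t)$ on $\{\|D_0\|\le c<1\}$ (binomial series), and the geometric series estimates are uniform because $c=\max_t\|D(t)\|<1$. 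One caveat on your phrasing: routing the argument through the minimal unitary dilation of $D(t)$ and ``a fixed identification of the dilation space sitting inside $Y$'' hides the real issue, since the unitary identifying the Sch\"affer dilation space with a shift-invariant piece of $Y$ itself depends on $t$; the clean way out is to skip the unitary dilation altogether and write the one-sided embedding above, whose components are only the vectors $\Delta_{D(t)}D^j(t)x$ (components of the form $D^j(t)x$ without the defect factor would destroy the isometry).

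Your ``cleaner'' second route, however, has a genuine gap and should be dropped. The inductive scheme of Proposition~\ref{P6} builds $V_{k+1,n}=V_{k,n}\oplus R_{k,n}$ with the range of $R_{k,n}$ nearly orthogonal to $U^{\pm1}$ applied to the range of $V_{k,n}$; consequently the limit compresses $U$ (approximately, then exactly) to the \emph{block-diagonal} operator $\bigoplus_k P_{Y_k}D(t){\upharpoonright}_{Y_k}$, not to $D(t)$. This is harmless in Proposition~\ref{P6} precisely because the target there is a constant \emph{diagonal} operator, so the direct-sum decomposition is exact; for a general non-diagonal, $t$-dependent contraction $D(t)$ the off-diagonal blocks $P_{X_k}D(t){\upharpoonright}_{Y_k}$ are simply never produced, and no choice of tolerances fixes this. (Lemma~\ref{L2}, Proposition~\ref{P4} and Theorem~\ref{T5} also all require $\dim X<\infty$, so they cannot be applied to $D(t)$ directly.) This inability of the inductive machinery to reach general infinite-dimensional targets is exactly why the paper isolates Proposition~\ref{P8} as a separate step with the explicit dilation formula. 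So: keep route one, make the formula explicit, and delete route two.
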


\begin{proof}
Let $Z\subset Y$ be a wandering subspace for $U$, i.e., $Y=\bigoplus_{j\in\mathbb Z}U^jZ$.
Let $W_0:X\to Z$ be a surjective isometry. For every $t \in [0,1]$
consider the defect operator $\Delta_{D(t)}:=\left(I-D^*(t)D(t)\right)^{1/2},$
and define the operator $W(t): X \to Y$
by
$$
W(t)x:=\sum_{j=0}^\infty U^{-j}W_0 \Delta_{D(t)}D^j(t)x, \qquad x \in X.
$$

By the binomial series,
 $(1-z)^{1/2}=\sum_{n=0}^{\infty} \alpha_n z^n,$$ z \in \mathbb D,$
where
\[
 \alpha_n = \binom{1/2}{n} (-1)^{n}
= -\left|\frac{ (-1/2) (1-1/2) \dots (n-1-1/2)}{n!}\right|  \le 0, \qquad n \ge 1.
\]
 Thus we have $\sum_{n=0}^\infty |\alpha_n|<\infty$ and  $\Delta_{D(t)}$ can be defined by the Riesz-Dunford calculus for every $t \in [0,1].$
Note that $\Delta_{D(\cdot)} \in C([0,1], L(X))$ as the sum of the series converging uniformly on $[0,1],$ Moreover,
from the series expansion, $\Delta_{D(t)}$ is nonnegative
for every $t \in [0,1],$ since
 \[
\langle  \Delta_{D(t)}x , x \rangle \ge (1- \|D(t)\|^2)^{1/2} \ge 0,
\qquad x \in X, \|x\|=1.
 \]
In addition, estimating the series, we infer that  $\|\Delta_{D(t)}\|\le 1$.

We have
\begin{align*}
\|W(t)x\|^2=&
\sum_{j=0}^\infty\|U^{-j}W_0 \Delta_{D(t)}D^j(t)x\|^2
=\sum_{j=0}^\infty \|\Delta_{D(t)}D^j(t)x\|^2\\
=&
\sum_{j=0}^\infty (\|D^{j}(t)x\|^2-\|D^{j+1}(t)x\|^2)=
\|x\|^2-\lim_{j\to\infty}\|D^{j+1}(t)x\|^2=\|x\|^2.
\end{align*}
So $W(t)$ is an isometry for each $t\in [0,1]$.

Let $c=\max_{t\in [0,1]}\|D(t)\|,$  so that $c<1$ by assumption.
Given $\eta>0,$ there exists $\delta>0$ such that $\|D(t')-D(t)\|<\eta$ and $\|\Delta_{D(t')}- \Delta_{D(t)}\|<\eta$ for all $t,t'\in[0,1]$ with $|t'-t|<\delta$.
Then
\begin{align*}
&\|W(t')-W(t)\|\\
\le&
\sum_{j=0}^\infty \|\Delta_{D(t')}D^j(t')-\Delta_{D(t)}D^j(t)\|\\
\le&
\sum_{j=0}^\infty \|\Delta_{D(t')}-\Delta_{D(t)}\|\cdot\|D(t')\|^j+
\sum_{j=0}^\infty \|\Delta_{D(t)}\|\cdot \|D^j(t')-D^j(t)\|\\
\le&
\eta\sum_{j=0}^\infty c^j+
\sum_{j=0}^\infty
\Bigl\|\sum_{r=0}^{j-1} D^{j-1-r}(t')(D(t')-D(t))D^r(t)\Bigr\|\\
\le&
\frac{\eta}{1-c}+
\sum_{j=0}^\infty
\sum_{r=0}^{j-1} \bigl\|D^{j-1-r}(t')(D(t')-D(t))D^r(t)\bigr\|\\
\le&
\frac{\eta}{1-c}+\eta\sum_{j=0}^\infty jc^{j-1}\\
\le&
\frac{\eta}{1-c}+\frac{\eta}{(1-c)^2}.
\end{align*}
So $W \in C([0,1], L(X, Y)).$

Moreover, for all $t\in[0,1]$ and $x,x'\in X,$
\begin{align*}
\langle W^*(t)UW(t)x,x'\rangle
=&
\left\langle U\sum_{j=0}^\infty U^{-j}W_0 \Delta_{D(t)}D^j(t)x,\sum_{j=0}^\infty U^{-j}W_0 \Delta_{D(t)}D^j(t)x'\right\rangle\\
=&\sum_{j=0}^\infty \left \langle W_0 \Delta_{D(t)}D^{j+1}(t)x, W_0 \Delta_{D(t)}D^j(t)x'\right \rangle\\
=&\sum_{j=0}^\infty \left\langle \left(I-D^*(t)D(t)\right)D^{j+1}(t)x,D^j(t)x'\right\rangle\\
=&
\sum_{j=0}^\infty \left\langle\left(D^{*j}(t)D^j(t)-D^{*(j+1)}(t)D^{j+1}(t)\right)D(t) x,x'\right \rangle\\
=&\left \langle D(t)x,x' \right \rangle,
\end{align*}
i.e., $W^*(t)UW(t)=D(t),$ as required.
\end{proof}

To carry out the third step of our factorization argument,
we show how to obtain preassigned unitary compressions
of an appropriate diagonal operator.
We will need the next useful statement given in \cite[Proposition 1.1]{PokrzywaJOT},
allowing us to identify a convex combination of a finite number of operators with a compression of their direct sum.
\begin{lemma}\label{pokrzywa}
Let $(A_j)_{j=1}^n$ and $(\tilde A_j)_{j=1}^{n}$ be elements of $L(H)^n$ such that $A_j \usim \tilde A_j, 1 \le j \le n,$ and let $\alpha_1,\dots,\alpha_n\ge 0$, $\sum_{j=1}^n\alpha_j=1$. Then there exists
a subspace $M\subset \bigoplus _{j=1}^n H$ such that
\[ P_M\Bigl(\bigoplus_{j=1}^n A_j\Bigr)\upharpoonright_M \usim  \sum_{j=1}^n\alpha_j \tilde A_j.\]
\end{lemma}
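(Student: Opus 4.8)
The plan is to exhibit $\sum_{j=1}^n \alpha_j \tilde A_j$ directly as a compression of $\bigoplus_{j=1}^n A_j$ by writing down an explicit isometry $V\colon H\to\bigoplus_{j=1}^n H$ and taking $M={\rm Im}\, V$. First I would fix, for each $j$, a unitary $U_j\in L(H)$ implementing $A_j\usim\tilde A_j$, say $U_j^*A_jU_j=\tilde A_j$, and then set
\[
Vx=\bigl(\sqrt{\alpha_1}\,U_1x,\ \sqrt{\alpha_2}\,U_2x,\ \dots,\ \sqrt{\alpha_n}\,U_nx\bigr),\qquad x\in H.
\]
Since $\|Vx\|^2=\sum_{j=1}^n\alpha_j\|U_jx\|^2=\bigl(\sum_{j=1}^n\alpha_j\bigr)\|x\|^2=\|x\|^2$, the map $V$ is an isometry, and $M:={\rm Im}\, V$ is a (closed) subspace of $\bigoplus_{j=1}^n H$; coordinates with $\alpha_j=0$ contribute nothing, so no case distinction is needed.

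Next I would compute the compression. For $x\in H$ one has $\bigl(\bigoplus_{j=1}^n A_j\bigr)Vx=\bigl(\sqrt{\alpha_1}\,A_1U_1x,\dots,\sqrt{\alpha_n}\,A_nU_nx\bigr)$, and since ${\rm Im}\, V=M$ the adjoint $V^*$ annihilates $M^\perp$, so $V^*=V^*P_M$. Applying $V^*$ coordinatewise gives
\[
V^*\Bigl(\bigoplus_{j=1}^n A_j\Bigr)Vx=\sum_{j=1}^n\sqrt{\alpha_j}\,U_j^*\bigl(\sqrt{\alpha_j}\,A_jU_jx\bigr)=\sum_{j=1}^n\alpha_j\,U_j^*A_jU_jx=\sum_{j=1}^n\alpha_j\tilde A_jx.
\]
Thus $V$ is a unitary from $H$ onto $M$ with $V^*\bigl(P_M(\bigoplus_{j=1}^n A_j)\upharpoonright_M\bigr)V=\sum_{j=1}^n\alpha_j\tilde A_j$, which is precisely the asserted unitary equivalence.

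There is no serious obstacle here: the statement reduces to a bookkeeping identity, and the only point deserving a moment's care is the identity $V^*=V^*P_M$ --- equivalently, that $V^*$ restricted to $M$ inverts $V$ --- which is immediate from $M={\rm Im}\, V$ together with $V$ being an isometry. One could equally avoid introducing $V$ by taking $M=\{(\sqrt{\alpha_1}U_1x,\dots,\sqrt{\alpha_n}U_nx):x\in H\}$ and verifying the two required properties by hand, but the isometry formulation is cleaner and matches the ``$V^*AV=D$'' language used throughout the paper.
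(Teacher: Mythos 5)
Your proof is correct. The paper states this lemma without proof, citing Pokrzywa's Proposition~1.1, so there is no in-paper argument to compare against; your explicit isometry $Vx=(\sqrt{\alpha_1}U_1x,\dots,\sqrt{\alpha_n}U_nx)$ with $M={\rm Im}\,V$ is the standard and natural way to verify it, and the two small points that need care --- that $V$ is isometric because $\sum_j\alpha_j=1$, and that $V^*=V^*P_M$ so that $V^*\bigl(\bigoplus_j A_j\bigr)V$ really is the compression to $M$ transported to $H$ --- are both handled.
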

(For a generalization of Lemma \ref{pokrzywa} for operator tuples see \cite[Lemma 6.1]{MT_LMS}.)

The following statement is essentially known,
see e.g. a remark following \cite[Theorem 2.2]{Davidson} and \cite[Theorem 2.1]{Fong1}.
As the constants are crucial here, we provide a detailed and explicit proof.

\begin{proposition}\label{P7}
Let $Y$ be a separable infinite-dimensional Hilbert space, and let $U\in L(Y)$ be
a unitary operator.
For every $\e>0,$  there exist a separable Hilbert space $X,$ a diagonal operator $D_{{\rm diag}}\in L(X)$
with $\|D_{{\rm diag}}\|\le 1+\e,$
and an isometry $V:Y\to X$ such that $V^*D_{{\rm diag}}V=U$.
\end{proposition}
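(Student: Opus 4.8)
The plan is to write $U$ as a norm-limit of finite-rank "truncations'' in a way compatible with Lemma~\ref{pokrzywa}, and then assemble the corresponding direct sum. Since $U$ is unitary, $\sigma(U)\subset\partial\mathbb D$, and by the spectral theorem $U$ is (unitarily equivalent to) a direct integral of multiplication by $z$ on the circle. Concretely, for a large $N$ partition $\partial\mathbb D$ into $N$ arcs $I_1,\dots,I_N$ of length $2\pi/N$, let $E(I_j)$ be the corresponding spectral projections, pick a point $\zeta_j\in I_j$, and set $U_N=\sum_{j=1}^N \zeta_j E(I_j)$. Then $\|U-U_N\|\le 2\pi/N$. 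The operator $U_N$ is a finite linear combination of mutually orthogonal projections, hence it is diagonalizable: it is unitarily equivalent to a diagonal operator whose diagonal entries are the $\zeta_j$'s, each repeated $\dim\operatorname{Im}E(I_j)$ times (allowing $0$ or $\infty$ multiplicities). So each $U_N$ is already a diagonal operator of norm exactly $1$; the issue is that the $U_N$ live on (a priori) different pieces and we want one fixed ambient diagonal operator $D_{\mathrm{diag}}$ together with a single isometry $V$ exactly intertwining $U$, not merely approximately.

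To fix this I would use a telescoping/convexity trick built on Lemma~\ref{pokrzywa}. First choose $N$ with $2\pi/N<\e$ and write $U=U_N+(U-U_N)=U_N+R$ with $\|R\|<\e$. The key observation is that $U_N$ is unitarily equivalent to a block operator of the form $\bigoplus_j \zeta_j I_{K_j}$ with $K_j=\operatorname{Im}E(I_j)$, which after grouping is unitarily equivalent to a diagonal operator $D_0$ with $\|D_0\|=1$ acting on a separable space. Applying Lemma~\ref{pokrzywa} with $n=2$, weights $\alpha_1=\tfrac12,\alpha_2=\tfrac12$, $A_1=\tilde A_1=2U_N$ (diagonal, norm $2$) and $A_2=\tilde A_2=2R$, we realize $U=\tfrac12(2U_N)+\tfrac12(2R)$ as a compression $P_M\bigl(2U_N\oplus 2R\bigr)\!\restriction_M$. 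This is not yet of the claimed form because $2R$ is not diagonal. So instead I would iterate: write $U=U_{N_1}+R_1$, then $R_1$ is an operator with $\|R_1\|<\e$, apply the same truncation to $R_1$ to get $R_1=\lambda_1 U'_{N_2}+R_2$ with $U'_{N_2}$ diagonal, $\|U'_{N_2}\|\le 1$, and $\|R_2\|$ much smaller, and continue. One obtains $U=\sum_{k\ge1} \beta_k D_k + (\text{vanishing remainder})$ with $D_k$ diagonal, $\|D_k\|\le 1$, and $\sum_k|\beta_k|$ controlled. Feeding the (normalized) convex combination into Lemma~\ref{pokrzywa} — which handles finite convex combinations — together with a limiting argument using that the remainders go to $0$ in norm, realizes $U$ as a compression $V^*\bigl(\bigoplus_k D_k\bigr)V$; but $\bigoplus_k D_k$ is itself a diagonal operator, and its norm is $\sup_k\|D_k\|\le 1$, which even beats the bound $1+\e$.

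Actually, there is a cleaner route that I would prefer and that makes the "$1+\e$'' appear naturally. Apply Lemma~\ref{pokrzywa} directly with $n=2$, $\alpha_1=1-\delta$, $\alpha_2=\delta$ for a small $\delta>0$, taking $A_1=\tilde A_1=\tfrac{1}{1-\delta}U_N$ and $A_2=\tilde A_2=\tfrac{1}{\delta}R$, where $U=U_N+R$, $\|R\|<\delta\e$ (choose $N$ accordingly first). Then $\tfrac{1}{1-\delta}U_N$ is diagonal with norm $\tfrac{1}{1-\delta}\le 1+\e$ for $\delta$ small, and $\tfrac{1}{\delta}R$ has norm $<\e$; but $\tfrac1\delta R$ is still not diagonal. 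To make the \emph{second} summand diagonal as well, truncate it too: $\tfrac1\delta R=R'+R''$ with $R'$ diagonal of norm $\le 1$ (by the same spectral-arc argument applied to the normal-part approximation of $R$, or, since $R$ is merely bounded, by first passing to its real and imaginary parts and using that a self-adjoint operator is a norm-limit of diagonalizable ones) and $\|R''\|$ arbitrarily small; iterate. The bookkeeping shows the total operator $D_{\mathrm{diag}}$ can be taken diagonal with $\|D_{\mathrm{diag}}\|\le 1+\e$, and the nested compressions compose to a single isometry $V:Y\to X$ with $V^*D_{\mathrm{diag}}V=U$, using that compressions compose: if $P_{M_1}T\!\restriction_{M_1}\usim S$ and $M_2\subset M_1$ with $P_{M_2}(P_{M_1}T\!\restriction_{M_1})\!\restriction_{M_2}\usim U$, then $P_{M_2}T\!\restriction_{M_2}\usim U$.

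\textbf{Main obstacle.} The conceptual content — approximating $U$ by diagonal operators — is elementary, and Lemma~\ref{pokrzywa} is exactly the tool that converts "approximate'' into "exact compression of a direct sum.'' The real work, and the reason the authors promise "a detailed and explicit proof,'' is the quantitative control: one must choose the arc-subdivision parameters $N_k$ and the convex weights $\beta_k$ (or $\delta$) simultaneously so that (a) every diagonal block has norm $\le 1$, (b) the remainders tend to $0$ in norm fast enough for the limiting argument, and (c) the resulting ambient diagonal operator $D_{\mathrm{diag}}$ has norm $\le 1+\e$ and not just $\le C(\e)$ for some unspecified constant. Keeping these three requirements consistent through the iteration — essentially a convergent-series estimate interleaved with repeated applications of Lemma~\ref{pokrzywa} — is the step that requires care.
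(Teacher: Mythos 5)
Your overall strategy (approximate $U$ by diagonalizable operators and convert the approximation into an exact compression via Lemma~\ref{pokrzywa}) is the same as the paper's, but your realization of it has a genuine gap. The spectral-arc truncation $U_N=\sum_j\zeta_jE(I_j)$ leaves a remainder $R=U-U_N$ that is normal but in general \emph{not} diagonalizable, so your iteration never terminates: you end up with an infinite convex combination $U=\sum_k\beta_k\tilde D_k$, whereas Lemma~\ref{pokrzywa} handles only finite ones. The ``limiting argument'' you invoke to pass from the finite truncations to an exact identity $V^*D_{\rm diag}V=U$ is precisely the hard step and is not supplied: at stage $n$ you obtain $U$ as a compression of $A_1\oplus\dots\oplus A_n\oplus B_n$ onto a subspace $M_n$, but the $M_n$ live in changing ambient spaces and nothing forces them (or the associated isometries) to converge; upgrading ``$U$ is a norm-limit of compressions'' to ``$U$ is a compression'' is exactly the kind of delicate inductive correction the paper spends Propositions~\ref{P4} and~\ref{P6} on in other contexts. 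There is also a concrete slip in your middle paragraph: after rescaling so that Lemma~\ref{pokrzywa} applies, the ambient operator is $\bigoplus_k\beta_k^{-1}D_k$, whose norm is $\sup_k\|D_k\|/\beta_k$, not $\sup_k\|D_k\|\le1$; the claimed bound ``which even beats $1+\e$'' does not follow.

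The paper avoids all of this with one extra input: the Weyl--von Neumann(--Berg) theorem gives $U=\widetilde D+K$ with $\widetilde D$ diagonal, $\|\widetilde D\|\le 1$, and $K$ \emph{compact} with $\|K\|\le\e/2$. Compactness is the point you are missing: $\Re K$ and $\Im K$ are compact selfadjoint, hence \emph{exactly} diagonalizable (no further remainder), so $U$ is a convex combination of just three diagonal operators, $U=\tfrac{1}{1+\e}D_1+\tfrac{\e}{2(1+\e)}D_2+\tfrac{\e}{2(1+\e)}D_3$ with $\|D_j\|\le1+\e$, and a single application of Lemma~\ref{pokrzywa} finishes the proof with no limiting procedure. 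If you want to salvage your route, replace the spectral-arc truncation by Weyl--von Neumann; otherwise you must actually prove an infinite-sum version of Lemma~\ref{pokrzywa} or a convergence statement for the compressing subspaces, neither of which is in your write-up.
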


\begin{proof}
Let $\e>0$ be fixed.
By the Weyl-von Neumann theorem (see e.g. \cite[Theorem 39.4]{Conway_o}), we can write $U=\widetilde D+K$ where $\widetilde D$ is a diagonal operator on $Y$, $\widetilde D={\rm diag}\, [\widetilde d_j],$ where $\sup_{j\ge 1}
|\widetilde d_j|\le 1,$ and $K$ is a compact operator on $Y$ with $\|K\|\le\e/2$.
Then $K=\Re K+i\Im K,$ where $\Re K, \Im K$ are selfadjoint compact operators on $Y$. So $\Re K$ and $\Im K$ are diagonal (in some orthonormal but possibly different bases):
\[
\Re K={\rm diag}\, \left[d_{j}^{(1)} \right], \qquad
\Im K={\rm diag}\, \left[d_{j}^{(2)} \right],  \qquad \sup_{j\ge 1}\left\{|d_{j}^{(1)}|, |d_{j}^{(2)}|
\right \}\le\e/2.
\]
Let
\[
D_1=(1+\e)\widetilde D, \qquad D_2=\frac{2(1+\e)}{\e}\Re K, \qquad D_3=\frac{2i(1+\e)}{\e}\Im K,
\]
and define $X=Y\oplus Y \oplus Y$ and  $D_{{\rm diag}}=D_1\oplus D_2\oplus D_3\in L(X)$. Then $D_{{\rm diag}}$ is a diagonal operator on $X$ such that $\|D_{{\rm diag}}\|\le 1+\e,$ and $U$ is a convex combination of $D_1,D_2$ and $D_3:$
$$
U=
\frac{1}{1+\e}D_1+ \frac{\e}{2(1+\e)}D_2+\frac{\e}{2(1+\e)}D_3.
$$
By Lemma \ref{pokrzywa}, there exists an isometry $V:Y\to X$ satisfying $V^*D_{{\rm diag}}V=U$.
\end{proof}

Combining Propositions \ref{P6}, \ref{P7} and \ref{P8}
and factorizing \eqref{compress} accordingly,
we derive Theorem \ref{T9_intro}, one of the main results of this paper.
Now it becomes a direct consequence of the propositions mentioned  above.
\medskip

\emph{Proof of Theorem \ref{T9_intro}}\,\,
Consider $A \in C([0,1], L(H))$ with $0\in {\rm Int}\, W_e(A(t))$ for all $t\in [0,1]$.
We have to show that if  $D \in C([0,1], L(Z))$
and $D$ is so small that \eqref{t9eq} holds, then
there exists a norm continuous, isometry-valued function $S: [0,1]\to L(Z, H)$
satisfying
$S^*(t)A(t)S(t)=D(t)$ for all $t\in [0,1].$

Choose $b_1$ and $b_2$ such that
$$
\max_{t\in [0,1]}\|D(t)\|<b_1<b_2<\min_{t\in [0,1]}\dist(0,\partial W_e(A(t))).
$$
Let $U\in L(Y)$ be a bilateral shift of infinite multiplicity and $D_{{\rm diag}}\in L(X)$ be the diagonal operator
constructed by Proposition \ref{P7} such that
 $\|D_{{\rm diag}}\|\le b_2/b_1$ and  $J^*D_{{\rm diag}} J=U$ for an isometry $J:Y\to X$.

By Proposition \ref{P6}, there exists an isometry-valued function $V \in C([0,1], L(X,H))$ such that
$$
V^*(t)A(t)V(t)=b_1 D_{{\rm diag}}, \qquad t \in [0,1].
$$

Moreover, by Proposition \ref{P8}, there exists an isometry-valued function $W \in C([0,1], L(Z,Y))$
satisfying
$$
W^*(t)UW(t)= b_1^{-1}D(t), \qquad t \in [0,1].
$$

Let $S(t)=V(t)JW(t), t \in [0,1]$.
By construction, $S \in C([0,1], L(Z, H)),$ $S$  is isometry-valued, and
for all $t\in[0,1],$
\begin{align}
S^*(t)A(t)S(t)=&
W^*(t)J^*V^*(t)A(t)V(t)JW(t)\label{eqq} \\
=&
b_1W^*(t)J^*D_{{\rm diag}} JW(t)
=
b_1W^*(t)UW(t) \notag \\
=&
D(t). \notag
\end{align}
This completes the proof.
$\hfill \qed$

\begin{remark}\label{continu}
It might be instructive to clarify that Theorem \ref{T9_intro}
addresses compressions of $A,$ which are continuous in a natural sense.
 Under the assumptions of Theorem \ref{T9_intro},
denoting the unitary operator $S(t): Z \to {\rm Im}\, S(t)$ by $S_Z(t)$
and observing that $P(t)=S(t)S^*(t):H \to H$ is the projection onto ${\rm Im}\, S(t)$ for every $t \in [0,1],$ the relation \eqref{eqq} rewrites as
\begin{equation}
S_Z^{-1}(t) \left(P(t)A(t) \upharpoonright_{{\rm Im}\, S(t)}\right)S_Z(t)=D(t), \qquad t \in [0,1].
\end{equation}
So $P(t)A(t)\upharpoonright_{{\rm Im}\, S(t)}\usim D(t)$ for all $t\in[0,1]$. Moreover, the spaces ${\rm Im}\, S(t)$ change continuously in the following sense.
If $\mathcal G(H)$ stands for the metric space of subspaces of $H,$ and $g$ denotes  the gap metric in $\mathcal G(H),$
 then
\begin{equation}
\|P(t)-P(s)\|=g({\rm Im} \, S(t), {\rm Im}\, S(s) )\le \| S(t)-S(s)\|, \qquad t,s \in [0,1],
\end{equation}
(see e.g. \cite[Lemma II.10.12 and p. 145, C.10.5]{Muller}), so that $P, S_Z, S_Z^{-1}$ as well
as $G_S: [0,1] \to \mathcal G(H),$
$G_S(t)= {\rm Im}\, S(t),$ are continuous and have modulus of continuity dominated by the modulus of continuity of $S.$
\end{remark}

\section{Smooth compressions}

This section is devoted to the proof of Theorem \ref{partition_intro},
where we replace continuous compressions by smooth ones at the price
of assumptions much stronger than those of Theorem \ref{T9_intro}.
\medskip

\emph{Proof of Theorem \ref{partition_intro}}
Let $r \in \mathbb N$ be fixed.
Given $A\in L(H),$ an open set $\Omega\subset\mathbb C$, and $d \in C^r(\Omega)$ with $d(\Omega)\subset  \Int W_e(A),$ we construct $(u_n)_{n=1}^\infty \subset C^r (\Omega, H)$ such that
$(u_n(\omega))_{n=1}^\infty$ is an orthonormal sequence in $H$ for every $\omega \in \Omega,$
and
\[
\langle Au_n(\omega),u_n(\omega)\rangle=d(\omega), \qquad n\in\mathbb N, \omega\in \Omega.
\]

Moreover, we show
that there exists an infinite-dimensional Hilbert space $X$ and an isometry-valued function $S \in C^r(\Omega, L(X, H))$
satisfying
\[
S^*(\omega) A S(\omega)=d(\omega) I, \qquad \omega \in \Omega.
\]

For $a,a',a''\in \Int W_e(A)$ let $T(a,a',a''):=\hbox{Int}\,\conv\{a,a',a''\}$.
Clearly $\{d^{-1}(T (a,a',a'')) : a,a',a''\in \Int W_e(A)\}$
is an open cover of $\Omega$. Then there exists a $C^\infty$-partition of unity subordinate to this cover, i.e., there exists a sequence $(f_k)_{k=1}^\infty \subset C^\infty(\Omega,[0,1])$ such that
\begin{itemize}
\item[(a)] $\sum_{k=1}^\infty f_k(\omega)=1$ for all $\omega\in \Omega$;
\item[(b)] for each $k\in\mathbb N$ there exist $a_k^{(1)},a_k^{(2)},a_k^{(3)}\in \Int W_e(A)$ such that
$\supp f_k\subset
 d^{-1}\left(T\left(a_k^{(1)},a_k^{(2)},a_k^{(3)}\right)\right)$;
\item[(c)] for each $\omega\in \Omega$ there exists a neighbourhood $\Omega_\omega$ of $\omega$ such that the set $\{k: \supp f_k\cap \Omega_\omega\ne\emptyset\}$ is finite.
\end{itemize}
Using Lemma \ref{codim},(a) find inductively unit vectors $v_{n,k}^{(i)}\in H, n,k\in\mathbb N, i=1,2,3$, satisfying
\begin{equation}\label{ak}
\langle Av_{n,k}^{(i)},v_{n,k}^{(i)}\rangle = a_k^{(i)}
\end{equation}
 for all $n,k,i,$ and
\begin{equation}\label{orthog}
v_{n,k}^{(i)}\perp v_{m,l}^{(j)}, \ Av_{m,l}^{(j)}, \  A^*v_{m,l}^{(j)},
\end{equation}
whenever $(n,k,i)\ne (m,l,j)$.

If $z\in T\left(a_k^{(1)},a_k^{(2)}, a_k^{(3)}\right)$ for some $k \in \mathbb N,$ then $z$ can be written uniquely as a convex combination of $a_k^{(1)},a_k^{(2)}, a_k^{(3)}:$
\[
z=\beta_k^{(1)}(z)a_k^{(1)}+\beta_k^{(2)}(z)a_k^{(2)}+\beta_k^{(3)}(z)a_k^{(3)},
\]
 where  $\beta_k^{(1)},\beta_k^{(2)}, \beta_k^{(3)}$ are barycentric coordinates
of $z$ with respect to $a_k^{(1)},a_k^{(2)},$ and $ a_k^{(3)}.$
It is easy to see that $\beta_k^{(1)},\beta_k^{(2)}$ and $\beta_k^{(3)}$ are linear functions
 of the Cartesian coordinates of $a_k^{(1)},a_k^{(2)},$ and $ a_k^{(3)}$ (see e.g. \cite[Section 2]{Floater})
thus
 $\{\beta^{(j)}_k: 1 \le j \le 3\} \subset
C^\infty \Bigl(T\left(a_k^{(1)},a_k^{(2)},a_k^{(3)}\right), [0,1]\Bigr)$
and $\sum_{j=1}^3\beta_k^{(j)}(z)=1$ for all
$z \in T\left(a_k^{(1)},a_k^{(2)}, a_k^{(3)}\right)$.

Define now the functions $u_n:\Omega\to H, n \in \mathbb N,$ by
$$
u_n(\omega)=\sum_{k=1}^\infty \sqrt{f_k(\omega)}\left(\sqrt{\beta_k^{(1)}\bigl(d(\omega)\bigr)}v_{n,k}^{(1)}+\sqrt{\beta_k^{(2)}\bigl(d(\omega)\bigr)}v_{n,k}^{(2)}+\sqrt{\beta_k^{(3)}\bigl(d(\omega)\bigr)}v_{n,k}^{(3)}\right).
$$
By construction, $u_n \in C^r(\Omega)$ for every $n \in \mathbb N.$
Moreover, using \eqref{orthog}, we infer that (ii) holds. Since
$$
\|u_n(\omega)\|^2=\sum_{k=1}^\infty f_k(\omega)\left(\beta_k^{(1)}\bigl(d(\omega)\bigr)+\beta_k^{(2)}\bigl(d(\omega)\bigr)+\beta_k^{(3)}\bigl(d(\omega)\bigr)\right)=
\sum_{k=1}^\infty f_k(\omega)=1
$$
for all $n\in\mathbb N$ and $\omega \in \Omega,$ condition (i) follows.
Moreover, in view of \eqref{ak}, we have
\begin{align*}
&\langle Au_n(\omega),u_n(\omega)\rangle\\
=&
\sum_{k=1}^\infty f_k(\omega)\left(\beta_k^{(1)}\bigl(d(\omega)\bigr)\langle Av_{n,k}^{(1)},v_{n,k}^{(1)}\rangle+\beta_k^{(2)}\bigl(d(\omega)\bigr)\langle Av_{n,k}^{(2)},v_{n,k}^{(2)}\rangle+
\beta_k^{(3)}\bigl(d(\omega)\bigr)\langle Av_{n,k}^{(3)},v_{n,k}^{(3)}\rangle
\right)\\
=&
\sum_{k=1}^\infty f_k(\omega)
\left(\beta_k^{(1)}(d(\omega))a_k^{(1)}+\beta_k^{(2)}(d(\omega))a_k^{(2)}+\beta_k^{(3)}(d(\omega))a_k^{(3)}\right)\\
=&
\sum_{k=1}^\infty f_k(\omega)d(\omega)
=d(\omega)
\end{align*}
for all $n\in\mathbb N$ and $\omega \in \Omega$. Thus, (iii) is satisfied as well.

Let $\omega\in \Omega$.
If $X$ is a Hilbert space with the orthonormal basis $(e_n)_{n=1}^\infty,$
then setting
\[
S(\omega) e_n= u_n(\omega), \qquad \omega \in \Omega,
\]
we obtain an isometry-valued function $S$ on $\Omega$ such that
\[
S^*(\omega) A S(\omega)=d(\omega) I, \qquad \omega \in \Omega.
\]

Finally, let us show that $S$ has the required smoothness. Recalling that for $\omega\in \Omega$ there is a neighborhood $\Omega_\omega$
of $\omega$ such that the set
$F_\omega:=\{k: \supp f_k\cap \Omega_\omega \neq \emptyset
\}$ is nonempty and finite, we let for every $n\in\mathbb N:$
$$
M_n:=\bigvee\{v_{n,k}^{(1)}, v_{n,k}^{(2)}, v_{n,k}^{(3)}:\, k\in F_\omega\}.
$$
Then $\dim M_n=3 \, \text{card} \, F_\omega<\infty$ for all $n,$ and the subspaces $M_n, n \in \mathbb N,$ are mutually orthogonal.
Let $U_n:M_1\to M_n$ be the unitary operator defined by $U_nv_{1,k}^{(i)}=v_{n,k}^{(i)},$ for all
$k\in F_\omega$ and $i=1,2,3.$
Then for all $\omega'\in \Omega_\omega$ and $x \in X,$
we have
\[
S(\omega')(x)=
 \sum_{n=1}^{\infty}\langle x, e_n\rangle u_n(\omega')
= \sum_{n=1}^{\infty}\langle x, e_n\rangle U_n u_1(\omega').
\]
Hence, arguing inductively,
for every $k \in \mathbb N, 1 \le k \le r,$
\begin{equation}
S^{(k)}(\omega')(x)= \sum_{n=1}^{\infty}\langle x, e_n\rangle U_n u_1^{(k)}(\omega'), \qquad \omega' \in \Omega,
\end{equation}
with $S^{(r)} \in C(\Omega, L(X, H)),$ i.e. $S \in C^r (\Omega, L(X, H))$ since the choice of $\omega$ and $\omega'$ was arbitrary.

The cases  $r=\infty$ and $r=0$ can be considered completely analogously.
$\hfill \qed$

\begin{remark}\label{analy}
It is easy to see that one cannot replace smoothness of $u$ and $d$ in
Theorem \ref{partition_intro} by their analyticity.
Indeed, if $A \in L(H)$ and, for instance, one has
$\langle A u(z), u(z) \rangle=d(z)$ for $z \in \mathbb D$, with $u$ and $d$
analytic in $\mathbb D$, then differentiating gives
$2\,{\rm Re}\,\langle A u'(z), u(z) \rangle = d'(z)$ for $z \in \mathbb D$.
The open mapping property then yields that $d'$ is constant, so $d$ is
necessarily a linear function.

\end{remark}

\begin{remark}
Along the lines of the proof above, one can show
that if $A\in L(H),$ $r \in \mathbb N \cup \{0, \infty\},$
 and $D:\Omega\to L(X)$ is a $C^r$-function such that $\Int W_e(A)\supset \{z: |z|\le\sup_{\omega\in\Omega} \|D(\omega)\|\},$ then there exists an isometry-valued $C^r$-function $S:\Omega\to L(X,H)$ satisfying
$S^*(\omega)AS(\omega)=D(\omega)$
for all $\omega\in\Omega$. However, this statement is technically more demanding
and, apart from Bourin's theorem, relies on an appropriate modification of Proposition \ref{P8}.
To keep this paper within reasonable limits, its proof will be given elsewhere.
\end{remark}

\section{Diagonals of operator-valued functions}\label{diag_fun}

In this section we will prove Theorem \ref{diagonal_intro}, one of the main results of this paper.
We start with the next auxiliary lemma.

\begin{lemma}\label{numrangel}
Let $A \in C([0,1],L(H))$.
Then there exist mutually orthogonal subspaces $M_k\subset H, k\in\mathbb N,$ such that
\begin{equation}\label{numrange}
W_e\bigl(P_{M_k}A(t){\upharpoonright}_{M_k}\bigr)=W_e(A(t))
\end{equation}
for all $k\in\mathbb N$ and $t\in[0,1]$.
\end{lemma}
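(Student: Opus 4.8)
The plan is to build a single orthonormal sequence $(e_i)_{i=1}^\infty$ in $H$ together with a partition $\mathbb N=\bigsqcup_k I_k$ into infinite sets, and then put $M_k=\bigvee\{e_i:i\in I_k\}$. Mutual orthogonality and infinite-dimensionality of the $M_k$ will then be automatic, and the whole content lies in arranging the diagonal entries $\langle A(s)e_i,e_i\rangle$ to be dense in $W_e(A(s))$ along each $I_k$, first for a countable dense set of parameters $s$, and then to transfer the conclusion to every $t\in[0,1]$ by continuity. Note that $\bigoplus_k M_k=H$ is neither required nor claimed, which is what keeps the construction light.

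First I would reduce to countably many parameters. Fix a countable dense set $D\subset[0,1]$ and, for each $s\in D$, a countable dense subset $\{\lambda_{s,\ell}:\ell\in\mathbb N\}$ of the compact convex set $W_e(A(s))$. Enumerate the ``tasks'' $\mathbb N\times D\times\mathbb N$ as a sequence $(\tau_i)_{i\ge 1}$, $\tau_i=(k_i,s_i,\ell_i)$, in such a way that every task is repeated infinitely often. The inclusion $W_e(P_MA(s){\upharpoonright}_M)\subseteq W_e(A(s))$ holds for an \emph{arbitrary} subspace $M$ (immediate from the orthonormal-sequence definition of $W_e$, since $\langle P_MA(s)u,u\rangle=\langle A(s)u,u\rangle$ for $u\in M$), so it suffices to produce, for each $k$ and each $s\in D$, orthonormal vectors inside $M_k$ whose $A(s)$-diagonal converges to $\lambda_{s,\ell}$ for every $\ell$; density of $\{\lambda_{s,\ell}\}_\ell$ and closedness of $W_e$ then yield $W_e(A(s))\subseteq W_e(P_{M_k}A(s){\upharpoonright}_{M_k})$, hence equality.

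Next comes the inductive construction of $(e_i)$. Given orthonormal $e_1,\dots,e_{i-1}$, the subspace $M=\{e_1,\dots,e_{i-1}\}^\perp$ has finite codimension and $\lambda_{s_i,\ell_i}\in W_e(A(s_i))$, so Lemma \ref{codim}(a) supplies a unit vector $e_i\in M$ with $|\langle A(s_i)e_i,e_i\rangle-\lambda_{s_i,\ell_i}|<1/i$. Set $I_k=\{i:k_i=k\}$ and $M_k=\bigvee\{e_i:i\in I_k\}$. Each $I_k$ is infinite (tasks with first coordinate $k$ recur infinitely often), so each $M_k$ is infinite-dimensional, and the $M_k$ are pairwise orthogonal. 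For fixed $k$, $s\in D$ and $\ell$, along the infinite index set $\{i\in I_k:\tau_i=(k,s,\ell)\}$ we have $\langle A(s)e_i,e_i\rangle\to\lambda_{s,\ell}$ with all these $e_i$ in $M_k$, so $\lambda_{s,\ell}\in W_e(P_{M_k}A(s){\upharpoonright}_{M_k})$; by the reduction above this gives $W_e(P_{M_k}A(s){\upharpoonright}_{M_k})=W_e(A(s))$ for all $s\in D$.

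Finally I would drop the restriction $s\in D$. For arbitrary $t\in[0,1]$ choose $s_n\in D$ with $s_n\to t$; since $A$ is norm-continuous, $P_{M_k}A(s_n){\upharpoonright}_{M_k}\to P_{M_k}A(t){\upharpoonright}_{M_k}$ in norm, and by the Hausdorff--Lipschitz estimate for $W_e$ recalled in Section \ref{prelim} both $W_e(P_{M_k}A(s_n){\upharpoonright}_{M_k})\to W_e(P_{M_k}A(t){\upharpoonright}_{M_k})$ and $W_e(A(s_n))\to W_e(A(t))$. As the two sides coincide for every $n$, they coincide in the limit, which is \eqref{numrange}. I expect the only delicate point to be the bookkeeping in the enumeration of tasks (making sure every $(k,s,\ell)$ recurs infinitely often so that each $M_k$ genuinely captures a full dense subset of each $W_e(A(s))$); the remaining ingredients are a single-step application of Lemma \ref{codim}(a) and a soft continuity argument.
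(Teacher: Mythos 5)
Your proposal is correct and follows essentially the same route as the paper: a diagonal enumeration of tasks (subspace index, dense parameter set in $[0,1]$, dense subset of $W_e(A(s))$), an inductive application of Lemma \ref{codim}(a) to produce an orthonormal sequence realizing each target value with vanishing error along each recurring task, and a final transfer to all $t\in[0,1]$ via the Lipschitz continuity of $A\mapsto W_e(A)$ in the Hausdorff metric. The only cosmetic difference is the bookkeeping of the accuracy (the paper carries an explicit accuracy index in the enumeration, while you use $1/i$ at step $i$ together with infinite recurrence of each task), and both devices work.
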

\begin{proof}
Let $Q$ be the set of all rational numbers in $[0,1]$.
For each $q\in Q$ choose a countable set $S_q$ dense in $W_e(A(q))$.
Let $S=\left\{(q,l): q\in Q, l\in S_q\right\}$.
Let $f:\mathbb N\to S\times \mathbb N\times\mathbb N$ be a bijection. Let
\[
f_1:\mathbb N\to Q, \, f_2:\mathbb N\to \cup_{q \in Q} S_q, \, f_3, \, f_4:\mathbb N\to\mathbb N
\]
 be the functions satisfying
$$
f(n)=\bigl((f_1(n),f_2(n)),f_3(n),f_4(n)\bigr)
$$
for all $n\in\mathbb N$. Using Lemma \ref{codim}, (a), construct inductively mutually orthogonal unit vectors $x_n\in H, n \in \mathbb N,$ such that
$$
\left|\left\langle A(f_1(n))x_n,x_n\right \rangle-f_2(n)\right|<\frac{1}{f_3(n)}
$$
for all $n \in \mathbb N.$ For every $k \in \mathbb N,$ set $M_k=\bigvee\{x_n: f_4(n)=k\}$. Clearly the subspaces $M_k, k \in \mathbb N,$ are infinite-dimensional and mutually orthogonal. Fix $k\in\mathbb N$, $q\in Q$ and $l\in S_q$.
For every $j\in\mathbb N$ there exists $n_j$ with
$f_1(n_j)=q$, $f_2(n_j)=l$, $f_3(n_j)=j$ and $f_4(n_j)=k$. So
$$
|\langle A(q)x_{n_j},x_{n_j}\rangle-l|<j^{-1}, \qquad j \ge 1,
$$
and the vectors $(x_{n_j})_{j=1}^{\infty}$ form an orthonormal sequence.
Therefore, $l\in W_e(P_{M_k}A(q){\upharpoonright}_{M_k})$. Since $S_q$ is dense in $W_e(A(q)),$
we have $W_e(A(q))=W_e(P_{M_k}A(q){\upharpoonright}_{M_k}).$ Hence, as $Q$ is dense in $[0,1]$ and the map
$A \to W_e(A)$ is continuous (see Section \ref{prelim}),
we infer that \eqref{numrange} holds.
\end{proof}

We proceed
 with the next version of  Proposition \ref{P4}.  Here, identifying $v(t)$ with rank one isometries $V(t)$, we do not assume that $\dim Z(V)<\infty$ as in Proposition \ref{P4}, which causes some technical complications.
Following the strategy of the proof of Theorem \ref{T9_intro}, this statement will be needed to prepare the inductive step in the proof of Theorem \ref{diagonal_intro}.

It will be convenient to denote by $S_H$ the unit sphere in $H$, i.e. $S_H=\{x\in H: \|x\|=1\}$.

\begin{proposition}\label{6.1}
Let $A\in C([0,1], L(H)),$
and let $d \in C([0,1])$
be such that $d(t)\in {\rm Int}\, W_e(A(t))$ for all $t\in[0,1]$. Let
\[\theta=\min\left\{\dist (d(t),\partial W_e(A(t))): 0\le t\le 1\right\}.\]
Given $\e\in (0,\theta)$ suppose that $v \in C([0,1],  S_H)$ satisfies
$$
\bigl |\langle A(t)v(t),v(t)\rangle-d(t)\bigr|\le \e, \qquad t \in [0,1].
$$
Then for every $\e'\in (0,\e)$ and every
subspace $M\subset H$ such that
\[
W_e(P_MA(t){\upharpoonright}_M)=W_e(A(t)), \qquad t\in [0,1],
\]
there exists $\widetilde v \in C([0,1], H)$ satisfying for all $t \in [0,1],$
\begin{itemize}
\item [(i)] $\bigl| \langle A(t)\widetilde v(t),\widetilde v(t)\rangle-d(t)\bigr|\le \e',$
\item [(ii)] $\|\widetilde v(t)-v(t)\|\le 3\sqrt{\frac{\e}{\theta}},$\\
\item [(iii)] $\widetilde v(t)\in \left(v(t)\vee M\right).$
\end{itemize}
\end{proposition}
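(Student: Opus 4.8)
The plan is to mimic the structure of the proof of Proposition~\ref{P4}, but adapted to the rank-one situation, where the auxiliary compression will itself be a rank-one isometry (i.e.\ a unit vector) rather than a finite-rank one. First I would introduce the correction parameter $a=\e/\theta$ and the ``corrected target''
\[
\widetilde d(t)=\frac{d(t)+(a-1)\langle A(t)v(t),v(t)\rangle}{a},\qquad t\in[0,1],
\]
and check, exactly as in Proposition~\ref{P4}, that $\dist\bigl(\widetilde d(t),\partial W_e(A(t))\bigr)>0$ for all $t$ (using $|\langle A(t)v(t),v(t)\rangle-d(t)|\le\e$ and the definition of $\theta$), so that $\widetilde d(t)\in\Int W_e(A(t))$ on all of $[0,1]$, with a uniform positive distance to the boundary. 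The point of the correction is that the convex combination $(1-a)\langle A(t)v(t),v(t)\rangle+a\,\widetilde d(t)$ collapses back to $d(t)$.

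Next I would produce a continuous rank-one ``remainder'' $r\in C([0,1],S_H)$ with $\langle A(t)r(t),r(t)\rangle$ close to $\widetilde d(t)$ and with $r(t)$ essentially orthogonal to the relevant finite-dimensional data. Concretely: the sets $\{A(t)v(t):t\in[0,1]\}$ and $\{A^*(t)v(t):t\in[0,1]\}$ are compact, so by Lemma~\ref{comp} one can pass to a subspace $M'\subset M$ of finite codimension on which matrix coefficients against $A(t)v(t)$ and $A^*(t)v(t)$ are uniformly below any prescribed small number; since $W_e(P_{M'}A(t){\upharpoonright}_{M'})=W_e(A(t))$ by Lemma~\ref{codim}(a), I can apply Lemma~\ref{L2} with a one-dimensional $X$ and target $\widetilde d$ to get an isometry-valued $R\in C([0,1],L(\mathbb{C},H))$, i.e.\ a function $r\in C([0,1],S_H)$ with $Z(r)\subset M'$ and $|\langle A(t)r(t),r(t)\rangle-\widetilde d(t)|\le\e'/2$, say. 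Crucially $r(t)\perp v(t)$ for every $t$ (because $Z(r)\subset M\subset v(t)^\perp$ is not automatic, but one should instead take $M'\subset M\cap Z(v)^\perp$; more precisely include $v(t)$, $A(t)v(t)$, $A^*(t)v(t)$ in the data controlled by Lemma~\ref{comp}, using compactness of $\{v(t):t\in[0,1]\}$ as well). Then set
\[
\widetilde v(t)=\sqrt{1-a}\,v(t)+\sqrt{a}\,r(t),\qquad t\in[0,1],
\]
which is continuous, lies in $v(t)\vee M$ (giving (iii)), and has $\|\widetilde v(t)\|^2=(1-a)+a=1$.

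It remains to verify (i) and (ii). For (ii), $\|\widetilde v(t)-v(t)\|^2=(1-\sqrt{1-a})^2+a=2-2\sqrt{1-a}\le 2a$, hence $\|\widetilde v(t)-v(t)\|\le\sqrt{2a}=\sqrt{2\e/\theta}\le 3\sqrt{\e/\theta}$. For (i), expand
\[
\langle A(t)\widetilde v(t),\widetilde v(t)\rangle
=(1-a)\langle A(t)v(t),v(t)\rangle+a\langle A(t)r(t),r(t)\rangle
+\sqrt{a(1-a)}\bigl(\langle A(t)v(t),r(t)\rangle+\langle A(t)r(t),v(t)\rangle\bigr).
\]
The cross terms are controlled by the matrix-coefficient bounds arranged via Lemma~\ref{comp} (their moduli are at most a prescribed tiny $\eta$, since $r(t)\in M'$ and $v(t)\in M'^{\perp}$ up to $\eta$), and the main term equals $(1-a)\langle A(t)v(t),v(t)\rangle+a\widetilde d(t)$ up to $a\cdot(\e'/2)\le\e'/2$, which by the definition of $\widetilde d(t)$ is exactly $d(t)$. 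Choosing $\eta$ small enough at the Lemma~\ref{comp} step so that all error contributions together are $\le\e'$ yields (i). I expect the main obstacle to be the bookkeeping around orthogonality: since here $\dim Z(v)$ need not be finite (unlike in Proposition~\ref{P4}), one cannot simply work in $Z(v)^\perp$, and the essential-orthogonality of $r(t)$ to the moving vectors $v(t)$, $A(t)v(t)$, $A^*(t)v(t)$ must be obtained uniformly in $t$ purely from compactness of these curves plus Lemma~\ref{comp}; this is precisely the ``technical complication'' flagged before the statement, and getting the constants to line up so that the final bound is $3\sqrt{\e/\theta}$ rather than something larger requires a little care in splitting $\e'$ and $\eta$.
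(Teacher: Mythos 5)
Your proposal follows the paper's route almost exactly: the same correction $a=\e/\theta$, $\widetilde d(t)=a^{-1}\bigl(d(t)+(a-1)\langle A(t)v(t),v(t)\rangle\bigr)$, the same use of Lemma~\ref{comp} on the compact curves $A(t)v(t)$, $A^*(t)v(t)$ to arrange almost-orthogonality, Lemma~\ref{L2} with one-dimensional $X$ to produce $r$, and the convex-combination ansatz $\sqrt{1-a}\,v(t)+\sqrt a\,r(t)$. The estimates for (iii) and for the cross terms in (i) are in order.

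The genuine gap is your claim that $\|\widetilde v(t)\|^2=(1-a)+a=1$. This requires \emph{exact} orthogonality $r(t)\perp v(t)$ for every $t$, and neither of your two suggested fixes delivers it. Taking $M'\subset M\cap Z(v)^\perp$ fails because $Z(v)=\bigvee_t v(t)$ is in general infinite-dimensional (this is exactly the complication the paper flags before the statement), so $M\cap Z(v)^\perp$ need not have finite codimension in $M$ and Lemma~\ref{L2} is not applicable there; nor is there any reason for $W_e$ to be preserved on that subspace. Adding $v(t)$ to the data controlled by Lemma~\ref{comp} — which is what the paper does — only gives $|\langle v(t),m\rangle|\le\e'/8$ for unit $m\in\widetilde M$, hence
$\|v'(t)\|^2=1+2\sqrt{a(1-a)}\,\Re\langle v(t),r(t)\rangle$ is only \emph{approximately} $1$. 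One must therefore normalize, $\widetilde v(t)=v'(t)/\|v'(t)\|$, and propagate the resulting perturbations: $\|\widetilde v(t)-v'(t)\|\le\e'/4$ feeds into both (i) (an extra $3\e'/4$ on top of the $\e'/4$ you obtained for $v'$) and (ii) (an extra $\e'/4$ on top of $\|v'(t)-v(t)\|\le 2\sqrt a$). The latter is absorbed into the constant $3$ only after the harmless normalization $\e'\le 4\sqrt{\e/\theta}$, which the paper imposes at the outset and which your argument is missing. Without this normalization step your $\widetilde v$ is not a unit vector, which also breaks the downstream iteration in Theorem~\ref{T12}, where the approximants must stay in $S_H$ so that the limit $u$ is a unit vector.
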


\begin{proof}
Without loss of generality we may assume that $d\equiv 0$ (if not, then replace $A$ by $A-d$).
We can also assume that $\|A(t)\|\le 1$ for all $t\in [0,1]$ and
$\e'\le 4\sqrt{\frac{\e}{\theta}}$.

The set $\left\{v(t), A(t)v(t), A^*(t)v(t):0\le t\le 1\right\}$ is compact.
So by Lemma \ref{comp}, there exists a subspace $\widetilde M\subset M$ such that $\dim M/\widetilde M<\infty$, and for all $m\in\widetilde M$ with $\|m\|=1$ and all $t\in[0,1]$ we have
\begin{align*}
|\langle v(t),m\rangle|\le& \e'/8,\\
|\langle A(t)v(t),m\rangle|\le& \e'/8,\\
|\langle A^*(t)v(t),m\rangle|\le& \e'/8.
\end{align*}

Let
$$a=\e \theta^{-1} \qquad \text{and} \qquad  \widetilde d(t):=a^{-1}(a-1) \left\langle A(t)v(t),v(t)\right \rangle, \qquad t \in [0,1].
$$
Clearly $\widetilde d \in C([0,1])$
and
\[
|\widetilde d(t)|\le\frac{(1-a)\e}{a}<\frac{\e}{a}=\theta, \qquad t\in [0,1].
\]

By Lemma \ref{L2}, there exists $r \in C([0,1], S_{\widetilde M})$ such that for all $t \in [0,1],$
$$
\bigl|\bigl\langle A(t)r(t),r(t)\bigr\rangle-\widetilde d(t)\bigr|\le\frac{\e'}{8}.
$$

For every $t \in [0,1]$ let
\[
v'(t)=\sqrt{1-a}\,v(t)+\sqrt{a}\,r(t).
\]
Then $v' \in C([0,1], H)$
and for all $t \in [0,1],$
\begin{align*}
&\bigl|\langle A(t)v'(t),v'(t)\rangle\bigr|\\
=&
\Bigl|(1-a) \bigl\langle A(t)v(t),v(t)\rangle +a \langle A(t)r(t),r(t)\bigr\rangle+
2\sqrt{a(1-a)}\Re\bigl\langle A(t)v(t),r(t)\bigr\rangle\Bigr|\\
\le&
\Bigl| (1-a)\langle A(t)v(t),v(t)\rangle+a\widetilde d(t)\Bigr|+\frac{\e'}{8}+\sqrt{a(1-a)}\cdot \frac{2\e'}{8}
\le\frac{\e'}{4},
\end{align*}
since $\sqrt{a(1-a)}\le 1/2$.
Moreover, for all $t,t' \in [0,1],$
\[
\|v'(t)-v(t)\|\le 1-\sqrt{1-a}+\sqrt{a}\le 2\sqrt{a}\le 2\sqrt{\frac{\e}{\theta}}.
\]

Fix $t\in [0,1].$ We have
\begin{align*}
\|v'(t)\|^2=&
(1-a)\|v(t)\|^2+a\|r(t)\|^2+2\sqrt{a(1-a)}\Re\langle v(t),r(t)\rangle\\
=& 1+2\sqrt{a(1-a)}\Re\langle v(t),r(t)\rangle.
\end{align*}
So
\[
\bigl|1-\|v'(t)\|\bigr|\le \bigl|1-\|v'(t)\|^2\bigr|\le\frac{\e'}{8}.
\]

Let $\widetilde v(t)=\frac{v'(t)}{\|v'(t)\|}$. Then
$\widetilde v(t)\in S_H$ and
$$
\|\widetilde v(t)-v'(t)\|\le \frac{1}{1-\e'/8}-1\le \e'/4.
$$
Hence
$$
\|\widetilde v(t)-v(t)\|\le
\|\widetilde v(t)-v'(t)\|+\|v'(t)-v(t)\|\le \e'/4+2\sqrt{\frac{\e}{\theta}}\le 3\sqrt{\frac{\e}{\theta}}
$$
and
\begin{align*}
\left|\left\langle A(t)\widetilde v(t),\widetilde v(t)\right\rangle\right|
\le&
\left|\left \langle A(t)(\widetilde v(t)-v'(t)),\widetilde v(t)\right\rangle\right|\\
+&
\left|\left\langle A(t)v'(t),\widetilde v(t)-v'(t)\right\rangle\right|+
\left|\left \langle A(t)v'(t),v'(t)\right\rangle\right|\\
\le&
\frac{\e'}{4}+\frac{\e'}{2}+\frac{\e'}{4}=\e',
\end{align*}
so that (ii) and (i) hold.
Moreover, by construction, it is direct that $\widetilde v(t)\in \left(M\vee v(t)\right)$ for all $t\in[0,1],$ and (iii) holds as well.
\end{proof}

The next statement will complement Theorem \ref{T5} in realizing an inductive step
in the proof of Theorem \ref{diagonal_intro}. At the same time, its proof
also relies on an inductive argument, which is made possible by Proposition \ref{6.1}.

\begin{theorem}\label{T12}
Let $A \in C([0,1], L(H)),$  $d \in C([0,1]),$ and $v \in C([0,1], S_H)$ satisfy the assumptions of Proposition \ref{6.1}.
If $M\subset H$ is a subspace such that $W_e(P_MA(t){\upharpoonright}_M)=W_e(A(t)),$ $t \in [0,1],$
then there exists $u \in C([0,1], H)$ such that for all $t \in [0,1],$
\begin{itemize}
\item [(i)]
$\langle A(t)u(t),u(t)\rangle=d(t), \quad \|u(t)\|=1;$
\item [(ii)] $\|u(t)-v(t)\|\le  12\sqrt{\frac{\e}{\theta}};$
\item [(iii)] $u(t)\in \left(v(t)\vee M\right);$
\end{itemize}
\end{theorem}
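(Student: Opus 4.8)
The plan is to realize $u$ as the uniform limit of a sequence $(v_n)_{n=0}^\infty$ of $S_H$-valued functions obtained by iterating Proposition \ref{6.1} with geometrically decaying tolerances; all the substantial work — the quantitative one-step improvement, the bound on the displacement, and the confinement of the new vector to $v(t)\vee M$ — already resides in Proposition \ref{6.1}, so what is left is essentially the bookkeeping of a telescoping series, in the same spirit in which Theorem \ref{T5} was deduced from Lemma \ref{L2} and Proposition \ref{P4}.

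Concretely, I would set $\e_0:=\e$, $\e_n:=4^{-n}\e$ for $n\ge1$ (so that $\e_n\in(0,\theta)$ and $\e_{n+1}\in(0,\e_n)$ for all $n$), and $v_0:=v$. Assume $v_n\in C([0,1],S_H)$ has been constructed with $|\langle A(t)v_n(t),v_n(t)\rangle-d(t)|\le\e_n$ and $v_n(t)\in v(t)\vee M$ for all $t\in[0,1]$. Applying Proposition \ref{6.1} with the error bound $\e_n$ for $v_n$, the sharper bound $\e_{n+1}$, and the subspace $M$ (which still satisfies $W_e(P_MA(t){\upharpoonright}_M)=W_e(A(t))$), I obtain $v_{n+1}\in C([0,1],H)$ with $|\langle A(t)v_{n+1}(t),v_{n+1}(t)\rangle-d(t)|\le\e_{n+1}$, $\|v_{n+1}(t)-v_n(t)\|\le 3\sqrt{\frac{\e_n}{\theta}}$, and $v_{n+1}(t)\in v_n(t)\vee M$ for all $t$. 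Two small points are checked here: the function delivered by Proposition \ref{6.1} is $S_H$-valued (evident from its proof, where it is $v'(t)/\|v'(t)\|$), so the induction hypothesis $v_{n+1}\in C([0,1],S_H)$ survives; and the containment propagates, since $v_{n+1}(t)\in v_n(t)\vee M\subset(v(t)\vee M)\vee M=v(t)\vee M$.

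From $\|v_{n+1}(t)-v_n(t)\|\le 3\sqrt{\frac{\e}{\theta}}\,2^{-n}$, uniformly in $t$, the sequence $(v_n)$ is uniformly Cauchy in $C([0,1],H)$, so $u(t):=\lim_{n\to\infty}v_n(t)$ defines $u\in C([0,1],H)$, and
\[
\|u(t)-v(t)\|\le\sum_{n=0}^\infty\|v_{n+1}(t)-v_n(t)\|\le 3\sqrt{\frac{\e}{\theta}}\sum_{n=0}^\infty 2^{-n}=6\sqrt{\frac{\e}{\theta}}\le 12\sqrt{\frac{\e}{\theta}},
\]
which is (ii). Each $v_n(t)$ lies in the closed subspace $v(t)\vee M$, hence so does $u(t)$, giving (iii). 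Finally $\|u(t)\|=\lim_n\|v_n(t)\|=1$, and since $v_n(t)\to u(t)$ in norm while $|\langle A(t)v_n(t),v_n(t)\rangle-d(t)|\le\e_n\to0$, the continuity of $x\mapsto\langle A(t)x,x\rangle$ forces $\langle A(t)u(t),u(t)\rangle=d(t)$; this is (i).

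The only genuine obstacle beyond invoking Proposition \ref{6.1} is to chain its one-step estimates correctly: the tolerances $\e_n$ must decrease — which Proposition \ref{6.1} requires only in the weak form $\e_{n+1}<\e_n$ — yet the partial sums $\sum_n\sqrt{\e_n}$ must stay below $4\sqrt\e$ so that the accumulated displacement does not exceed the permitted $12\sqrt{\e/\theta}$. The geometric choice $\e_n=4^{-n}\e$ achieves both, with a comfortable margin.
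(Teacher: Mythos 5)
Your proposal is correct and follows essentially the same route as the paper: iterate Proposition \ref{6.1} with geometrically decaying tolerances starting from $v_0=v$, pass to the uniform limit, and verify (i)--(iii) by telescoping (the paper uses $\e_n=2^{-n}\e$ and bounds the displacement by $3\sqrt{\e/\theta}\sum_n 2^{-n/2}\le 12\sqrt{\e/\theta}$, while your choice $\e_n=4^{-n}\e$ gives the slightly better $6\sqrt{\e/\theta}$). Your two explicit checks — that the output of Proposition \ref{6.1} is $S_H$-valued and that the containment $v_{n+1}(t)\in v_n(t)\vee M\subset v(t)\vee M$ propagates — are exactly the points the paper handles implicitly, so nothing is missing.
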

\begin{proof}
Without loss of generality we may assume that $d\equiv 0$.

Setting $v_0=v,$ we use Proposition \ref{6.1} to construct inductively   continuous mappings $v_n:[0,1]\to S_H, n \in \mathbb N,$ such that
\[
\bigl|\langle A(t)v_n(t),v_n(t)\rangle\bigr|\le\e 2^{-n},
\qquad
\|v_{n+1}(t)-v_n(t)\|\le 3\sqrt{\frac{\e}{\theta 2^n}}
\]
and $v_n(t)\in \left(v(t)\vee M\right)$ for all $n \in \mathbb N$ and $t \in [0,1]$.

Let
\[
u(t)=\lim_{n\to\infty}v_n(t), \qquad t \in [0,1].
\]
Then, for every $t \in [0,1],$
$\langle A(t)u(t),u(t)\rangle=0$ and
\begin{align*}
\|u(t)-v(t)\|\le&\sum_{n=0}^\infty\|v_{n+1}(t)-v_n(t)\|\le\sum_{n=0}^\infty 3\sqrt{\frac{\e}{\theta 2^n}}\\
=&3\sqrt{\frac{\e}{\theta}}\cdot\sum_{n=0}^\infty 2^{-n/2}\le
12\sqrt{\frac{\e}{\theta}}.
\end{align*}
Moreover, by construction, $u(t)\in \left(M\vee v(t)\right)$ for all $t\in[0,1]$.
\end{proof}

Now we are ready to prove Theorem \ref{diagonal_intro}
via an inductive argument based
on  Lemma \ref{numrangel} and Theorems \ref{T12} and \ref{T5}.
\medskip

\emph{Proof of Theorem \ref{diagonal_intro}} \, \,
Consider $A \in C([0,1], L(H))$ and $d_n\in C([0,1]), n\in\mathbb N,$ satisfying $d_n(t)\in\Int W_e(A(t))$ for all $n\in \mathbb N$ and $t\in [0,1]$
and
\[
\theta:=\inf\{\dist(d_n(t),\partial W_e(A(t))): n \in \mathbb N, t \in [0,1]\}>0.
\]
We will construct
 $v_n\in C([0,1], H), n\in\mathbb N,$ such that $(v_n(t))_{n=1}^\infty$ is an orthonormal basis in $H$ for every $t\in[0,1],$
and
$$
\langle A(t)v_n(t),v_n(t)\rangle= d_n(t), \qquad n \in \mathbb N, \quad t \in [0,1].
$$
Moreover, our proof will reveal that the equicontinuity of $\{d_n: n \ge 1\}$
implies
the equicontinuity of $\{ v_n: n \ge 1\}.$

Without loss of generality we may assume that $\|A(t)\|\le 1$ for all $t\in [0,1]$.
Fix a sequence $(y_m)_{m=1}^\infty$ of unit vectors in $H$ such that $\bigvee_{m=1}^\infty y_m=H$.
Note that each $n\in\mathbb N$ can be written uniquely as
\[n=2^{r(n)-1}(2s(n)-1)\]
 with integers $r(n), s(n)\in\mathbb N.$
Fix $\eta \in (0,\sqrt \theta/2\sqrt 2),$ and
let
\[
\eta_k=\frac{\eta}{2^{r(k)/2}{s(k)^{1/2}}}, \qquad k \in \mathbb N.
\]

The functions  $v_k:[0,1]\to S_H, k \ge 1,$ will be constructed inductively.

Using Lemma \ref{numrangel}, find mutually orthogonal subspaces  $M_k, k\in\mathbb N,$ of $H$ such that $W_e(P_{M_k}A(t){\upharpoonright}_{M_k})=W_e(A(t))$ for all $k\in\mathbb N$ and $t\in[0,1]$.

Fix $k\in\mathbb N,$ and suppose that we have already constructed continuous functions $v_1,
\dots,v_{k-1}:[0,1]\to S_H$ such that
\begin{itemize}
\item [(1)] the vectors $v_1(t),\dots,v_{k-1}(t)$ are mutually orthogonal for each $t \in [0,1]$;

\item[(2)] $\langle A(t)v_j(t),v_j(t)\rangle= d_j(t)$ for all $j\le k-1, t\in[0,1]$;

\item [(3)] if $j\le k-1, t \in [0,1],$ then $v_j(t)\in\bigvee_{i=1}^j \left(M_{i}\vee y_{r(i)}\right)$;

\item [(4)]  if $H_j(t)=\bigvee_{i=1}^j v_i(t)$, then  for all $j \le k-1$ and $t \in [0,1],$
\[
\dist^2(y_{r(j)},H_j(t))\le \prod_{i=1}^{s(j)}\left(1-\eta^2_{2^{ r(j)-1 }(2i-1)}\right).
\]
\end{itemize}
Moreover,
if $\{ d_n: n=1,2,\dots\}$ are equicontinuous with a modulus of continuity $\delta$ and $\delta'$ is the modulus of continuity constructed in Theorem \ref{T5}, then we also assume that

(5) there exist functions $u_j:[0,1]\to S_H,$ $1 \le j \le k-1,$ admitting the modulus of continuity $\delta'$ such that
$$
\|u_j(t)-v_j(t)\|\le 27\sqrt{2}\eta_k{\theta}^{-1/2}.
$$
for all $t \in [0,1]$ and $1 \le j \le k-1.$

Observing that if $k=1$, then (1)--(5) are formally satisfied,
we construct a function $v_k$ in the following way.

For each $t\in[0,1]$ write
\[
y_k(t)=(I-P_{H_{k-1}(t)})y_{r(k)}.
\]
Let
\[
b_k=\prod_{i=1}^{s(k)}(1-\eta_{2^{(r(k)-1)}(2i-1)})>0 \qquad \text{and} \qquad b_k(t)=\max\{b_k,\|y_k(t)\|\}
\]
for all $t \in [0,1].$
Finally, choose $\delta_k>0$ such that
\[
12\sqrt{\frac{2\delta_k}{\theta}}\le\eta_k/2.
\]

Note that the set
\[
\left\{\frac{A(t)y_k(t)}{b_k(t)}, \, \frac{A^*(t)y_k(t)}{b_k(t)}: \, t\in [0,1]\right\}
\]
is compact. Therefore by Lemma \ref{comp} there exists a subspace
$\widetilde M_k\subset M_k$ such that
\[
\dim M_k/\widetilde M_k<\infty, \qquad
\widetilde M_k\perp\{y_{r(j)}:1\le j\le k\},
\]
and, moreover,
\[
\left|\left\langle \frac{A(t)y_k(t)}{b_k(t)},m\right\rangle\right|\le\delta_k, \qquad
\left|\left\langle \frac{A^*(t)y_k(t)}{b_k(t)},m\right\rangle\right|\le\delta_k,
\]
for all $m\in \widetilde M_k$ with $\|m\|=1$ and $t\in[0,1]$.

By Theorem \ref{T5}, there exists a continuous function $u_k:[0,1]\to S_{\widetilde M_k}$ such that
$\langle A(t)u_k(t),u_k(t)\rangle=d_k(t)$ for all $t\in [0,1]$. Observe that if
$\{d_n: n\ge 1\}$ are equicontinuous and admit the same modulus of continuity $\delta$,
then we may assume that $u_k$ admit $\delta'$, where $\delta'$ is the modulus of continuity constructed in Theorem \ref{T5}.

Let
$$
\widetilde d_k(t)=\frac{b_k^2(t)}{b^2_k(t)-\eta_k^2\|y_k(t)\|^2}d_k(t)-\frac{\eta_k^2 \langle A(t)y_k(t),y_k(t)\rangle}{b_k^2(t)-\eta_k^2\|y_k(t)\|^2}.
$$
We have
\begin{align*}
\bigl|\widetilde d_k(t)-d_k(t)\bigr|\le&
\left|1-\frac{b_k^2(t)}{b^2_k(t)-\eta_k^2\|y_k(t)\|^2}\right|+\left|\frac{\eta_k^2 \langle A(t)y_k(t),y_k(t)\rangle}{b_k^2(t)-\eta_k^2\|y_k(t)\|^2}\right|\\
\le&\frac{2\eta_k^2\|y_k(t)\|^2}{b_k^2(t)-\eta_k^2\|y_k(t)\|^2}\le\frac{2\eta_k^2}{1-\eta_k^2}\\
\le&
4\eta_k^2\le 4\eta^2
\le \theta/2.
\end{align*}
So
\[
\left|\langle A(t)u_k(t),u_k(t)\rangle-\widetilde d_k(t)\right|\le 4\eta_k^2\le\theta/2\]
and
$$
\dist(\widetilde d_k(t), \partial W_e(A(t)))\ge\theta/2
$$
for all $t\in [0,1]$.

By Theorem \ref{T12}, there exists a continuous function $u_k^{(1)}:[0,1]\to S_{\widetilde M_k}$ such that
$$
\bigl\langle A(t) u_k^{(1)}(t), u_k^{(1)}(t)\bigr\rangle=\widetilde d_k(t)
$$
for all $t\in [0,1]$ and
$$
\|u_k^{(1)}(t)-u_k(t)\|\le \frac{24\sqrt{2}\eta_k}{\theta^{1/2}}.
$$

Define
$$
u_k^{(2)}(t)=\sqrt{1-\frac{\eta_k^2\|y_k(t)\|^2}{b_k^2(t)}} u_k^{(1)}(t)+\frac{\eta_k y_k(t)}{b_k(t)}, \qquad t \in [0,1].
$$
Since $u_k^{(1)}(t)\in\widetilde M_k$ and
\[
y_k(t)\in\left(\bigvee_{j=1}^{k-1}u_j(t)\vee y_{r(k)}\right)\subset \bigvee_{j=1}^{k-1}M_j\vee\bigvee_{j=1}^{k}y_{r(j)},
\]
 we have $y_k(t)\perp u_k^{(1)}(t)$ and
$\|u_k^{(2)}(t)\|=1$ for all $t \in [0,1]$.
In addition, in view of
\[
u_k^{(2)}(t)\in \left(\widetilde M_k \vee y_{k}(t)\right),
\]
we also have $u_k^{(2)}(t)\perp H_{k-1}(t)$ for all $t\in[0,1]$.

Thus,
\begin{align*}
&\left|\Bigl\langle A(t) u_k^{(2)}(t),u_k^{(2)}(t)\Bigr\rangle-d_k(t)\right|\\
\le&
\Bigl|\Bigl(1-\frac{\eta_k^2\|y_k(t)\|^2}{b^2_k(t)}\Bigr)\langle A(t) u_k^{(1)}(t), u_k^{(1)}(t)\rangle +\frac{\eta_k^2}{b^2_k(t)}\langle A(t)y_k(t),y_k(t)\rangle-d_k(t)\Bigr|\\
+&\frac{\eta_k}{b_k(t)} \cdot
\sqrt{\frac{b_k^2(t)-\eta_k^2\|y_k(t)\|^2}{b_k^2(t)}}\Bigl|\langle A(t)y_k(t),u_k^{(1)}(t)\rangle+\langle A(t) u_k^{(1)}(t),y_k(t)\rangle\Bigr|\\
\le&
\Bigl|\frac{b_k^2(t)-\eta_k^2\|y_k(t)\|^2}{b_k^2(t)} \widetilde d_k(t) +\frac{\eta_k^2}{b_k^2(t)}\langle A(t)y_k(t),y_k(t)\rangle-d_k(t)\Bigr|+2\eta_k\delta_k\\
\le& 2\delta_k.
\end{align*}
Moreover,
$$
\|u_k^{(2)}(t)-u_k^{(1)}(t)\|\le
1-\sqrt{1-\frac{\eta_k^2\|y_k(t)\|^2}{b_k^2(t)}}+\frac{\eta_k\|y_k(t)\|}{b_k(t)}\le
\Bigl(1-\sqrt{1-\eta_k^2}\Bigr)+\eta_k\le 2\eta_k.
$$
By Theorem \ref{T12}, there exists a continuous function $v_k:[0,1]\to S_H$ such that
for all $t \in [0,1],$
$$
v_k(t)\in u_k^{(2)}(t)\vee\widetilde M_k\subset \bigvee_{j=1}^{k} \left(M_j\vee y_{r(j)}\right),
$$
$$
\|v_k(t)- u_k^{(2)}(t)\|\le 12\sqrt{\frac{2\delta_k}{\theta}}\le\eta_k/2,
$$
and
$$
\langle A(t)v_k(t),v_k(t)\rangle=d_k(t).
$$
In particular,
\[
v_k(t)\perp v_1(t),\dots,v_{k-1}(t), \quad t \in [0,1].
\]
We conclude that (1)--(3) hold, and it remains to establish (4).

To this aim, observe that if $\|y_k(t)\|< b_k,$ then (4) is satisfied trivially.
If otherwise $\|y_{k}(t)\|\ge b_k,$ then $b_k(t)=\|y_k(t)\|$ and
$$
\dist^2(y_k(t),H_k(t))=
\|y_k(t)\|^2-|\langle y_k(t),v_k(t)\rangle|^2,
$$
where
\begin{align*}
|\langle y_k(t),v_k(t)\rangle|\ge&
|\langle y_k(t),u_k^{(2)}(t)\rangle|-\|y_k(t)\|\cdot\|v_k(t)-u_k^{(2)}(t)\|\\
\ge&
\frac{\eta_k\|y_k(t)\|^2}{b_k(t)}-\|y_k(t)\|\cdot \eta_k/2\ge \frac{\eta_k\|y_k(t)\|}{2}.
\end{align*}
So
$$
\dist^2(y_k(t),H_k(t))\le
\|y_k(t)\|^2\Bigl(1-\frac{\eta_k^2}{4}\Bigr)\le
\prod_{i=1}^{s(k)}\left(1-\frac{\eta^2_{2^{r(k)}(2i-1)}}{4}\right)
$$
by the induction assumption.

As a result of this inductive construction, we obtain the sequence $(v_k)_{k=1}^\infty \subset C([0,1], H)$
 such that $(v_k(t))_{k=1}^\infty$ is an orthonormal system for each $t\in [0,1]$
and \[
\langle A(t)v_k(t),v_k(t)\rangle=d_k(t)
\]
 for all $k\in\mathbb N$ and $t\in [0,1]$.

Moreover, by construction, if the functions
$\{d_k: k \ge 1\}$ are equicontinuous with modulus of continuity $\delta,$ then the functions
 $\{u_k: k \ge 1\}$ are equicontinuous with modulus of continuity $\delta'$ constructed in Theorem \ref{T5}. In addition, for all $t\in[0,1],$ we have
\begin{align*}
\|v_k(t)-u_k(t)\|&\le
\|v_k(t)-u_k^{(2)}(t)\|+\|u_k^{(2)}(t)- u_k^{(1)}(t)\|+\|u_k^{(1)}(t)-u_k(t)\|\cr
&\le\frac{\eta_k}{2}+2\eta_k+\frac{24\sqrt{2}\eta_k}{\theta^{1/2}}\le 27\sqrt{2}\eta_k\theta^{-1/2}.
\end{align*}
So $\sup\{\|v_k(t)-u_k(t)\|: t\in[0,1]\}\to 0$ as $k\to\infty,$ hence $\{ v_k: k \ge 1\}$ are equicontinuous with modulus of continuity $\delta'.$

It remains to prove that the sequence $(v_k(t))_{k=1}^\infty$ is an orthonormal basis in $H$ for every $t \in [0,1]$. To show this property, let $m\in\mathbb N$ and $t\in[0,1]$ be fixed. Then
$$
\dist^2(y_m,\bigvee_{j=1}^\infty v_j(t))=
\lim_{s\to\infty}\dist^2(y_m,H_{2^{m-1}(2s-1)}(t))
\le
\prod_{i=1}^\infty\left(1-\frac{\eta^2_{2^{m-1}(2i-1)}}{4}\right).
$$
Hence
\begin{align*}
\ln\dist^2(y_m,\bigvee_{j=1}^\infty v_j(t))&\le
\sum_{i=1}^\infty\ln\left(1-\frac{\eta^2_{2^{m-1}(2i-1)}}{4}\right)\\
&\le
-\sum_{i=1}^\infty\frac{\eta^2_{2^{m-1}(2i-1)}}{4}=
-\sum_{i=1}^\infty\frac{\eta^2}{i2^{m+2}}=-\infty,
\end{align*}
so that $y_m\in\bigvee_{k=1}^\infty v_k(t)$ for each $t \in [0,1]$.
Since $\bigvee_{m\in\mathbb N} y_m=H$, the property follows,
and thus the proof is complete.

$\hfill \qed$

\section{Compressions and diagonals of operator-valued functions with selfadjoint values}\label{self}

In this section, we show how to obtain  results analogous to those in Sections \ref{compr}
and \ref{diag_fun} for  selfadjoint $A.$
Here we formulate the results using more explicit spectral terms, which make them particularly
attractive.
Recall that for a selfadjoint operator $A\in L(H)$ we have $\overline{W(A)}=\conv \sigma(A)$ and $W_e(A)={\rm conv}\,\sigma_e(A),$ where $\sigma_e(A)$ denotes the essential spectrum of $A$
and all of the sets are subsets of $\mathbb R$. Thus,  $\Int_{\mathbb R} W_e(A)=\Int_{\mathbb R}\conv \sigma_e(A),$ where $\Int_{\mathbb R}$ stands for the interior relative to $\mathbb R.$
Lemma \ref{codim} remains valid after replacing $\Int W_e$ by  $\Int_{\mathbb R} W_e.$
We start with obtaining a counterpart of Theorem \ref{T9_intro}
for selfadjoint $A$ and $D.$
In the selfadjoint case, the factorization approach from Section \ref{compr}
simplifies and requires just two factors to separate the two time-dependent sides of
\eqref{compress}.

The next results are analogues of Lemma \ref{L2}, Proposition \ref{P4} and Proposition \ref{P6},
respectively, and have similar, and in fact more direct, proofs
(using relative interiors instead of interiors).
The statements are formulated for selfadjoint $A(t)$, $t \in [0,1]$, with $\Int_{\mathbb R}\sigma_e(A(t))\supset [a,b]$ for some $a,b \in \mathbb R$, $a<b$, and all $t \in [0,1]$.
To deduce them from the corresponding statements of the preceding sections, it suffices to pass to $A(t)-(a+b)/2$ and $D(t)-(a+b)/2$ in place of $A(t)$ and $D(t)$ for $t \in [0,1]$.
For similar situations with general $A$ and selfadjoint $A$ considered
in a time-independent setting, see e.g.\ \cite{MT19}
and \cite{LW20}.

\begin{lemma}
Let $A \in C([0,1], L(H))$ satisfy $A^*(t)=A(t), t \in [0,1],$ and assume that there exist $a, b \in \mathbb R, a<b,$ such that $[a,b]\subset
\Int_{\mathbb R} {\rm conv}\,\sigma_e(A(t))$
for all $t \in [0,1]$. For a Hilbert space $X$ with $\dim X<\infty$, let $D \in C([0,1], L(X))$
be such that
$D^*(t)=D(t)$ and $\sigma(D(t))\subset [a,b]$ for all $t \in [0,1]$. Then for every subspace $L\subset H$,
$\codim L<\infty,$ and every $\e>0,$
there exists $V \in C([0,1], L(X,H))$ such that for all $t\in [0,1],$
\begin{itemize}
\item[(i)] $V(t)$ is an isometry;
\item[(ii)] $\dim Z(V)<\infty$ and $Z(V)\subset L$;
\item[(iii)] $\|V^*(t)A(t)V(t)-D(t)\|\le\e$;
\item[(iv)] $\Lip (V)<\infty,$ and $\Lip(V)$ depends only on $\e$ and the moduli of continuity $\delta_A$ and $\delta_D$.
\end{itemize}
\end{lemma}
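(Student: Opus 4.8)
The plan is to reprove the statement by following the proof of Lemma~\ref{L2} almost verbatim, the only structural change being that Lemma~\ref{L1} is replaced by a selfadjoint \emph{exact}-compression statement that, in the present situation, is even easier: since $X$ is finite-dimensional, it follows directly from Lemma~\ref{codim},(b), without invoking Theorem~\ref{bourin}. As a harmless normalisation (and in line with the reduction indicated before the statement) I would first pass to $A(t)-\tfrac{a+b}{2}I$ and $D(t)-\tfrac{a+b}{2}I$: this alters neither the isometry property, nor $Z(V)$, nor $\Lip(V)$, nor the operator $V^*(t)A(t)V(t)-D(t)$, so we may assume $a=-\rho$, $b=\rho$ with $\rho=(b-a)/2>0$; in particular $[-\rho,\rho]\subset\Int_{\mathbb R}\conv\sigma_e(A(t))$ and $\sigma(D(t))\subset[-\rho,\rho]$ for all $t$.

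The selfadjoint substitute for Lemma~\ref{L1} reads as follows: if $B=B^*\in L(H)$ with $[a,b]\subset\Int_{\mathbb R}\conv\sigma_e(B)$, if $E=E^*$ acts on a finite-dimensional $X$ with $\sigma(E)\subset[a,b]$, and if $L'\subset H$ has finite codimension, then there exists an isometry $V_0:X\to H$ with $\operatorname{Im}V_0\subset L'$ and $V_0^*BV_0=E$. To see this, fix an orthonormal eigenbasis $f_1,\dots,f_m$ of $X$ with $Ef_i=\lambda_i f_i$, so each $\lambda_i\in[a,b]\subset\Int_{\mathbb R}W_e(B)$; using Lemma~\ref{codim},(b) choose inductively unit vectors
\[
g_i\in L'\cap\bigl(g_1,\dots,g_{i-1},Bg_1,\dots,Bg_{i-1}\bigr)^\perp
\]
(a subspace of finite codimension) with $\langle Bg_i,g_i\rangle=\lambda_i$, and set $V_0f_i=g_i$. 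Then $V_0$ is an isometry with $\operatorname{Im}V_0\subset L'$, and since $B=B^*$ the orthogonality relations force $\langle Bg_i,g_j\rangle=0$ for $i\neq j$, whence $V_0^*BV_0=E$.

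With this in hand I would run the proof of Lemma~\ref{L2} unchanged: choose the smallest $k$ such that $\|A(t')-A(t)\|\le\e/4$ and $\|D(t')-D(t)\|\le\e/2$ whenever $|t-t'|\le k^{-1}$, put $t_j=j/k$, and construct isometries $V(t_j):X\to H$ with $V^*(t_j)A(t_j)V(t_j)=D(t_j)$ by induction on $j$, applying the statement above with $B=A(t_j)$, $E=D(t_j)$ and
\[
L'=L\cap\bigl(V(t_{j-1})X\cup A(t_{j-1})V(t_{j-1})X\cup A(t_j)V(t_{j-1})X\bigr)^\perp
\]
(this is the only place the selfadjointness of $A$ enters: the term $A^*(t_j)V(t_{j-1})X$ of the original argument coincides with $A(t_j)V(t_{j-1})X$). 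Interpolating by $V(t)=V(t_j)\cos f_j(t)+V(t_{j+1})\sin f_j(t)$ on $[t_j,t_{j+1}]$ with $f_j(t)=\tfrac{k\pi}{2}(t-t_j)$, the verifications of (i)--(iv) are literally those of Lemma~\ref{L2}: the relation $V(t_j)x\perp V(t_{j+1})x$ gives the isometry property, hence (i); $Z(V)=\bigvee_j V(t_j)X$ is finite-dimensional and contained in $L$, giving (ii); $\|\mathrm dV/\mathrm dt\|\le k\pi$ gives $\Lip(V)\le k\pi$ with $k$ depending only on $\e,\delta_A,\delta_D$, i.e.\ (iv); and the same telescoping estimate as in Lemma~\ref{L2}, using $\|A(s)-A(t_j)\|,\|A(s)-A(t_{j+1})\|\le\e/4$ and $\|D(s)-D(t_j)\|,\|D(s)-D(t_{j+1})\|\le\e/2$, yields $\|V^*(s)A(s)V(s)-D(s)\|\le\e$, i.e.\ (iii). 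There is essentially no serious obstacle here; the only point that deserves a moment's attention is the selfadjoint compression claim above, specifically the observation that $\sigma(E)\subset[a,b]$ places every eigenvalue of $E$ inside $\Int_{\mathbb R}W_e(B)$ so that Lemma~\ref{codim},(b) is applicable. Everything else parallels Section~\ref{compr} and is routine.
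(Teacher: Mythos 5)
Your proposal is correct and follows essentially the route the paper intends: it reruns the proof of Lemma~\ref{L2} verbatim, with the only necessary change being the replacement of Lemma~\ref{L1} (which is unavailable here since $\Int W_e(A(t))=\emptyset$ for selfadjoint values) by an exact selfadjoint compression obtained from the relative-interior version of Lemma~\ref{codim},(b) applied to an eigenbasis of $D(t_j)$, exactly the ``relative interiors instead of interiors'' modification the paper alludes to. The verification that selfadjointness of $B$ turns the orthogonality conditions $g_i\perp Bg_j$ into $\langle Bg_i,g_j\rangle=0$, so that $V_0^*BV_0$ is diagonal and equals $E$, is the one point that needed care, and you handle it correctly.
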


\begin{proposition}
Let $A \in C([0,1], L(H))$ satisfy $A^*(t)=A(t), t \in [0,1],$ and $[a,b] \subset \Int_{\mathbb R} {\rm conv}\,\sigma_e(A(t))$ for some $a, b \in \mathbb R, a<b,$ and all $t \in [0,1].$
Let $X$ be a finite-dimensional Hilbert space and let $D \in C([0,1], L(X))$ be such that
$D^*(t)=D(t)$
 and $\sigma(D(t))\subset [a,b]$ for all $t\in [0,1]$. Set
$$
\theta=\min_{t\in [0,1]}\min\{\|b-D(t)\|,\|D(t)-a\|\}.
$$
Given $\e, l >0,$ and a subspace $L \subset H, {\rm codim}\, L<\infty,$ suppose that
$V \in C([0,1], L(X,H))$ is isometry-valued,
$\dim Z(V)<\infty$, $Z(V) \subset L,$ $\Lip(V)\le l,$ and
$$
\|V^*(t)A(t)V(t)-D(t)\|\le\e, \qquad t \in [0,1].
$$
Then for every $\e'>0$ there exist $\tilde l$ depending only on $\e', l$ and
the moduli of continuity $\delta_A$ and $\delta_D$ and
$\tilde V \in C([0,1], L(X,H))$ such that
\begin{itemize}
\item [(i)] $\tilde V$ is isometry-valued,
\item [(ii)] $Z(\tilde V)\subset L$, $\dim Z(\tilde V)<\infty$ and $\Lip(\tilde V)\le\tilde l,$
\item [(iii)]$
\|\tilde V^*(t)A(t)\tilde V(t)-D(t)\|\le\e', \, t \in [0,1],
$
\item [(iv)]
$
\|\tilde V(t)-V(t)\|\le \sqrt{\frac{2\e}{\theta}}, \, t \in [0,1].
$
\end{itemize}
\end{proposition}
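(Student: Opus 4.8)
The plan is to transplant the proof of Proposition~\ref{P4} to the selfadjoint setting, using the selfadjoint analogue of Lemma~\ref{L2} stated above in place of Lemma~\ref{L2} itself, and replacing the complex discs by real intervals throughout. As explained just before the statement, subtracting the scalar $(a+b)/2$ from both $A(t)$ and $D(t)$ reduces matters to the symmetric interval $[a,b]=[-c,c]$, $c=(b-a)/2>0$; the hypotheses then read $\sigma(D(t))\subset[-c,c]$, $[-c,c]\subset\Int_{\mathbb R}\conv\sigma_e(A(t))$, $\|V^*(t)A(t)V(t)-D(t)\|\le\e$ and $\theta=\min_t\min\{\|cI-D(t)\|,\|cI+D(t)\|\}$, every operator being selfadjoint. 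First I would reproduce the opening of the proof of Proposition~\ref{P4}: writing $B_X$ for the closed unit ball of $X$, I would pick $x_1,\dots,x_m\in H$ forming an $\e'/2$-net for the compact set $\{A(t)V(t)x:t\in[0,1],\,x\in B_X\}$ (since $A(t)=A^*(t)$, the two compact sets appearing there coincide, which streamlines this step), set $\mu=\e/\theta$ — playing the role of the parameter called $a$ in the proof of Proposition~\ref{P4} — and put
\[
\widetilde D(t)=\frac{D(t)+(\mu-1)V^*(t)A(t)V(t)}{\mu},\qquad t\in[0,1].
\]

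The point I would be most careful about is that $\widetilde D(t)$ is an admissible target for the selfadjoint analogue of Lemma~\ref{L2}. It is selfadjoint, being an $\mathbb R$-affine combination of the selfadjoint operators $D(t)$ and $V^*(t)A(t)V(t)$; moreover
\[
\bigl\|\widetilde D(t)-D(t)\bigr\|=\frac{1-\mu}{\mu}\,\bigl\|V^*(t)A(t)V(t)-D(t)\bigr\|\le\frac{1-\mu}{\mu}\,\e=\theta-\e,
\]
so that $\sigma(\widetilde D(t))\subset[-c-(\theta-\e),\,c+(\theta-\e)]$. Here I would invoke the strictness of the inclusion $[-c,c]\subset\Int_{\mathbb R}\conv\sigma_e(A(t))$: since $t\mapsto\conv\sigma_e(A(t))$ is continuous and $[0,1]$ is compact, this slightly enlarged interval still lies in $\Int_{\mathbb R}\conv\sigma_e(A(t))$ uniformly in $t$, which is exactly what the selfadjoint analogue of Lemma~\ref{L2} requires. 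Applying that lemma with $L$ replaced by $M:=Z(V)^{\perp}\cap\{x_1,\dots,x_m\}^{\perp}\cap L$, I obtain an isometry-valued $R\in C([0,1],L(X,H))$ with $Z(R)\subset M$, $\dim Z(R)<\infty$, $\Lip(R)$ depending only on $\e'$, $\delta_A$ and $\delta_{\widetilde D}$ — hence only on $\e'$, $l$, $\delta_A$, $\delta_D$ — and $\|R^*(t)A(t)R(t)-\widetilde D(t)\|\le\e'/2$ for all $t\in[0,1]$.

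With $R$ at hand I would set $\widetilde V(t)=\sqrt{1-\mu}\,V(t)+\sqrt{\mu}\,R(t)$ and run the computations of Proposition~\ref{P4} essentially unchanged: the orthogonality $R(t)X\perp V(t)X$ makes $\widetilde V(t)$ an isometry, $Z(\widetilde V)\subset Z(V)+Z(R)\subset L$ is finite-dimensional, $\Lip(\widetilde V)\le\sqrt{1-\mu}\,l+\sqrt{\mu}\,\Lip(R)\le\widetilde l$ with $\widetilde l$ depending only on $\e',l,\delta_A,\delta_D$, the two cross terms $\langle A(t)V(t)x,R(t)x'\rangle$ and $\langle A(t)R(t)x,V(t)x'\rangle$ are $\le\e'/2$ by the net property combined with $Z(R)\perp\{x_1,\dots,x_m\}$, the remaining term equals $\e'/2$ by the definition of $\widetilde D$ — yielding (iii) — and $\|\widetilde V(t)x-V(t)x\|^2=2-2\sqrt{1-\mu}\le2\mu=2\e/\theta$ for unit $x$ — yielding (iv). The hard part, such as it is, is the bookkeeping of the middle paragraph, namely certifying that $\widetilde D$ lands in $\conv\sigma_e(A(t))$ with room to spare uniformly in $t$; once that is done the argument is the line-by-line selfadjoint mirror of Proposition~\ref{P4}, the passage from discs to intervals only simplifying matters.
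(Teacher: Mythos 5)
Your overall strategy --- translate by $(a+b)/2$ and mirror the proof of Proposition~\ref{P4}, with the selfadjoint analogue of Lemma~\ref{L2} in place of Lemma~\ref{L2} --- is exactly what the paper intends, and most steps transplant correctly. The gap is in your middle paragraph, at the very point you flag as the one needing care. From $\|\widetilde D(t)-D(t)\|\le\theta-\e$ you get $\sigma(\widetilde D(t))\subset[a-(\theta-\e),\,b+(\theta-\e)]$ and then assert that this ``slightly enlarged interval'' still lies in $\Int_{\mathbb R}\conv\sigma_e(A(t))$ by continuity and compactness. This does not follow: $\theta-\e$ is a fixed quantity determined by $D$, not a parameter you may shrink, while compactness only guarantees that some (possibly much smaller) $\delta$-neighbourhood of $[a,b]$ stays inside $\Int_{\mathbb R}\conv\sigma_e(A(t))$. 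Worse, with $\theta$ as literally defined one has $\|b-D(t)\|=b-\lambda_{\min}(D(t))$ and $\|D(t)-a\|=\lambda_{\max}(D(t))-a$, so $\theta$ can be as large as $b-a$; then the enlarged interval protrudes by almost $b-a$ beyond $[a,b]$, and if, say, $\conv\sigma_e(A(t))=[a-\delta,b+\delta]$ with $\delta$ tiny, the selfadjoint Lemma~\ref{L2} analogue is simply not applicable to $\widetilde D$ --- indeed no compression $R^*(t)A(t)R(t)$ can approximate it, since its numerical range lies in $\conv\sigma(A(t))$.

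The quantity that actually makes the Proposition~\ref{P4} argument run is the gap between $\sigma(D(t))$ and the endpoints, i.e.\ $\min\{b-\lambda_{\max}(D(t)),\,\lambda_{\min}(D(t))-a\}$, not $\min\{b-\lambda_{\min},\lambda_{\max}-a\}$ as the displayed formula for $\theta$ gives. With that (smaller) $\theta$ one obtains $\sigma(\widetilde D(t))\subset[\lambda_{\min}(D(t))-\theta+\e,\,\lambda_{\max}(D(t))+\theta-\e]\subset[a,b]$, no enlargement of the interval is needed, and the rest of your argument goes through verbatim. With the larger $\theta$ the conclusion (iv) can even fail: take constant $A=A^*$ with $\conv\sigma_e(A)=[0,1]$, $[a,b]=[\delta,1-\delta]$, $D=\mathrm{diag}(\delta,1-\delta)$ on $X=\mathbb C^2$ (so $\theta=1-2\delta$), and $Ve_1$ a unit vector that is almost an eigenvector of $A$ for the value $\delta+\e$; then every unit vector $u$ with $\langle Au,u\rangle\le\delta+\e'$ satisfies $\|u-Ve_1\|\ge\sqrt{(\e-\e')/(\delta+\e)}$ up to a negligible error, which for $\delta\ll\e$ is of order $1$, whereas (iv) demands $\sqrt{2\e/(1-2\delta)}\approx\sqrt{2\e}$. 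So you should either work with the corrected $\theta$ (noting the discrepancy with the statement) or supply the missing containment some other way; the compactness argument as written cannot be repaired.
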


\begin{lemma}\label{selfd}
Let $A \in C([0,1], L(H))$ be such that $A^*(t)=A(t), t \in [0,1],$ and there are $a, b \in \mathbb R, a<b,$ satisfying $[a,b]\subset \Int_{\mathbb R} \conv \sigma_e(A(t))$
for all $t\in [0,1].$ Assume that $X$ is a separable Hilbert space, $X=X_1\oplus X_2$ with $\dim X_1=\dim X_2=\infty$, and $D\in L(X)$ is given by
$D=a'I\oplus b'I,$ $a<a'<b'<b$. Then there exists $V \in C([0,1], L(X,H))$ such that $V$ is isometry-valued,
and
$$
V^*(t)A(t)V(t)=D, \qquad t \in [0,1].
$$
\end{lemma}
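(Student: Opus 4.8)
The plan is to repeat, essentially verbatim, the proof of Proposition~\ref{P6}, with the complex interior $\Int W_e$ replaced throughout by the relative interior $\Int_{\mathbb R}\conv\sigma_e$, and with the selfadjoint clauses of Lemma~\ref{codim} together with the selfadjoint analogues of Lemma~\ref{L2} and Proposition~\ref{P4} stated just above used in place of their general counterparts. After subtracting $(a+b)/2$ from $A(t)$ and from $D$ one may assume $a=-c$ and $b=c$ for some $c>0$, so that $0\in\Int_{\mathbb R}\conv\sigma_e(A(t))$ for every $t\in[0,1]$ and $-c<a'<b'<c$; by compactness of $[0,1]$ and continuity of $t\mapsto W_e(A(t))$ one may then fix $\theta>0$ with $\{r\in\mathbb R:|r|\le\max\{|a'|,|b'|\}+\theta\}\subset\Int_{\mathbb R}\conv\sigma_e(A(t))$ for all $t$.

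First I would choose orthonormal bases $(x^1_j)_{j\ge1}$ of $X_1$ and $(x^2_j)_{j\ge1}$ of $X_2$, set $X_k=\bigvee\{x^1_j,x^2_j:j\le 2^{k-1}\}$, $Y_k=X_{k+1}\ominus X_k$, and $D_k=P_{X_k}D{\upharpoonright}_{X_k}$, $D'_k=P_{Y_k}D{\upharpoonright}_{Y_k}$, noting that $D_k$ and $D'_k$ are again of the form $a'I\oplus b'I$ and hence selfadjoint with spectrum $\{a',b'\}\subset[a,b]$. With these truncations the double induction of Proposition~\ref{P6} carries over with no change: starting from the selfadjoint Lemma~\ref{L2} and refining with the selfadjoint Proposition~\ref{P4} (taking $\e=2^{-(n-1)}\theta$ and $\e'=2^{-n}\theta$), one constructs isometry-valued $V_{k,n}\in C([0,1],L(X_k,H))$ with $\dim Z(V_{k,n})<\infty$, $\|V_{k,n}^*(t)A(t)V_{k,n}(t)-D_k\|\le 2^{-n}\theta\,(2-2^{-k})$, $\|V_{k,n}(t)-V_{k,n-1}(t)\|\le 2^{(2-n)/2}$, $\Lip(V_{k,n})\le l_n$, and $V_{k+1,n}$ extending $V_{k,n}$; the finiteness of $\dim Z(V_{k,n})$ keeps $Z(V_{k,n})^\perp$ of finite codimension, which is what makes the selfadjoint form of Lemma~\ref{codim},(a) applicable at each passage from $X_k$ to $X_{k+1}$. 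For fixed $n$ the $V_{k,n}$ are consistent in $k$ and assemble into an isometry-valued $V_n\in C([0,1],L(X,H))$ with $\Lip(V_n)\le l_n$, $\|V_n^*(t)A(t)V_n(t)-D\|\le 2\theta/2^{n}$ and $\|V_{n+1}(t)-V_n(t)\|\le 2^{(1-n)/2}$; the last series converges uniformly, so $V(t):=\lim_{n\to\infty}V_n(t)$ defines the required $V\in C([0,1],L(X,H))$, which is isometry-valued and satisfies $V^*(t)A(t)V(t)=D$ for every $t\in[0,1]$.

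The one point that I expect to require genuine care is the spectral bookkeeping: one has to check that the truncations $D_k$, the increments $D'_k$, and the auxiliary selfadjoint operators $\widetilde D(t)$ arising inside the selfadjoint Proposition~\ref{P4} all keep their spectrum inside $[a,b]$ at every stage of the induction, which is exactly where the strict inequalities $a<a'<b'<b$ enter, through the choice of $\theta$, so that the selfadjoint Lemma~\ref{L2} and Proposition~\ref{P4} genuinely apply at each step. Apart from this, no new idea is needed: the construction of the constants $l_n$, all the norm estimates, and the limiting arguments are word for word those in the proof of Proposition~\ref{P6}.
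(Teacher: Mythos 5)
Your proposal is correct and follows exactly the route the paper intends: the paper itself only remarks that Lemma~\ref{selfd} is the analogue of Proposition~\ref{P6}, proved by centering via the shift $A(t)\mapsto A(t)-(a+b)/2$ and rerunning the double induction with $\Int_{\mathbb R}\conv\sigma_e$ in place of $\Int W_e$ and with the selfadjoint versions of Lemma~\ref{L2} and Proposition~\ref{P4}. Your additional details (interleaving bases of $X_1$ and $X_2$ so that each truncation $D_k$, $D_k'$ remains of the form $a'I\oplus b'I$, and tracking that the auxiliary operators keep spectrum in $[a,b]$) are exactly the points one must check, and they go through as you describe.
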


We will need the next auxiliary statement serving a role similar to Proposition \ref{P8}
in the particular setting of selfadjoint operators.
It is proved in a generality greater than needed for our reasoning,
but its proof does not deviate much from the proof of a simpler version.

\begin{lemma}\label{selfdd}
Let $A \in L(H)$ be such that $A=A^*$ and  $[a,b] \subset \Int_\mathbb R \conv \sigma_e (A)$
for some $a,b \in \mathbb R, a<b.$
 If $Z$ is a separable Hilbert space and  $D \in C([0,1], L(Z))$
satisfies $D^*(t)=D(t)$ and $\sigma (D(t))\subset [a, b]$
for all
$t \in [0,1]$,
then there exists an isometry-valued $W \in C([0,1], L(Z,H))$ such that
\begin{equation}
W^*(t)A W(t)=D(t), \qquad t \in [0,1].
\end{equation}
\end{lemma}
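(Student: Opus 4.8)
The plan is to follow the strategy of Theorem~\ref{T9_intro} in its simplest, two-step form. Since the operator $A$ is \emph{fixed} (only $D$ depends on $t$), the first, stationary step is to realize a simple ``model'' operator with two-point essential spectrum as a compression of $A$. The second step, which absorbs all the $t$-dependence, realizes the family $D(t)$ as a continuous compression of that model by an explicit square-root formula, in the spirit of the proof of Proposition~\ref{P8}.

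Concretely, using $[a,b]\subset\Int_{\mathbb R}\conv\sigma_e(A)$ I would first fix $\delta>0$ with $[a-\delta,b+\delta]\subset\conv\sigma_e(A)$ and set $a_1=a-\delta/3$, $b_1=b+\delta/3$, $a''=a-2\delta/3$, $b''=b+2\delta/3$, so that $a''<a_1<b_1<b''$, $[a'',b'']\subset\Int_{\mathbb R}\conv\sigma_e(A)$, and $\sigma(D(t))\subset[a,b]\subset(a_1,b_1)$ for all $t$. Applying Lemma~\ref{selfd} to the constant family $t\mapsto A$, with $[a'',b'']$, $a_1$, $b_1$ in place of $[a,b]$, $a'$, $b'$, I obtain a separable Hilbert space $X=H_1\oplus H_2$ with $\dim H_1=\dim H_2=\infty$ and a single isometry $V_0\colon X\to H$ with $V_0^*AV_0=a_1I_{H_1}\oplus b_1I_{H_2}=:B$. (The existence of such a $V_0$ can also be read off directly from Lemma~\ref{codim}(b) by alternately choosing orthonormal vectors with $A$-diagonal entries $a_1$ and $b_1$ that are, in addition, orthogonal to the $A$-images of the previously chosen ones.)

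It then remains to realize $D(t)$ as a continuous compression of $B$. Fix isometries $J_i\colon Z\to H_i$, $i=1,2$ (which exist as $\dim H_i=\infty$), regard $H_i\subset X$, and for $t\in[0,1]$ set
\[
T_1(t)=\Bigl(\tfrac{b_1I-D(t)}{b_1-a_1}\Bigr)^{1/2},\qquad
T_2(t)=\Bigl(\tfrac{D(t)-a_1I}{b_1-a_1}\Bigr)^{1/2},\qquad
W_0(t)=J_1T_1(t)+J_2T_2(t)\colon Z\to X.
\]
Since $\sigma(D(t))\subset[a,b]\subset(a_1,b_1)$, the operators $\tfrac{b_1I-D(t)}{b_1-a_1}$ and $\tfrac{D(t)-a_1I}{b_1-a_1}$ are positive with spectra in a fixed compact subinterval of $(0,\infty)$, so by the continuous functional calculus $T_1,T_2$, and hence $W_0$, are norm-continuous on $[0,1]$. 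Using $J_i^*J_i=I_Z$, $J_1^*J_2=0$, and $BJ_1=a_1J_1$, $BJ_2=b_1J_2$, a one-line computation gives $W_0^*(t)W_0(t)=T_1^2(t)+T_2^2(t)=I$ and $W_0^*(t)BW_0(t)=a_1T_1^2(t)+b_1T_2^2(t)=D(t)$ for all $t$. Then $W(t):=V_0W_0(t)$ defines an isometry-valued $W\in C([0,1],L(Z,H))$ with $W^*(t)AW(t)=W_0^*(t)V_0^*AV_0W_0(t)=W_0^*(t)BW_0(t)=D(t)$, which is the desired conclusion.

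I do not expect a genuine obstacle: the stationary step is precisely Lemma~\ref{selfd} applied to a constant family, and the $t$-dependence is controlled entirely by the norm-continuity of the operator square root applied to the family $D(t)$, whose spectra lie in a fixed compact subinterval of $(a_1,b_1)$. The only points that require some care are the bookkeeping of the four constants $a_1,b_1,a'',b''$, ensuring simultaneously that $[a'',b'']\subset\Int_{\mathbb R}\conv\sigma_e(A)$ and $\sigma(D(t))\subset(a_1,b_1)$, and the routine verification that the two square-root blocks assemble into an isometry intertwining $B$ and $D(t)$.
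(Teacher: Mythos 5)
Your proposal is correct and follows essentially the same route as the paper: a stationary compression of $A$ onto a two-block model $a'I\oplus b'I$ (obtained via the inductive orthogonalization from Lemma~\ref{codim}, which is also how the paper proceeds), followed by the explicit square-root formula $W_0(t)=J_1\bigl(\tfrac{b'I-D(t)}{b'-a'}\bigr)^{1/2}+J_2\bigl(\tfrac{D(t)-a'I}{b'-a'}\bigr)^{1/2}$ realizing $D(t)$ as a continuous compression of the model. The only (harmless) deviation is that you enlarge the interval to $(a_1,b_1)\supset[a,b]$ so the square roots are taken of operators bounded away from zero, whereas the paper works with the endpoints $a,b$ directly and handles continuity of the square root as in Proposition~\ref{P8}.
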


\begin{proof}
Note first that there exist orthogonal infinite-dimensional subspaces $H_a, H_b\subset H$ such that
$$
P_{H_a\oplus H_b}A{\upharpoonright}_{H_a\oplus H_b} =
\begin{pmatrix}
aI&0\cr 0&bI
\end{pmatrix}.
$$
Indeed, using Lemma \ref{codim} we construct inductively orthonormal vectors $e_1,f_1,$ $e_2,f_2,e_3,\dots$ in $H$ such that for all $j\in\mathbb N$, $\langle Ae_j,e_j\rangle=a$, $\langle Af_j,f_j\rangle=b$,
$$
e_j\perp\{e_1,\dots,e_{j-1}, f_1,\dots,f_{j-1},Ae_1,\dots,Ae_{j-1},Af_1,\dots, Af_{j-1}\},
$$
and
$$
f_j\perp\{e_1,\dots,e_{j}, f_1,\dots,f_{j-1},Ae_1,\dots,Ae_{j},Af_1,\dots, Af_{j-1}\}.
$$
Set $H_a=\bigvee\{e_j: j\in\mathbb N\}$ and $H_b=\bigvee\{f_j: j\in\mathbb N\}$.
Then the compression $P_{H_a\oplus H_b}A {\upharpoonright}_{H_a\oplus H_b}$ has the required form.

Choose now isometries $W_a:Z\to H_a$ and $W_b:Z\to H_b,$ and for every $t \in [0,1]$
define
$W(t):Z\to H_a\oplus H_b$ by
\begin{align*}
W(t)x=\frac{W_a(b-D(t))^{1/2}x}{(b-a)^{1/2}}\oplus\frac{W_b(D(t)-a)^{1/2}x}{(b-a)^{1/2}}, \qquad x \in Z.
\end{align*}
Then for all $x\in Z$ and $t \in [0,1],$ we have
\begin{align*}
\|W(t)x\|^2=&\frac{\|(b-D(t))^{1/2}x\|^2}{b-a}+\frac{\|(D(t)-a)^{1/2}x\|^2}{b-a}\\
=&
\frac{\langle(b-D(t))x,x\rangle}{b-a}+\frac{\langle(D(t)-a)x,x\rangle}{b-a}\\
=&\frac{(b-a)\|x\|^2}{b-a}=\|x\|^2,
\end{align*}
i.e. $W(t)$ is an isometry.
Moreover, an argument similar to the one for $\Delta$ in the proof of Proposition \ref{P8}
shows that $W \in C([0,1], L(Z, H)).$

Finally, for every $x\in Z,$
\begin{align*}
\langle W^*(t)AW(t)x,x\rangle
=\langle AW(t)x,W(t)x\rangle
=&
\frac{a}{b-a}\langle(b-D(t))x,x\rangle\\
+& \frac{b}{b-a}\langle(D(t)-a)x,x\rangle=
\langle D(t)x,x\rangle,
\end{align*}
so that $W^*A W =D$ for all $t \in [0,1],$ as required.
\end{proof}

The next statement is a counterpart of Theorem \ref{T9_intro}
in the case of operator functions $A$ with selfadjoint values.

\begin{theorem}\label{t2self}
Let $A \in C([0,1], L(H))$ be such that  $A^*(t)=A(t), t \in [0,1],$ and $[a,b]\subset \Int_{\mathbb R} \conv\sigma_e(A(t)),$
for some $a, b \in \mathbb R, a<b,$ and all $t\in [0,1].$
Assume that $Z$ is a separable Hilbert space,
and $D\in C([0,1], L(Z))$ satisfies $D^*(t)=D(t)$ and $\sigma(D(t))\subset (a,b).$
Then there exists $S\in C([0,1], L(Z,H))$ such that $S$ is isometry-valued and
$$
S^*(t)A(t)S(t)=D(t), \qquad t \in [0,1].
$$
\end{theorem}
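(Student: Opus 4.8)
The plan is to reproduce, in simplified form, the factorisation of~\eqref{compress} behind the proof of Theorem~\ref{T9_intro}: in the selfadjoint setting it splits into just two stages, supplied by Lemma~\ref{selfd} and Lemma~\ref{selfdd}. First I would compress $A(t)$, continuously in $t$, onto a \emph{stationary} operator of the form $D_0=a'I\oplus b'I$; then I would compress this fixed $D_0$ onto $D(t)$; composing the two isometry-valued functions yields the desired $S$. No analogue of the Weyl--von Neumann/Pokrzywa step from Section~\ref{compr} is needed here, because $a'I\oplus b'I$ at once occurs as a compression of $A(t)$ and has exactly the essential spectrum required to dilate $D(t)$.

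The only genuine preparation is to make the hypotheses of the two lemmas hold \emph{uniformly} in $t$. For a selfadjoint $B$ the numbers $\min\sigma(B)=\inf_{\|x\|=1}\langle Bx,x\rangle$ and $\max\sigma(B)=\sup_{\|x\|=1}\langle Bx,x\rangle$ depend $1$-Lipschitzly on $B$ in operator norm, so $t\mapsto\min\sigma(D(t))$ and $t\mapsto\max\sigma(D(t))$ are continuous on $[0,1]$. Since $\sigma(D(t))\subset(a,b)$ for every $t$, compactness of $[0,1]$ gives (assuming $Z\ne\{0\}$, the other case being trivial)
\[
a<\alpha:=\min_{t\in[0,1]}\min\sigma(D(t))\le\beta:=\max_{t\in[0,1]}\max\sigma(D(t))<b ,
\]
and I would then fix reals $a',b'$ with $a<a'<\alpha\le\beta<b'<b$, so that $\sigma(D(t))\subset[\alpha,\beta]\subset(a',b')$ for all $t\in[0,1]$.

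Next I would apply Lemma~\ref{selfd} to $A$ and to $D_0:=a'I\oplus b'I\in L(X)$, where $X=X_1\oplus X_2$ with $\dim X_1=\dim X_2=\infty$, obtaining an isometry-valued $V\in C([0,1],L(X,H))$ with $V^*(t)A(t)V(t)=D_0$ for all $t$. Since $X_1$ and $X_2$ are infinite-dimensional, $\sigma_e(D_0)=\{a',b'\}$, hence $\conv\sigma_e(D_0)=[a',b']$ and $[\alpha,\beta]\subset(a',b')=\Int_{\mathbb R}\conv\sigma_e(D_0)$; combined with $\sigma(D(t))\subset[\alpha,\beta]$ this lets me apply Lemma~\ref{selfdd} with the fixed operator $D_0$ in the role of $A$ and $[\alpha,\beta]$ in the role of $[a,b]$, producing an isometry-valued $W\in C([0,1],L(Z,X))$ with $W^*(t)D_0W(t)=D(t)$. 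Setting $S(t):=V(t)W(t)$ gives $S\in C([0,1],L(Z,H))$ (a composition of continuous operator-valued functions), $S(t)$ is an isometry as a composition of isometries, and
\[
S^*(t)A(t)S(t)=W^*(t)V^*(t)A(t)V(t)W(t)=W^*(t)D_0W(t)=D(t),\qquad t\in[0,1],
\]
which is the assertion. I do not expect a serious obstacle: once Lemmas~\ref{selfd} and~\ref{selfdd} are available, the only point demanding care is the uniform separation of $\sigma(D(t))$ from the endpoints $a$ and $b$ performed above, which is precisely what makes the two lemmas simultaneously applicable on all of $[0,1]$.
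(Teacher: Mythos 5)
Your proposal is correct and follows essentially the same route as the paper: the paper's proof of Theorem~\ref{t2self} likewise picks $a<a'<b'<b$ with $\sigma(D(t))\subset[a',b']$, applies Lemma~\ref{selfd} to compress $A(t)$ onto the stationary $D_{a',b'}=a'I\oplus b'I$, applies Lemma~\ref{selfdd} to compress $D_{a',b'}$ onto $D(t)$, and sets $S=VW$. Your compactness argument making $\sigma(D(t))$ uniformly separated from $a$ and $b$ (and strictly inside $(a',b')$) is exactly the point the paper leaves implicit, so your write-up is, if anything, slightly more careful in matching the hypotheses of Lemma~\ref{selfdd}.
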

\begin{proof}
Choose $a<a'<b'<b$ such that $\sigma (D(t)) \subset [a', b']$ for all $t \in [0,1].$
Let $X_1,X_2$ be infinite-dimensional Hilbert spaces, $X=X_1\oplus X_2$ and
$D_{a',b'}=
\begin{pmatrix}a'I&0\cr0&b'I
\end{pmatrix}.$
Using Lemmas \ref{selfd} and \ref{selfdd}, one
obtains isometry-valued functions $V \in C([0,1], L(X,H))$ and $W \in C([0,1], L(Z, X))$ satisfying
\begin{equation*}\label{selfeq}
V^*(t)A(t)V(t)=D_{a', b'} \qquad \text{and} \qquad  W^*(t) D_{a', b'} W(t)=D(t), \quad t \in [0,1].
\end{equation*}
Now it suffices to set $S(t)=V(t)W(t), t \in [0,1].$
\end{proof}

We finish this section with the following variant of Theorem \ref{diagonal_intro}
for selfadjoint $A,$ where again spectral terms makes the formulation
more transparent.
\begin{theorem}\label{diagonalself}
Let $A \in C([0,1], L(H))$ have selfadjoint values, and $d_n\in C([0,1]), n\in\mathbb N.$
Assume that there are $a, b \in \mathbb R$ such that
\[
d_n(t)\in [a, b] \subset \Int_{\mathbb R} \conv \sigma_e(A(t)), \qquad n\in \mathbb N, \quad t\in [0,1].
\]
Then there exist  $(v_n)_{n=1}^{\infty} \subset C([0,1], H)$ such that
$(v_n(t))_{n=1}^\infty$ is an orthonormal basis in $H$ for every $t\in[0,1],$
and
$$
\langle A(t)v_n(t),v_n(t)\rangle= d_n(t), \qquad n \in \mathbb N, \quad t \in [0,1].
$$
Moreover, the sequence $(v_n)_{n=1}^\infty$ is equicontinuous if the sequence $(d_n)_{n=1}^\infty$ is so.
\end{theorem}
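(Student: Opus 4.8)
The plan is to run the proof of Theorem \ref{diagonal_intro} essentially verbatim, replacing throughout $\Int W_e(A(t))$ by $\Int_{\mathbb R}\conv\sigma_e(A(t))=\Int_{\mathbb R}W_e(A(t))$ and each auxiliary statement of Sections \ref{compr} and \ref{diag_fun} by its selfadjoint counterpart. First I would assemble the inputs. Lemma \ref{numrangel} carries over word for word, since its proof uses only Lemma \ref{codim}, (a); and because a compression of a selfadjoint operator is selfadjoint, the mutually orthogonal subspaces $M_k$ it produces satisfy $\conv\sigma_e(P_{M_k}A(t){\upharpoonright}_{M_k})=\conv\sigma_e(A(t))$, so $[a,b]\subset\Int_{\mathbb R}\conv\sigma_e(P_{M_k}A(t){\upharpoonright}_{M_k})$ for all $k$ and $t$. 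Iterating the selfadjoint versions of Lemma \ref{L2} and Proposition \ref{P4} recorded in Section \ref{self} (exactly as in the proof of Theorem \ref{T5}) yields a selfadjoint Theorem \ref{T5}, with the modulus of continuity of $V$ controlled by $\theta$, $\delta_A$ and $\delta_D$; and the proofs of Proposition \ref{6.1} and Theorem \ref{T12} go through once $\Int W_e$ is replaced by $\Int_{\mathbb R}W_e$, the point being that $A(t)-d(t)I$ is again selfadjoint, so the correction term $\widetilde d(t)=a^{-1}(a-1)\langle A(t)v(t),v(t)\rangle$ is automatically real, and the bound $|\widetilde d(t)|<\theta$ keeps it in the relative interior.

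Next I would fix a uniform spectral gap: since $t\mapsto\conv\sigma_e(A(t))$ is continuous for the Hausdorff metric on compact subsets of $\mathbb R$ (Section \ref{prelim}) and $[a,b]\subset\Int_{\mathbb R}\conv\sigma_e(A(t))$ for every $t$ in the compact interval $[0,1]$, a standard covering argument produces $\theta>0$ with $[a-\theta,b+\theta]\subset\conv\sigma_e(A(t))$ for all $t$; in particular $\dist\bigl(d_n(t),\mathbb R\setminus\conv\sigma_e(A(t))\bigr)\ge\theta$ for all $n$ and $t$, which plays the role of the parameter $\theta$ in the proof of Theorem \ref{diagonal_intro}. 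From here the argument is formally identical. After the routine normalization $\|A(t)\|\le 1$ I would introduce the decomposition $n=2^{r(n)-1}(2s(n)-1)$, the weights $\eta_k=\eta\,2^{-r(k)/2}s(k)^{-1/2}$ with $\eta\in(0,\sqrt\theta/2\sqrt2)$, a dense sequence $(y_m)_{m=1}^\infty$ of unit vectors with $\bigvee_m y_m=H$, and construct $v_k\in C([0,1],S_H)$ inductively so that the analogues of conditions (1)--(4) hold: the step uses the selfadjoint Theorem \ref{T5} to produce a first approximant $u_k$ with $\langle A(t)u_k(t),u_k(t)\rangle=d_k(t)$ inside a suitably chosen $\widetilde M_k\subset M_k$, followed by two applications of the selfadjoint Theorem \ref{T12}, one to hit the corrected real target $\widetilde d_k(t)$ and one to absorb the component $\eta_k y_k(t)/b_k(t)$ and recover the exact diagonal $d_k(t)$, precisely as before. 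The closing estimate $\dist^2\bigl(y_m,\bigvee_k v_k(t)\bigr)\le\prod_i\bigl(1-\tfrac14\eta^2_{2^{m-1}(2i-1)}\bigr)=0$ then shows that $(v_k(t))_{k=1}^\infty$ is an orthonormal basis of $H$ for each $t$, and the equicontinuity claim follows, as in Theorem \ref{diagonal_intro}, from the common modulus of continuity of the $u_k$ supplied by the selfadjoint Theorem \ref{T5} together with $\sup_t\|v_k(t)-u_k(t)\|\le 27\sqrt2\,\eta_k\theta^{-1/2}\to 0$.

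The genuine work, hence the main obstacle, lies not in the combinatorics of the induction, which is a transcription of the proof of Theorem \ref{diagonal_intro}, but in verifying that the selfadjoint analogues of Theorem \ref{T5}, Proposition \ref{6.1} and Theorem \ref{T12} really do hold with the needed quantitative control; the delicate point is to check that every intermediate target (the $\widetilde d$ in the selfadjoint Proposition \ref{6.1}, the $\widetilde d_k$ and the diagonal of $u_k^{(2)}$ in the induction) stays real and remains inside a closed sub-interval strictly contained in $(a-\theta,b+\theta)$, so that the selfadjoint Lemma \ref{L2} and the Bourin-type compression statement remain applicable at each stage. Once this is in place, no new idea beyond Section \ref{self} is required, and one could equally well state the selfadjoint forms of Theorem \ref{T5}, Proposition \ref{6.1} and Theorem \ref{T12} as separate lemmas and then repeat the proof of Theorem \ref{diagonal_intro} line by line.
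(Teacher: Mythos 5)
Your proposal is correct and follows exactly the route the paper intends: the paper's own proof of Theorem \ref{diagonalself} consists precisely of the remark that one repeats the proof of Theorem \ref{diagonal_intro} with the selfadjoint analogues of the auxiliary statements from Section \ref{self}, using $\Int_{\mathbb R}\conv\sigma_e$ in place of $\Int W_e$. Your additional checks (the uniform gap $\theta$ obtained from compactness and the Hausdorff continuity of $t\mapsto W_e(A(t))$, and the reality of the intermediate targets $\widetilde d$ and $\widetilde d_k$) are exactly the points that make the transcription legitimate.
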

The proof of this result is completely analogous to the proof of Theorem \ref{diagonal_intro},
with substantial simplifications caused by the selfadjointness of $A$.

\section{Acknowledgments}

We are grateful to the referee for a careful reading of the manuscript
and for many helpful suggestions and remarks, which have led to several improvements in the paper.

\end{document}